\definecolor{pku}{RGB}{139,0,18}
\let\pa=\partial
\let\b=\beta
\let\d=\delta
\let\lam=\lambda
\let\r=\rho
\let\f=\frac
\let\tl=\tilde
\let\p=\psi
\let\om=\omega
\let\G= \Gamma
\let\D=\Delta
\let\Lam=\Lambda
\let\Om=\Omega
\let\e=\varepsilon
\let\pa=\partial
\let\ri=\rightarrow
\let\na=\nabla
\def\di{\mathrm{div}\,}
\newcommand{\beq}{\begin{equation}}
\newcommand{\eeq}{\end{equation}}
\newcommand{\beqo}{\begin{equation*}}
\newcommand{\eeqo}{\end{equation*}}
\newcommand{\ben}{\begin{eqnarray}}
\newcommand{\een}{\end{eqnarray}}
\newcommand{\beno}{\begin{eqnarray*}}
\newcommand{\eeno}{\end{eqnarray*}}
\newtheorem{lem}{Lemma}[section]
\newtheorem{theorem}{Theorem}[section]
\newtheorem{lemma}[theorem]{Lemma}
\newtheorem{proposition}[theorem]{Proposition}
\theoremstyle{remark}
\newtheorem{step}{Step}
\newtheorem{case}{Case}
\newtheorem{rmk}{Remark}[section]
\newcommand{\mm}{\mathbf{m}}
\newcommand{\BR}{\mathbb{R}}
\newcommand{\BQ}{\mathcal{Q}}
\newcommand{\tf}{\tilde{F}}
\newcommand{\pq}{\partial\BQ_{phy}}
\begin{document}

\title[Tensor-valued Variational Obstacle Problem]{Regularity of Minimizers of a Tensor-valued Variational Obstacle Problem in Three Dimensions}

\author{Zhiyuan Geng}
\address{Courant Institute, New York University\\
251 Mercer Street, New York, NY 10012, USA}
\email{zhiyuan@cims.nyu.edu}

\author{Jiajun Tong}
\address{University of California, Los Angeles\\
Box 951555, Los Angeles, CA 90095, USA}
\email{jiajun@math.ucla.edu}

\date{\today}

\begin{abstract}
Motivated by Ball and Majumdar's modification of Landau-de Gennes model for nematic liquid crystals, we study energy-minimizer $Q$ of a tensor-valued variational obstacle problem in a bounded 3-D domain with prescribed boundary data. The energy functional is designed to blow up as $Q$ approaches the obstacle. Under certain assumptions, especially on blow-up profile of the singular bulk potential, we prove higher interior regularity of $Q$, and show that the contact set of $Q$ is either empty, or small with characterization of its Hausdorff dimension.
We also prove boundary partial regularity of the energy-minimizer.
\end{abstract}

\maketitle

\section{Introduction}
\label{section: intro}

\subsection{Background and problem formulation}
In this paper, we consider a tensor-valued variational obstacle problem originates from the Landau-de Gennes model for nematic liquid crystals. We shall study the regularity of the minimizer and estimate the size of the contact set.
\par
In the Landau-de Gennes theory \cite{DP,MN}, local state of nematic liquid crystals at spatial point $x\in\mathbb{R}^3$ is characterized by a $3\times 3$-tensor-valued order parameter $Q(x)$ in
\beqo
\BQ=\{Q\in \mathbb{R}^{3\times 3}:\,Q_{ij}=Q_{ji},\, Q_{ii}=0\},
\eeqo
interpreted as traceless second moment of (formal) probability distribution function $f$ of local molecular orientation over $\mathbb{S}^2$, i.e.,
\begin{equation*}
Q(x)=\int_{\mathbb{S}^2}\left(\mm\otimes\mm-\frac{1}{3}I\right)f(x,\mm)\,d\mm.
\end{equation*}
Here $I$ denotes the $3\times 3$-identity matrix.
It follows that all eigenvalues of $Q$ should belong to $[-\f{1}{3},\f{2}{3}]$.
If one of the eigenvalues is equal to $-1/3$ or $2/3$, formally the density $f(x,\mm)\,d\mm$ is concentrated on a measure-zero subset of $\mathbb{S}^2$, in which case the bulk energy density will be infinity \cite{BM}.
We introduce the set of admissible configuration of nematic liquid crystal
\begin{equation}\label{constraint}
Q\in \BQ_{phy}:=\left\{Q\in\BQ:\,\lam_1(Q), \lam_2(Q), \lam_3(Q)\in \left[-\f{1}{3},\f{2}{3}\right]\right\},
\end{equation}
where $\lambda_i(Q)$ $(i=1,2,3)$ are three eigenvalues of $Q$.
$\BQ_{phy}$ is a bounded close subset of $\BQ$ with
\ben
\label{boundary of Q}\pa\BQ_{phy}&=&\left\{Q\in \BQ:\, \lam_i(Q)\in \left[-\f13,\f23\right] \text{ for } i=1,2,3;\text{ at least one of } \lam_i(Q)=-\f13 \right\}.\\
\label{interior of Q} \BQ^{\mathrm{o}}_{phy} &=&\left\{ Q\in \BQ:\, \lam_i(Q)\in \left(-\f13,\f23\right) \text{ for } i=1,2,3 \right\}
\een

Here we used the fact that $\sum Q_{ii} = 0$.
For more background on Q-tensor models, readers are referred to \cite{DP} and \cite{MN}.

Motivated by Ball-Majumdar model of nematic liquid crystals \cite{BM,KKLS},
we introduce
\beq\label{energy}
E[Q]=\int_{\Om}f_e(\nabla Q(x))+f_b(Q(x))\,dx.
\eeq
Here $\Om$ is a bounded smooth domain in 3-D, $f_e(\nabla Q)$ represents elastic energy density and $f_b(Q)$ is a singular bulk energy potential.

We assume
\beq\label{elastic}
f_{e}(\nabla Q)=\f12|\nabla Q|^2+\f{A}{2}|\di Q|^2 = \f12 Q_{ij,k} Q_{ij,k} + \f{A}{2}Q_{ij,j}Q_{ik,k}.
\eeq
Here
$Q_{ij,k}$ is short for $\pa_{x_k} Q_{ij}$, and summation is taken over repeated indices.

In fact, \eqref{elastic} is reduced from a more general form of elastic energy
\beq\label{general elastic}
f_e(\nabla Q)=\f{L_1}{2}|\nabla Q|^2+\f{L_2}{2} |\di Q|^2+\f{L_3}{2}Q_{ij,k}Q_{ik,j},
\eeq
where $L_1$, $L_2$, and $L_3$ are elastic constants,
satisfying that \cite{LMT}
\beq\label{relation}
L_1+L_3>0,\quad 2L_1-L_3>0,\quad L_1+\f53 L_2+\f16 L_3>0.
\eeq
Under these conditions, $f_e$ is coercive, i.e., there exist positive $c<C$ such that
\beqo 
c|\nabla Q|^2\leq f_e(\nabla Q)\leq C|\nabla Q|^2.
\eeqo
It is well-known that with proper coefficients, the last two terms in \eqref{general elastic} can be combined into a null Lagrangian (see e.g., \cite[Lemma 1.1]{Zoc}), which can be omitted under strong anchoring (Dirichlet) boundary conditions (see below).
Hence, without loss of generality, we may simplify \eqref{general elastic} into \eqref{elastic} by setting $L_3 = 0$ and $L_1=1$.
Thanks to \eqref{relation}, when $A>-3/5$,  $f_e$ in \eqref{elastic} is coercive and strictly convex in $\nabla Q$.

The following assumptions on $f_b$ will be made throughout this work.
\begin{enumerate}[($i$)]
  \item \label{assumption for f_b: smooth and convex}In $\BQ^\mathrm{o}_{phy}$, $1\leq f_b(Q)<+\infty$, and it is convex and smooth.
      The lower bound of $f_b$ is not an essential constraint, since the constant $1$ is irrelevant in minimizing \eqref{energy} when $\Om$ is fixed.
  \item \label{assumption for f_b: infinity}$f_b(Q)=\infty$ in $\mathcal{Q}\backslash\BQ^{\mathrm{o}}_{phy}$, and $f_b(Q)\ri +\infty$ as $Q\ri\pa\BQ_{phy}$ \cite{BM}.

\end{enumerate}
More hypotheses on $f_b$ will be proposed later.

To this end, we shall minimize \eqref{energy} over the set of admissible configurations given by
\beq\label{admissible}
\mathcal{A}=\{Q(x)\in H^1(\Om,\BQ_{phy}):\,Q=Q_0 \text{ on }\pa \Om\}
\eeq
with some $Q_0\in C^\infty(\overline{\Om},\BQ^{\mathrm{o}}_{phy})$.
Here the Dirichlet boundary condition is understood in the trace sense.
We take $Q_0$ to be smooth for simplicity, which may be weaken.
It is also noteworthy that $Q_0$ is separated from the obstacle $\pa \BQ_{phy}$ throughout $\overline{\Om}$.
Since $Q_0\in \mathcal{A}$ with $E(Q_0)<+\infty$ and $\mathcal{A}$ is closed, one can easily prove existence of energy minimizer of \eqref{energy} in $\mathcal{A}$ by the direct method of calculus of variations.
Moreover, the minimizer is unique thanks to the convexity of $f_e$ and $f_b$.
We still denote it as $Q$ with abuse of notation.

In spite of existence and uniqueness, it is not clear if $Q$ will touch the obstacle $\pa\BQ_{phy}$ in $\Om$.
For this purpose, we introduce \emph{contact set} $\mathcal{C}$ of $Q$,
\beqo 
\mathcal{C}:=\{x\in \Omega: \, Q(x)\in\pq\}.
\eeqo
Since $E[Q]<+\infty$, by the assumption \eqref{assumption for f_b: infinity} on $f_b$, $\mathcal{C}$ has Lebesgue measure zero.
However, one would naturally expect that $\mathcal{C} = \varnothing$ ---
this will make more sense as we establish regularity of $Q$ below since right now $Q$ is only an $H^1$-function but not well-defined pointwise.
In this work, we shall propose a number of conditions to pursue emptiness of $\mathcal{C}$.
In the cases where $\mathcal{C}=\varnothing$ cannot be guaranteed, we provide estimates for Hausdorff dimension of $\mathcal{C}$.

Let us review some related works before stating our main results. As is mentioned earlier, the model we are studying originates from the work by Ball and Majumdar \cite{BM}, where the authors proposed a thermotropic bulk potential that blows up as the eigenvalue of $Q$ approaches $-\f13$. They also proved in the one-elastic-constant case (i.e., $A = 0$) that, under weak assumptions on $f_b$ (see \cite[Theorems 4 and 6, Corollaries 3 and 4]{BP}), the contact set is empty and $Q$ is smooth in the domain. Bauman and Phillips \cite{BP} studied the 2-D case of this problem with a Landau-de Gennes elastic energy density containing more elasticity terms.
Using a hole-filling technique, they proved H\"{o}lder continuity of the energy minimizer $Q$ in general, and showed that if some of the elastic coefficients are zero, one should find $Q$ is smooth.
Evans, Kneuss and
Tran \cite{Evans2016partial} established a partial regularity result for a general form of elastic energy, stating that $Q$ is smooth outside a zero-measure closed subset of $\Om$.
They also proved that under certain hypotheses on $f_b$, the singular set (contact set) has Hausdorff dimension at most $(n-2)$ in the $n$-dimensional case. Readers are referred to \cite{feireisl2014evolution, feireisl2015nonisothermal,wilkinson2015strictly, iyer2015dynamic} and references therein for results on dynamic problems with singular bulk potential modeling liquid crystal flows.

\subsection{Main results}

Our main strategy in this work is to derive the emptiness of $\mathcal{C}$ from its regularity.
We start with a key result concerning interior regularity of $Q$.
\begin{proposition}[Interior regularity of $Q$]
\label{prop: higher interior regularity}
Assume $A>-3/5$ and $f_b$ is convex.
Let $Q$ denote the unique minimizer of $E[Q]$ in $\mathcal{A}$.
Then
\begin{enumerate}
  \item $Q\in H^2_{loc}(\Om)$.
  \item $\nabla Q\in L^q_{loc}(\Om)$, for $q$ such that
  \beq\label{eqn: condition on q}
  1\leq q <6p(A),
  \eeq
  where
  \beq\label{eqn: def of p(A)}
  p(A) =
  \begin{cases}
  1+\f{3}{A}+\f{9}{5A^2},&\mbox{ if }A\in \left(-\f35,\sqrt{\f{18}{5}}\right],\\
  1+\f{3+5A}{2\sqrt{10}A- 6},&\mbox{ if }A\in \left(\sqrt{\f{18}{5}},\f35+\sqrt{\f{18}{5}}\right],\\
  1+\f{3+\sqrt{9+6A}}{2A},&\mbox{ if }A>\f35+\sqrt{\f{18}{5}}.
  \end{cases}
  \eeq
  In particular, $p(A)$ is decreasing in $A$ when $A>0$, and $p(A)>1$.
\end{enumerate}
\end{proposition}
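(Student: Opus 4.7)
A preliminary observation used throughout: $\BQ_{phy}$ is itself convex. By the min--max characterization of eigenvalues, $Q\in\BQ_{phy}$ is equivalent to $\langle Qv,v\rangle\in[-1/3,2/3]$ for every unit $v$, a condition stable under convex combinations. Paired with convexity of $f_e$ in $\na Q$ and of $f_b$ on $\BQ^{\mathrm{o}}_{phy}$ (extended as $+\infty$ outside), this makes $E$ convex on $\mathcal{A}$.

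\emph{Part 1 ($H^2_{loc}$).} The plan is a difference-quotient argument adapted to the obstacle. The cleanest route is to regularize: replace $f_b$ by a smooth convex $f_b^\e$ defined on all of $\BQ$ with $f_b^\e\uparrow f_b$, and let $Q^\e$ be its (unique) minimizer over $\mathcal{A}$. Then $Q^\e$ satisfies the classical Euler--Lagrange equation with $\pa_Q f_b^\e(Q^\e)\in L^\infty_{loc}$, so one applies $D_h^s$ to the equation and tests against $\phi^2 D_h^s Q^\e$ for $\phi\in C_c^\infty(\Om)$. The resulting bulk contribution $\int\phi^2 D_h^s(\pa_Q f_b^\e(Q^\e))\cdot D_h^s Q^\e\,dx\ge 0$ by convexity of $f_b^\e$ and may be discarded; coercivity of the elastic form (since $A>-3/5$) then yields $\|D_h^s\na Q^\e\|_{L^2(K)}\le C(K,K')\|\na Q^\e\|_{L^2(K')}$ uniformly in $\e,h$. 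Passing to the limit via $Q^\e\to Q$ in $H^1$ and lower semicontinuity gives $Q\in H^2_{loc}(\Om)$. An equivalent direct route avoids regularization by using the convex-combination competitor
\beqo
\tilde Q(x) := (1-\phi^2(x))Q(x) + \tfrac{\phi^2(x)}{2}\bigl(Q(x+he_s)+Q(x-he_s)\bigr),
\eeqo
which is pointwise in $\BQ_{phy}$ by convexity and admissible; the bulk contribution to $E[\tilde Q]-E[Q]$ is controlled by $Ch^2\|\phi\|_{C^2}^2\int f_b(Q)\,dx$ using convexity of $f_b$ and change of variables (crucially, only integrable translations of $\phi^2 f_b(Q)$ appear, so no pointwise control on the singular $f_b(Q)$ is required).

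\emph{Part 2 ($L^q_{loc}$ for $\na Q$).} With $Q\in H^2_{loc}$, the Euler--Lagrange equation
\beqo
\Delta Q_{ij}+\tfrac{A}{2}\bigl(\pa_j\pa_k Q_{ik}+\pa_i\pa_k Q_{jk}\bigr) = \pa_{Q_{ij}}f_b(Q)-\lambda\delta_{ij}
\eeqo
(with Lagrange multiplier $\lambda$ for tracelessness) holds a.e.\ and may be differentiated in $x_l$. Testing the differentiated equation against $\phi^2|\na Q|^{2p-2}\pa_l Q_{ij}$ for $\phi\in C_c^\infty(\Om)$, the bulk contribution equals $\int\phi^2|\na Q|^{2p-2}\pa^2_{QQ}f_b(Q)(\na Q,\na Q)\,dx\ge 0$ by convexity of $f_b$ and may be discarded; the Lagrange multiplier term vanishes since $Q$ is traceless. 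Integration by parts on the elastic side, with Young's inequality absorbing cut-off derivatives into $\int|\na\phi|^2|\na Q|^{2p}\,dx$, yields a weighted quadratic energy of schematic form
\beqo
\int\phi^2|\na Q|^{2p-2}\Bigl[|\na^2 Q|^2+2(p-1)\bigl|\na|\na Q|\bigr|^2+A\cdot(\text{mixed-divergence cross terms})\Bigr]\,dx \le C\int|\na\phi|^2|\na Q|^{2p}\,dx.
\eeqo
Pointwise nonnegativity of the bracket reduces to an algebraic inequality on $3\times3\times3$ tensors whose coefficients depend on $A$ and $p$; the largest $p$ for which it holds strictly is $p(A)$. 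The estimate then shows $|\na Q|^p\in H^1_{loc}(\Om)$ for every $p<p(A)$, and Sobolev embedding $H^1\hookrightarrow L^6$ in three dimensions yields $\na Q\in L^{6p}_{loc}$.

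\emph{Main obstacle.} The hardest step is the pointwise algebraic optimization producing the piecewise formula for $p(A)$. The $A$-weighted mixed-divergence terms must be balanced against $|\na^2 Q|^2$ and the auxiliary term $(p-1)\bigl|\na|\na Q|\bigr|^2$ by an appropriate Cauchy--Schwarz splitting; the thresholds $\sqrt{18/5}$ and $3/5+\sqrt{18/5}$ correspond to regimes in which the optimal splitting structure changes, which is what forces the three-case definition of $p(A)$. A subsidiary technical care in Part 1 is avoiding any pointwise control on the singular $f_b(Q)$ or $\pa_Q f_b(Q)$, which both the regularization and the convex-combination competitor naturally accomplish.
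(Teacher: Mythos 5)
Your Part 1 is essentially the paper's argument: the competitor $Q+\e u_k$ with $u_k = D_k^{-h}(\xi^2 D_k^h Q)$ and $0<\e\ll h^2$ lies in $\BQ_{phy}$ pointwise because $\BQ_{phy}$ is convex, and convexity of $f_b$ plus a telescoping change of variables makes the bulk contribution to $E[Q+\e u_k]-E[Q]$ nonpositive before one sends $\e\to 0^+$; coercivity of $f_e$ on the remaining elastic variational inequality gives $H^2_{loc}$. Your symmetric second-difference competitor, or your $f_b^\e$-regularization, are admissible variants of the same idea.

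For Part 2 your plan diverges from the paper, and it has a gap. You propose to write the Euler--Lagrange equation, differentiate in $x_l$, test against $\phi^2|\na Q|^{2p-2}\pa_l Q_{ij}$, and discard $\int\phi^2|\na Q|^{2p-2}\,\pa^2_{QQ}f_b(Q)(\na Q,\na Q)\,dx\geq 0$ by convexity. But at this stage the contact set is not known to be empty (that is precisely what the later theorems aim to prove), and $d(Q(x))$ may be arbitrarily small on a dense set; one therefore cannot take the two-sided perturbation $Q+t\phi$ for a fixed $t\neq 0$ and stay in $\BQ_{phy}$, so the Euler--Lagrange equation with source $Df_b(Q)$ is simply not available. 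Worse, $\int_\Om f_b(Q)\,dx<\infty$ does not give $Df_b(Q)\in L^1_{loc}$, let alone integrability of the discarded $\pa^2_{QQ}f_b$ term (for $f_b\sim d^{-s}$ these scale like $d^{-s-1}$ and $d^{-s-2}$), so even formally the manipulation is unjustified. The paper avoids all of this by never passing through the E--L equation: it reuses the Part~1 competitor, now weighted by $|D^h_k Q|^{2p-2}$ (admissible for $0<\e\ll h^{2p}$ thanks to the local boundedness of $Q$ from Part~1), discards the bulk term at the level $\int f_b(Q+\e u_k)-f_b(Q)\,dx\leq 0$ by the same convexity/change-of-variables argument, and is left with a purely elastic variational inequality in which $\pa^2_{QQ}f_b$ never appears. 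The formula \eqref{eqn: def of p(A)} then comes from the pointwise matrix inequalities $\|M\|_2^2\leq\f23|M|^2$ and $|D^h_k\di Q|^2\leq\f53|D^h_k\na Q|^2$ for symmetric traceless tensors, a splitting parameter $\om\in[0,1]$ with $\f35\om+A\geq 0$, and an explicit optimization over $\om$ in Appendix~\ref{section: formula for p(A)}; your description of the three regimes is accurate in spirit but the derivation is not carried out. (The paper does later establish an estimate involving $\pa^2_{QQ}f_b$---Lemma~\ref{lemma: estimate for f_b}---but only through the $f_b^\e$ regularization, only after Proposition~\ref{prop: higher interior regularity} is already in hand, and only under the extra hypothesis~\eqref{assumption for f_b: gradient bound}; it is not used for the proposition itself.)
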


\begin{rmk}
  The first part of the Proposition is a special case of Theorem 4.1 in \cite{Evans2016partial}, while the second part is stronger than the $W^{1,6}$-regularity that directly follows from the first part by 3-D Sobolev embedding.
\end{rmk}

Proposition \ref{prop: higher interior regularity} shows that the minimizer $Q(x)$ can be realized as a H\"{o}lder continuous function in $\Omega$ and it is thus well-defined \emph{pointwise}.
Therefore, the contact set $\mathcal{C}$ is now well-defined.

As is mentioned above, the energy-minimizer in
the special case $A = 0$ has been studied in \cite{BM,BP}, where $f_e(\na Q)$ reduces to the Dirichlet energy and the uniform separation from the obstacle simply follows from the maximum principle. Then smoothness of $Q$ can be justified by elliptic regularity theory \cite{DG} applied to the Euler-Lagrange equation.
When $A\neq 0$, this technique fails.
Nevertheless, it is tempting to believe when $A$ is close to $0$, the minimizer should behave like the one for $A = 0$.
This leads to the following theorem with a perturbative nature.

\begin{theorem}[Small $A$ case]\label{thm: small A results}
Assume $f_b$ satisfies the assumptions \eqref{assumption for f_b: smooth and convex} and \eqref{assumption for f_b: infinity}.
For any $V\subset\subset \Om$,
there exists $A_*>0$ depending on $V$, such that for any $|A|\leq A_*$, the minimizer $Q_A\in \mathcal{A}$ of \eqref{energy} with parameter $A$ satisfies $\mathcal{C}\cap V = \varnothing$.
Moreover, $Q_A$ is smooth in $V$.
\end{theorem}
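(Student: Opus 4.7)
\noindent\textit{Proof proposal.}
The strategy is perturbative around the one-constant case $A=0$. Let $Q^{(0)}$ denote the minimizer of $E$ in $\mathcal{A}$ with $A=0$. The argument mentioned just before the theorem (combining \cite{BM} with a maximum-principle argument applied to the Euler--Lagrange system $-\Delta Q^{(0)}+\pa_Q f_b(Q^{(0)})=0$, projected onto the symmetric traceless subspace) shows that $Q^{(0)}$ is smooth on $\overline{\Om}$ and uniformly bounded away from the obstacle: there exists $\delta_0>0$ such that $\dist(Q^{(0)}(x),\pq)\geq\delta_0$ for every $x\in\overline{\Om}$. The plan is to show $Q_A\ri Q^{(0)}$ in $C^0(\overline{V})$ as $A\ri 0$, from which the separation of $Q^{(0)}$ from the obstacle transfers to $Q_A$ on $V$ when $|A|$ is small enough; emptiness of $\mathcal{C}\cap V$ and smoothness of $Q_A|_V$ will then follow from standard elliptic regularity.

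I first establish strong $H^1(\Om)$-convergence $Q_A\ri Q^{(0)}$. Minimality gives $E_A[Q_A]\leq E_A[Q^{(0)}]=E_0[Q^{(0)}]+\f{A}{2}\int_\Om|\di Q^{(0)}|^2\,dx$, which is uniformly bounded as $A\ri 0$ by smoothness of $Q^{(0)}$. Coercivity of $f_e$ (valid for $A>-3/5$) and nonnegativity of $f_b$ then yield a uniform $H^1$-bound on $Q_A$. Along any weakly convergent subsequence with limit $Q^*$, lower semicontinuity combined with $\limsup_{A\ri 0} E_0[Q_A]\leq \limsup_{A\ri 0}(E_A[Q_A]+\f{|A|}{2}\|\di Q_A\|_{L^2}^2)\leq E_0[Q^{(0)}]$ and the uniqueness of the $A=0$ minimizer force $Q^*=Q^{(0)}$. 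Convergence of the energy norms then upgrades weak to strong $H^1$-convergence of the full family.

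Next, I promote $H^1$-convergence to uniform convergence on compact subsets. Fix $V\subset\subset V'\subset\subset\Om$. A re-inspection of the proof of the first part of Proposition~\ref{prop: higher interior regularity} (difference quotients interior to $\Om$ with the convexity of $f_b$ absorbing one term) shows that the resulting $H^2_{loc}$-estimate is uniform in $A$ whenever $|A|$ is bounded, yielding $\|Q_A\|_{H^2(V')}\leq C(V',V)$ independently of $A$. In 3-D, $H^2(V')$ embeds compactly into $C^0(\overline{V})$; combined with the $H^1$-convergence and uniqueness of the limit, this forces $\|Q_A-Q^{(0)}\|_{C^0(V)}\ri 0$.

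Choosing $A_*=A_*(V)$ so small that $\|Q_A-Q^{(0)}\|_{C^0(V)}<\delta_0/2$ whenever $|A|\leq A_*$, one obtains $Q_A(V)\subset\{Q\in\BQ_{phy}:\dist(Q,\pq)\geq\delta_0/2\}$, so $\mathcal{C}\cap V=\varnothing$. On $V$, $Q_A$ then takes values in a compact subset of $\BQ^{\mathrm{o}}_{phy}$ on which $f_b$ is smooth; the Euler--Lagrange system for $Q_A$ reduces to a uniformly elliptic second-order system with smooth nonlinearity (using $A>-3/5$), and a standard Schauder bootstrap on a shrinking family of subdomains delivers $Q_A\in C^\infty(V)$. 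The main technical obstacle is the $A$-uniformity of the $H^2_{loc}$-estimate in the third paragraph; everything else is routine.
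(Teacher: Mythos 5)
Your proposal is correct and follows essentially the same compactness route as the paper: a uniform-in-$A$ interior $H^2$-estimate (from Proposition~\ref{prop: higher interior regularity}) gives $C^0$-compactness, uniqueness of the $A=0$ minimizer identifies the limit, and the known emptiness of $\mathcal{C}$ at $A=0$ plus the Lipschitz continuity of $d(\cdot)$ transfers separation from the obstacle to $Q_A$ on compact subsets. The only cosmetic difference is that the paper phrases the argument as a contradiction along a sequence $A_i\ri 0$ whereas you argue directly for convergence of the full family and then invoke subsequence/uniqueness to make it unconditional; both are standard and equivalent.
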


For general $A>-\f35$, it is natural to believe that whether $\mathcal{C} =\varnothing$ should depend on the growth of $f_b$ near $\pa\BQ_{phy}$.
Suppose $\mathcal{C}\not = \varnothing$ and $x_0\in \mathcal{C}$.
If $f_b$ grows fast near $\pa\BQ_{phy}$, in a neighborhood of $x_0$, $Q(x)$ must leave $\pa\BQ_{phy}$ sufficiently fast in order to not incur huge bulk energy.
However, this may lead to large elastic energy, which prohibits $Q$ from being energy-minimizing.
In this spirit, we propose the following hypothesis on the growth of $f_b$ near $\pa \BQ_{phy}$:

\begin{enumerate}[($i$)]
\setcounter{enumi}{2}
\item \label{assumption for f_b: growth}
    We assume there exists some $s>0$ and $0<m_s<M_s$, such that in $\BQ_{phy}$,
    \beq\label{bulk}
    \f{m_s}{d(Q)^s}\leq f_b(Q)\leq \f{M_s}{d(Q)^s},
    \eeq
    or there exists $0<k_0\leq K_0$ and $m_0\leq M_0$, such that for any $Q\in \pa \mathcal{Q}_{phy}$, there exists $k(Q)\in [k_0,K_0]$ satisfying
    \beq
    k(Q)|\ln (d(\lam Q))|+m_0\leq  f_b(\lam Q)\leq k(Q)|\ln (d(\lam Q))|+M_0, \; \forall\, \lam\in(0,1).
    \label{eqn: log growth non-uniform}
    \eeq
    Here $d(Q) := \mathrm{dist}(Q,\BQ_{phy}^c)$ is the distance measured in the Frobenius norm.
\end{enumerate}
It can be shown that the Ball-Majumdar potential satisfies the assumption \eqref{eqn: log growth non-uniform}.
See \cite[(22)-(25)]{BM}.
Then we have
\begin{theorem}
\label{thm1}
Assume $A>-3/5$ and $f_b$ satisfies the assumptions \eqref{assumption for f_b: smooth and convex}, \eqref{assumption for f_b: infinity}, and \eqref{assumption for f_b: growth} with \eqref{bulk}. 
Let $Q$ be the unique minimizer of $E[Q]$ in $\mathcal{A}$.

Then
\begin{enumerate}
  \item \label{part: empty contact set} If $s>s(A)$, where
  \beq\label{eqn: condition on s}
s(A) :=\frac{2}{2p(A)-1},
\eeq
we have $\mathcal{C} = \varnothing$ and $Q$ is smooth in $\Om$. 
Here $p(A)$ is defined in \eqref{eqn: def of p(A)}.

\item \label{part: Hausdorff dimension of singular set}
Otherwise, if $0<s\leq s(A)$, 
$$
\dim_{H}\mathcal{C} \leq 3\left(1-\frac{s}{s(A)}\right).
$$
Here $\dim_H\mathcal{C}$ denotes the Hausdorff dimension of $\mathcal{C}$.
\end{enumerate}
\end{theorem}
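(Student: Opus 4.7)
The plan to prove Theorem~\ref{thm1} is to combine the local $W^{1,q}$-regularity of $Q$ for every $q<6p(A)$ supplied by Proposition~\ref{prop: higher interior regularity} with the quantitative singular lower bound $f_b(Q)\geq m_s d(Q)^{-s}$ from \eqref{bulk}. Since $6p(A)>3$, Morrey's embedding gives $Q\in C^{0,\alpha}_{\mathrm{loc}}(\Omega)$ for every $\alpha<(2p(A)-1)/(2p(A))$, so the contact set $\mathcal{C}$ is relatively closed in $\Omega$ and defined pointwise; hence the question of its size is meaningful.

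For part~(1), assume toward a contradiction that $x_0\in\mathcal{C}$. Because $Q(x_0)\in\partial\BQ_{phy}$ and $d(\cdot)$ is $1$-Lipschitz on $\BQ$, Hölder continuity of $Q$ gives $d(Q(x))\leq |Q(x)-Q(x_0)|\leq C|x-x_0|^{\alpha}$ on a neighborhood of $x_0$, whence
\[
\int_{B_r(x_0)} f_b(Q)\,dx \;\geq\; m_s\int_{B_r(x_0)} d(Q)^{-s}\,dx \;\geq\; c_0\, r^{\,3-s\alpha}.
\]
Finiteness of $\int_\Omega f_b(Q)\,dx$ then forces $3-s\alpha>0$, and sending $\alpha\uparrow (2p(A)-1)/(2p(A))$ rules out $\mathcal{C}\neq\varnothing$ once $s>3s(A)$. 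To close the gap to the sharp threshold $s(A)$, I would sharpen the above integral estimate by combining the Sobolev--Poincar\'e inequality on shrinking balls,
\[
\left(\frac{1}{|B_r|}\int_{B_r(x_0)}|Q-Q(x_0)|^{q^\ast}\,dx\right)^{\!1/q^\ast}\leq Cr\left(\frac{1}{|B_r|}\int_{B_r(x_0)}|\nabla Q|^{q}\,dx\right)^{\!1/q},\quad q\uparrow 6p(A),
\]
with Chebyshev's inequality for $d(Q)^{-s}\in L^1$. This pairing couples the sublevel measure $|\{d(Q)<\varepsilon\}\cap B_r(x_0)|$ to the elastic $L^{q}$-integrability of $\nabla Q$ and produces a decay rate in $\varepsilon$ strictly stronger than the crude $\varepsilon^s$ bound from Chebyshev alone; contrasting it with the upper bound enforced by a hypothetical $x_0\in\mathcal{C}$ supplies the contradiction at the optimal $s>s(A)$. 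Once $\mathcal{C}=\varnothing$ on some $V\Subset\Omega$, $d(Q)$ is uniformly positive on $V$ by continuity, so $f_b$ is smooth at values taken by $Q$; the Euler--Lagrange system $-\Delta Q-A\nabla(\mathrm{div}\,Q)+\mathrm{D}_Q f_b(Q)=0$ (projected onto the traceless symmetric subspace) is then a quasilinear elliptic system with smooth coefficients on $V$, and standard Schauder bootstrap yields $Q\in C^\infty(V)$.

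For part~(2), when $0<s\leq s(A)$ I would run a Vitali covering argument on $\mathcal{C}$: cover $\mathcal{C}$ by countably many balls $\{B_{r_i}(x_i)\}$ with $x_i\in\mathcal{C}$ and $r_i\leq\delta$ such that $\{B_{r_i/5}(x_i)\}$ are pairwise disjoint, and apply the refined local lower bound from the previous paragraph on each $B_{r_i/5}(x_i)$ to deduce $\sum_i r_i^{\,3(1-s/s(A))}\leq C\int_\Omega f_b(Q)\,dx<\infty$. Letting $\delta\to 0$ gives $\dim_H\mathcal{C}\leq 3(1-s/s(A))$, as asserted.

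The main obstacle throughout is producing the sharp exponent $3(1-s/s(A))$ rather than the coarse $3-s(2p(A)-1)/(2p(A))$ delivered by pointwise Hölder continuity alone; the discrepancy grows like $3p(A)$ in the coefficient of $s$. Closing it requires exploiting the $L^{6p(A)}$-integrability of $|\nabla Q|$ integrally via Sobolev--Poincar\'e, in the same self-improvement spirit that produced the $p(A)$ gain in Proposition~\ref{prop: higher interior regularity}; reconciling the two sources of regularity (elastic $L^q$ and bulk $L^1$) at the correct scale is the delicate point of the argument.
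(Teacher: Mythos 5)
Your opening moves are sound and match the paper's: Morrey embedding from Proposition~\ref{prop: higher interior regularity} makes $Q$ Hölder, hence $\mathcal{C}$ closed and pointwise meaningful, and you correctly compute that naive Hölder continuity together with $f_b(Q)\in L^1$ rules out contact only for $s>3s(A)$. You also correctly flag that the factor-of-$3$ gap is the real content. But the plan you give to close it does not contain the ingredient that actually closes it.

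The paper's proof is not a regularity-plus-Chebyshev argument at all; it is a \emph{variational comparison} argument. One builds a competitor $Q_a=\rho\,h_a(Q)+(1-\rho)Q$, where $h_a(Q)=\eta_a(d(Q))Q$ retracts $Q$ radially off the obstacle to depth $a$, and tests minimality: $\int(f_b(Q)-f_b(Q_a))\leq\int(f_e(\nabla Q_a)-f_e(\nabla Q))$. The left side is bounded below on the sublevel set $\Omega_{a/\Lambda_s}$ using \eqref{bulk}, the right side above using H\"older and $\nabla Q\in L^q_{\mathrm{loc}}$, yielding the \emph{iterative} inequality $|\Omega_{a/\Lambda_s}\cap U|\lesssim (a/\Lambda_s)^s|\Omega_a\cap V|^{1-2/q}$. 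Feeding the crude Chebyshev bound $|\Omega_a\cap V|\lesssim a^s$ into the right side, then repeating, bootstraps to $|\Omega_a\cap V|\leq Ca^{\beta}$ for all $\beta<sq/2$, which together with the Hölder inclusion $(\mathcal{C}\cap U)_r\subset\{d(Q)\leq Cr^\alpha\}$ gives the Vitali covering the exponent $3-3s/s(A)$; and for $s>s(A)$ the same iterative inequality run ``backwards'' from a lower volume bound $|\Omega_a\cap V|\geq Ca^{3/\alpha}$ forces a negative exponent after finitely many steps, a contradiction.

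Your proposal never constructs a competitor or invokes minimality, so it has no mechanism to produce this iterative inequality. ``Combining Sobolev--Poincar\'e with Chebyshev'' gives a one-shot estimate, not a self-improving one: Chebyshev yields $|\Omega_a\cap V|\lesssim a^s$, Hölder yields $(\mathcal{C}\cap U)_r\subset\Omega_{Cr^\alpha}$, and chaining them gives $\dim_H\mathcal{C}\leq 3-s\alpha$ with $\alpha<(2p(A)-1)/(2p(A))$ --- exactly the coarse bound you already computed, still off by the factor $3p(A)$. Your part~(2) covering step also presupposes a local lower bound $\int_{B_{r}}f_b\gtrsim r^{3s/s(A)}$ that the preceding paragraph did not establish. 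The missing idea is the energy comparison against a retracted competitor; without it, neither the sharp threshold in part~(1) nor the sharp dimension bound in part~(2) is reachable by the tools you list.
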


\begin{rmk}
We do not claim optimality of the borderline exponent $s(A)$, but we stress that $s(A)<2$ for all $A>-\f35$ since $p(A)>1$.
As a result, Theorem \ref{thm1}, under slightly stronger assumptions on the energy functional, significantly improves the result of \cite[Theorem 4.2]{Evans2016partial} in 3-D case, where it was shown that $\dim_H \mathcal{C}\leq 1$ in 3-D provided that $f_b$ grows like $d(Q)^{-2}$ near $\pa \BQ_{phy}$.
\end{rmk}

\begin{rmk}
It is noteworthy that $s(A)\ri 0$ as $A\ri 0$.
This echoes with the results that when $A = 0$, only very weak blow-up of $f_b$ near $\pa \BQ_{phy}$ is needed to achieve emptiness of $\mathcal{C}$\cite{BM,BP}.
\end{rmk}

In \cite{Evans2016partial}, inspired by a model of the Ball-Majumdar singular bulk potential, where $f_b$ grows like $|\ln d(Q)|$ near $\pa \BQ_{phy}$, an additional assumption is proposed on $f_b$.
It can be roughly stated as follows in our context: for some $c_0>0$,
\beq
\f{\pa^2 f_b}{\pa Q_{ij} \pa Q_{mn}}y_{ij}y_{mn}\geq c_0\left|\f{\pa f_b}{\pa Q}\cdot y\right|^2\quad\mbox{ for all }y\in T\BQ_{phy}.
\label{eqn: stronger assumptions by EKT log case}
\eeq
Based on this, the authors proved that $\dim_H \mathcal{C} \leq 1$ (see \cite[Theorem 4.3]{Evans2016partial}).
This assumption has been verified rigorously in \cite{feireisl2015nonisothermal} for the Ball-Majumdar potential \cite{BM}.
In what follows, we shall propose similar hypotheses on $f_b$ and improve our results in Theorem \ref{thm1}.
\begin{enumerate}[($i$)]
\setcounter{enumi}{3}
  \item \label{assumption for f_b: gradient bound} If $f_b$ satisfies the assumption \eqref{assumption for f_b: growth} with \eqref{bulk}, we assume that for some $C_s,c_s >0$,
      \beq
      |Df_b(Q)|^s\leq C_s f_b(Q)^{s+1}\quad\mbox{ for all }Q\in \BQ^{\mathrm{o}}_{phy},
      \label{eqn: gradient bound inverse power case}
      \eeq
      and
      \beq
      \f{\pa^2 f_b}{\pa Q_{ij} \pa Q_{mn}}y_{ij}y_{mn}\geq \f{c_s}{f_b}\left|\f{\pa f_b}{\pa Q}\cdot y\right|^2\quad\mbox{ for all }y\in T\BQ_{phy}.
      \label{eqn: stronger assumptions inverse power case}
      \eeq
      If, otherwise, $f_b$ satisfies the assumption \eqref{assumption for f_b: growth} with \eqref{bulk log}, we assume that for some $C_0 >0$,
      \beq
      |Df_b(Q)|\leq C_0 \exp(k_0^{-1}f_b(Q))\quad\mbox{ for all }Q\in \BQ^{\mathrm{o}}_{phy},
      \label{eqn: gradient bound log case}
      \eeq
      and also \eqref{eqn: stronger assumptions by EKT log case} holds.
\end{enumerate}

\begin{rmk}
Note that \eqref{eqn: gradient bound inverse power case} and \eqref{eqn: stronger assumptions inverse power case} are inspired by assuming that $f_b(Q)$ behaves like $d(Q)^{-s}$ near $\pa \BQ_{phy}$;
while \eqref{eqn: gradient bound log case} is derived by assuming $f_b(Q)$ behaves like $k_0|\ln d(Q)|$ near $\pa \BQ_{phy}$.
We omit their derivations.
\end{rmk}

With the new assumption \eqref{assumption for f_b: gradient bound}, we may improve the results in part \eqref{part: Hausdorff dimension of singular set} of Theorem \ref{thm1}.
\begin{theorem}\label{thm: improved dimension bound}
Assume $A>-3/5$.
Let $Q$ be the unique minimizer of $E[Q]$ in $\mathcal{A}$.
\begin{enumerate}
  \item \label{part: inverse power case}
  Suppose $f_b$ satisfies the assumptions \eqref{assumption for f_b: smooth and convex}, \eqref{assumption for f_b: infinity}, \eqref{assumption for f_b: growth} with \eqref{bulk} and \eqref{assumption for f_b: gradient bound}.
  When $0<s\leq s(A)$,
  \beq
  \dim_H\mathcal{C}\leq
  \begin{cases}
  3(1-\f{3s+2}{3s(A)+2}),&\mbox{ if }p(A)\leq 2,\\
  3-\f{s}{s(A)}-\f{2(2+s)}{2+s(A)},&\mbox{ if }p(A)> 2.
  \end{cases}
  \label{eqn: improved dimension bound inverse power case}
  \eeq

  \item \label{part: log case}
    Suppose $f_b$ satisfies the assumptions \eqref{assumption for f_b: smooth and convex}, \eqref{assumption for f_b: infinity}, \eqref{assumption for f_b: growth} with \eqref{eqn: log growth non-uniform} and \eqref{assumption for f_b: gradient bound}.
  Then
  \beq
  \dim_H\mathcal{C}\leq
  \begin{cases}
  3-\f{6p(A)-3}{6p(A)\cdot \f{K_0-k_0}{k_0}+2p(A)+2},&\mbox{ if }p(A)\leq 2,\\
  3-\f{2p(A)-1}{p(A)\left(\f{K_0-k_0}{k_0}p(A)+1\right)},&\mbox{ if }p(A)> 2.
  \end{cases}
  \label{eqn: improved dimension bound log case}
  \eeq
\end{enumerate}
\end{theorem}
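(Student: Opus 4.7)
The strategy is to sharpen the covering bound behind Theorem \ref{thm1} by using assumption (iv) to upgrade the local integrability of $f_b(Q)$, and then to recombine this improved control with the H\"older exponent delivered by Proposition \ref{prop: higher interior regularity}. The structural inputs \eqref{eqn: gradient bound inverse power case}--\eqref{eqn: stronger assumptions inverse power case} (in the power case) and \eqref{eqn: stronger assumptions by EKT log case}, \eqref{eqn: gradient bound log case} (in the logarithmic case) are chosen precisely so that a Bochner-type differential inequality for $f_b(Q)$ can be closed against the $A$-perturbation appearing in the Euler--Lagrange equation.

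The first step is to compute $\Delta f_b(Q)$ via the chain rule,
$$\Delta f_b(Q) = \f{\pa^2 f_b}{\pa Q_{ij}\pa Q_{mn}}\, Q_{ij,k}\,Q_{mn,k} + \f{\pa f_b}{\pa Q_{ij}}\,\Delta Q_{ij},$$
and substitute the Euler--Lagrange equation $\Delta Q + A\,\na(\di Q)=Df_b(Q)$. Invoking \eqref{eqn: stronger assumptions inverse power case}, one arrives at a pointwise inequality of the form
$$\Delta f_b(Q) \geq |Df_b(Q)|^2 + \f{c_s}{f_b(Q)}|\na f_b(Q)|^2 - (A\text{-cross term}).$$
Multiplying by a test function $\eta^2 f_b(Q)^\sigma$ for a cutoff $\eta$ and an exponent $\sigma\ge 0$ to be optimized, integrating by parts, and using \eqref{eqn: gradient bound inverse power case} with Young's inequality, the $|Df_b|^2$ term is promoted to $f_b^{2+2/s}$, while the $A$-cross term is absorbed using the $L^{6p(A)}_{loc}$-control on $\na Q$ from Proposition \ref{prop: higher interior regularity}. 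The outcome is a Caccioppoli-type bound that places $f_b(Q)$ in $L^p_{loc}$ for a sharp $p=p(s,A)>1$; the dichotomy visible in \eqref{eqn: improved dimension bound inverse power case} corresponds to whether the binding constraint in the optimization over $\sigma$ comes from the Sobolev embedding implicit in Proposition \ref{prop: higher interior regularity} (active when $p(A)\le 2$) or from the Caccioppoli estimate itself (active when $p(A)> 2$).

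To convert this integrability into a Hausdorff-dimension bound, pick $x_0\in\mathcal{C}$. The H\"older regularity of $Q$ from Proposition \ref{prop: higher interior regularity} forces $d(Q(x))\lesssim |x-x_0|^\alpha$ for $\alpha$ arbitrarily close to $(2p(A)-1)/(2p(A))$, so the lower bound in \eqref{bulk} gives $f_b(Q)(x)\gtrsim |x-x_0|^{-\alpha s}$ on a small ball around $x_0$. Combined with the new $L^p$-estimate and a Vitali-type covering argument in the spirit of Theorem \ref{thm1}, one obtains \eqref{eqn: improved dimension bound inverse power case}. The logarithmic bound \eqref{eqn: improved dimension bound log case} follows by running the same program with $f_b(Q)^\sigma$ replaced by $e^{\sigma f_b(Q)}$ and \eqref{eqn: gradient bound inverse power case}, \eqref{eqn: stronger assumptions inverse power case} replaced by \eqref{eqn: gradient bound log case}, \eqref{eqn: stronger assumptions by EKT log case}; the non-uniform constant $k(Q)\in[k_0,K_0]$ then enters the Caccioppoli step exactly through the ratio $(K_0-k_0)/k_0$ that is visible in the final bound. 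The main obstacle throughout is the $A$-cross term: since $\Delta Q + A\,\na\di Q$ does not commute cleanly with the nonlinear map $Q\mapsto f_b(Q)$, the Bochner identity picks up contributions of the form $\na(Df_b)\cdot \na Q$ that cannot be absorbed by the good $|Df_b|^2$ term alone, and controlling them requires the precise exponent $6p(A)$ from Proposition \ref{prop: higher interior regularity} together with a careful Young splitting---which is exactly where the threshold $p(A)=2$ in the statement of the theorem is produced.
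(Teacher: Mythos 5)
Your proposal diverges substantially from the paper's argument, and there are several points where it does not close as stated.

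The central gap is in the claim that ``using \eqref{eqn: gradient bound inverse power case} with Young's inequality, the $|Df_b|^2$ term is promoted to $f_b^{2+2/s}$.'' Assumption \eqref{eqn: gradient bound inverse power case} is a one-sided bound: $|Df_b(Q)|^s \leq C_s f_b(Q)^{s+1}$ only gives an \emph{upper} bound on $|Df_b|$ in terms of $f_b$. To convert $\int \eta^2 f_b^\sigma |Df_b|^2$ into a positive multiple of $\int \eta^2 f_b^{\sigma + 2 + 2/s}$ on the good side of a Caccioppoli inequality, you would need the matching \emph{lower} bound $|Df_b|\gtrsim f_b^{(s+1)/s}$, which is not among the hypotheses. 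Without this, your scheme does not produce a self-improving $L^\rho_{loc}$ estimate on $f_b$, and the subsequent dimension count has nothing to run on.

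A second issue is the use of the Euler--Lagrange equation $\Delta Q + A\,\na\di Q = Df_b(Q)$ pointwise in a Bochner identity. This equation is only available away from the contact set, and $Df_b(Q)$ is not even defined on $\mathcal{C}$. The paper handles this by first regularizing with the sequence $(f_b^\e, Q^\e)$ constructed in Lemma \ref{lemma: construction of f_b eps}, deriving the key estimate for the smooth problem, and then passing to the limit via Lemma \ref{converge} and Fatou. Your sketch does not address this; a limiting argument of that kind cannot be skipped because the quantity you want to control ($f_b(Q)$, or its gradient) is precisely the one that is singular on $\mathcal{C}$.

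Third, the treatment of the $A$-cross term is too vague to be credible. In the paper's Lemma \ref{lemma: estimate for f_b}, the threshold $p(A)$ enters through the weight $|D^h Q|^{2p-2}$ in the finite-difference test function: the same algebraic mechanism from the proof of Proposition \ref{prop: higher interior regularity} (inequalities \eqref{keyinequality}, the parameter $\om$, and the constraint $p<p(A)$) is needed to absorb the $A$-contribution, and \eqref{imp3} is then quoted verbatim. Your test function $\eta^2 f_b^\sigma$ lacks the $|\nabla Q|$-power that makes this algebra close; an appeal to ``$L^{6p(A)}$ control on $\nabla Q$ together with a careful Young splitting'' does not reproduce that mechanism, and it is not clear the dichotomy at $p(A)=2$ would emerge at all from this route. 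What is actually proved in the paper is a weighted gradient bound $\int_V |\na Q|^{2p-2}|\na f_b|^2/f_b \leq C$ (Lemma \ref{lemma: estimate for f_b}), not $f_b\in L^\rho_{loc}$.

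Finally, even granting a hypothetical $L^\rho_{loc}$ bound on $f_b$, the conversion to Hausdorff dimension you sketch differs from the paper's: the paper constructs a comparison configuration $\tilde Q_a = \rho\,\tilde{h}_a(Q)+(1-\rho)Q$ with $\tilde h_a(Q)=\tilde\eta_a(f_b(Q))Q$, replacing the distance-dependent retraction $h_a$ of Theorem \ref{thm1} with one adapted to $f_b$, and then iterates the resulting bound $|\Om_{a/\Lam_s}\cap U|\lesssim a^{1+s}|\Om_{\Lam_s a}\cap V|^{1-\f1{3p}}+a^{1+\f{3s}{2}}|\Om_{\Lam_s a}\cap V|^{\f12-\f{2-p}{6p}}$ (and its $p\geq 2$ variant). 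It is this specific iteration that produces the exponents in \eqref{eqn: improved dimension bound inverse power case} and \eqref{eqn: improved dimension bound log case}; your sketch neither uses that construction nor carries out any replacement arithmetic, so there is no way to check that the stated constants are recovered.
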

Unfortunately, because of the limitation of our approach, Theorem \ref{thm: improved dimension bound} is not an improvement of Theorem 4.3 in \cite{Evans2016partial}.

Since no continuity of $Q$ up to the boundary has be established, we are unable to extend the above discussion (even in the small $A$ case) from the interior up to $\partial \Omega$.
Instead, we prove the following partial regularity result at $\partial\Om$.

\begin{theorem}[Boundary partial regularity]\label{thm: boundary partial regularity}
Assume $A > -3/5$ and $f_b$ satisfies the assumptions \eqref{assumption for f_b: smooth and convex} and \eqref{assumption for f_b: infinity}.
The minimizer $Q\in \mathcal{A}$ of \eqref{energy} is H\"{o}lder continuous in $\Omega$ up to $\partial\Om\backslash S$ for some $S\subset \partial\Omega$, with $\mathcal{H}^1(S) = 0$.
Here $\mathcal{H}^1(\cdot )$ denotes the 1-dimensional Hausdorff measure.

In particular, $\mathcal{H}^1(\overline{\mathcal{C}}\cap \partial\Om) = 0$.
\end{theorem}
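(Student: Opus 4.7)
The approach I would take is to extend the interior partial-regularity scheme of \cite{Evans2016partial} to the boundary via a local flattening. For any boundary point $x_0$, I would first use a smooth diffeomorphism to identify a neighborhood of $x_0$ in $\overline{\Om}$ with a half-ball $B_r^+\subset\RR^3_+$, sending $\pa\Om$ to the flat face $\{x_3=0\}\cap B_r$. The transformed energy retains the same structure: an elastic density that remains coercive and strictly convex in $\na Q$, together with the unchanged singular potential $f_b$. The pulled-back Dirichlet datum on the flat face inherits smoothness and uniform separation from $\pa\BQ_{phy}$ from $Q_0$.

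The analytical core is a boundary $\varepsilon$-regularity lemma: there exist $\varepsilon_0,r_0>0$, depending only on $A$, $\|Q_0\|_{C^2(\overline\Om)}$ and $\dist(Q_0(\overline{\Om}),\pa\BQ_{phy})$, such that for any $x_0\in\pa\Om$ and $r\in (0, r_0)$, the smallness condition
\beno
r^{-1}\int_{B_r^+(x_0)}\bigl(|\na Q|^2 + f_b(Q)\bigr)\,dx < \varepsilon_0
\eeno
forces $Q\in C^{0,\al}(\overline{B_{r/2}^+(x_0)})$ for some $\al>0$. I would prove this by a blow-up/comparison argument. Introduce a reference map $P$ solving a linear elliptic boundary-value problem associated with $f_e$ on $B_r^+$, with smooth Dirichlet data on the flat face coming from $Q_0$ and a suitably averaged trace of $Q$ on the curved face. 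Agmon--Douglis--Nirenberg theory up to a flat boundary gives $C^{1,\al}$-estimates on $P$; in particular $P$ stays in $\BQ^\mathrm{o}_{phy}$ on a controlled scale and $f_b(P)$ is bounded. Using minimality of $Q$ together with strict convexity of $f_e$ (and of $f_b$ for an absorption argument exploiting only $f_b\geq 1$), one derives a Caccioppoli-type inequality and an excess-decay estimate of the form
\beno
\int_{B_{\theta r}^+(x_0)} |\na Q|^2\,dx \leq \theta^{2+2\al}\int_{B_r^+(x_0)} |\na Q|^2\,dx + Cr^{3-2\sigma},
\eeno
which iterates to a Morrey/Campanato decay and hence H\"older continuity of $Q$ up to the flat boundary.

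Putting this together, define the singular set
\beno
S:=\Big\{x_0\in\pa\Om:\, \limsup_{r\to 0^+} r^{-1}\int_{B_r^+(x_0)}\bigl(|\na Q|^2+f_b(Q)\bigr)\,dx\geq \varepsilon_0\Big\}.
\eeno
At every $x_0\in\pa\Om\setminus S$, the $\varepsilon$-regularity lemma yields H\"older continuity of $Q$ up to the boundary near $x_0$. Since the measure $d\mu:=(|\na Q|^2+f_b(Q))\,dx$ is finite and absolutely continuous with respect to $\mathcal{L}^3$, a standard Vitali covering / Lebesgue-differentiation argument gives $\mathcal{H}^1(S)=0$. Finally, at any $x_0\in\pa\Om\setminus S$, $Q$ extends continuously with $Q(x_0)=Q_0(x_0)\in\BQ^\mathrm{o}_{phy}$ at positive distance from $\pa\BQ_{phy}$, so a relative neighborhood of $x_0$ in $\overline\Om$ is disjoint from $\mathcal{C}$, whence $\overline{\mathcal{C}}\cap \pa\Om\subset S$ and the final claim follows.

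The main obstacle, as the paper emphasizes, is that no pointwise or even continuity bound on $Q$ is available up to $\pa\Om$ a priori, so the comparison step in the $\varepsilon$-regularity lemma must be executed delicately. Choosing a reference map $P$ that simultaneously stays inside $\BQ_{phy}$ (so that $f_b(P)$ is controlled) and faithfully reproduces the trace of $Q$ on the curved portion of $\pa B_r^+$ requires a trace-theoretic construction and careful bookkeeping of the bulk term $f_b$ through the comparison, using only its convexity and lower bound together with the fact that $P$ is bounded away from $\pa\BQ_{phy}$. This, rather than the now-routine excess-decay iteration, is where the genuine difference with the interior argument of \cite{Evans2016partial} lies.
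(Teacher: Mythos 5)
Your high-level scheme --- flatten the boundary, prove a boundary $\varepsilon$-regularity lemma, then apply a Vitali covering argument to bound $\mathcal{H}^1(S)$ and conclude $\overline{\mathcal{C}}\cap\pa\Om\subset S$ --- coincides with the paper's. Where you diverge is inside the $\varepsilon$-regularity lemma, and that is where a gap opens up. You propose the direct excess-decay route: construct a reference map $P$ as the solution of a linear elliptic boundary-value problem, use Agmon--Douglis--Nirenberg estimates to get $C^{1,\alpha}$ control on $P$, and iterate a decay estimate for $\int_{B_{\theta r}^+}|\na Q|^2$. The paper instead runs the variational blow-up method of Evans: assume the small-energy improvement fails along a sequence, rescale $Q_i=\e_i^{-1}(Q(r_ix)-Q_{r_i})$, prove strong $H^1_{\mathrm{loc}}$ convergence to a limit $\widetilde{Q}$, show $\widetilde{Q}$ minimizes the \emph{unconstrained} linear functional $\int(|\na Q|^2+A|\mathrm{div}\,Q|^2)$ on half-balls (Lemma~\ref{minlemmma}), and derive a contradiction from the elliptic decay of $\widetilde{Q}$.

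The concrete gap in your route is the competitor $P$. For $A\neq 0$ the system $f_e$ admits no maximum principle, so the solution of your linear boundary-value problem need not take values in $\BQ_{phy}$, let alone stay uniformly away from $\pa\BQ_{phy}$; then $f_b(P)$ can be infinite and the inequality implied by minimality of $Q$ yields nothing. Retracting $P$ into $\BQ^{\mathrm{o}}_{phy}$ (say via the map $h_a$ of Section~\ref{section: proof of main theorem}) changes both the elastic energy and the trace of $P$ on the curved part $\pa B_r^+\setminus\Gamma_r$, and you have no pointwise control of $Q$ there against which to match it. You flag this as the hard step but give no mechanism to resolve it, and it is not a routine patch. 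The paper's blow-up scheme sidesteps it in two ways. First, its competitor $G_i=\xi\widetilde{Q}^i+(1-\xi)Q_i+F_i$ is essentially a convex interpolation (up to the small correction $F_i$) between the rescaled minimizer and a truncated blow-up limit, and the admissibility $\e_iG_i+Q_{r_i}\in\BQ_{phy}$ is automatic for large $i$ because $|\e_i\widetilde{Q}^i|\leq\sqrt{\e_i}\to 0$ and $Q_{r_i}\to Q_0(0)\in\BQ^{\mathrm{o}}_{phy}$. Second, the correction $F_i=\widetilde{F}_i\cdot\phi(d(Q(r_ix)))$ of Lemma~\ref{lemma: construction of F_i}, which repairs the mismatch of $\xi\widetilde{Q}^i+(1-\xi)Q_i$ with $Q_i$ on the flat face $\Gamma_R$, is cut off by $\phi$ wherever $d(Q(r_ix))<\eta/2$; this forces the bulk-energy change to vanish as $i\to\infty$ because $f_b$ and $Df_b$ are only ever evaluated a fixed distance $\eta/4$ from $\pa\BQ_{phy}$. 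This distance-cutoff construction of the boundary correction is the crucial device your proposal is missing, and a direct comparison argument would need something playing the same role before the excess-decay iteration can even start.
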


The rest of the paper is organized as follows.
Section \ref{section: higher interior regularity} is devoted to proving Proposition \ref{prop: higher interior regularity} on the interior regularity of the energy-minimizer and Theorem \ref{thm: small A results}.
Theorem \ref{thm1} and Theorem \ref{thm: improved dimension bound}
will be proved in Section \ref{section: proof of main theorem}.
We show Theorem \ref{thm: boundary partial regularity} on the boundary partial regularity in Section \ref{section: boundary partial regularity}.
In the Appendices, we derive the formula for $p(A)$ in Proposition \ref{prop: higher interior regularity} in Appendix \ref{section: formula for p(A)}.
Useful properties of the distance function $d(Q)$ are proved in Appendix \ref{section: properties of d(Q)}.
Finally, in Appendix \ref{section: construction of f_b eps}, we shall present a construction of an approximating sequence of $f_b$ that will be used in Section \ref{section: proof of the improved bound}.

\section{Interior Regularity of the Minimizer and Proof of Theorem \ref{thm: small A results}}\label{section: higher interior regularity}

We first show Proposition \ref{prop: higher interior regularity}.
As is mentioned before, the $H^2_{loc}$-regularity has been established in \cite[Theorem 4.1]{Evans2016partial} under weaker assumptions.
The proof there uses standard arguments in the calculus of variations \cite{Evans}, with special care of the singular bulk energy.
For completeness, we still present it here in our context.
Then we shall generalize its idea to prove the $W^{1,q}_{loc}$-regularity.

\begin{proof}[Proof of Proposition \ref{prop: higher interior regularity}]
\setcounter{step}{0}
\begin{step}[Basic setup]
Fix an open subset $V$ of $\Om$ such that $V\subset\subset \Om$.
We select another open subset $W$ such that $V\subset\subset W\subset\subset \Om$.
Take a nonnegative smooth cutoff function $\xi$ in $\BR^3$ supported on $\overline{W}$, such that $
\xi\equiv 1$ on $V$ and $\|\nabla \xi\|_{L^\infty(\Om)}\leq C(V,W)$.
Let $e_1,e_2,e_3$ be the standard coordinate vectors in $\BR^3$.
With $p\geq 1$ is to be determined, let
\beq\label{v1}
u_k= D_k^{-h}(\xi^2|D^h_k Q|^{2p-2}D_k^hQ),
\eeq
where
$$
D_k^hQ(x):=\f{Q(x+he_k)-Q(x)}{h}. 
$$
Here in order to make the above two quantities well-defined throughout $\Omega$, we make zero extension of $Q$ outside $\Omega$ (still denoted by $Q$).

Consider
\begin{equation}
\begin{split}
&\;(Q+\e u_k)(x) \\
= &\;\f{\e(\xi^2|D_k^hQ|^{2p-2})(x-he_k)}{h^2}Q(x-h e_k)+\f{\e(\xi^2|D_k^hQ|^{2p-2})(x)}{h^2}Q(x+he_k)\\
&\;+\left(1-\f{\e(\xi^2|D_k^hQ|^{2p-2})(x-he_k)+\e (\xi^2|D_k^hQ|^{2p-2})(x)}{h^2}\right)Q(x).
\end{split}
\label{eqn: the candidate}
\end{equation}
It is easy to show that $(Q+\e u_k) = Q = Q_0$ on $\partial\Omega$ if $h\leq C(W,\Om)$.

\end{step}

\begin{step}[Interior $H^2$-regularity]
Let $p = 1$ in \eqref{eqn: the candidate}.
Provided that $0<\e\ll h^2$, $(Q+\e u_k)(x)$ is a convex combination of $Q(x)$, $Q(x+he_k)$, and $Q(x-he_k)$, 
which implies that $(Q+\e u_k)(x)\in \mathcal{Q}_{phy}$ for $\forall\,x\in \Omega$.
On the other hand, it is obvious that $(Q+\e u_k)$ has $H^1$-regularity and satisfies the boundary condition.
Hence, $(Q+\e u_k)\in\mathcal{A}$.
By the convexity of $f_b$,
\begin{equation*}
\begin{split}
&\;f_b(Q+\e u_k)\\
\leq &\; \f{\e\xi(x-he_k)^2}{h^2}f_b(Q(x-he_k))+\f{\e\xi(x)^2}{h^2}f_b(Q(x+he_k))+\left(1-\f{\e\xi(x-he_k)^2}{h^2}-\f{\e\xi(x)^2}{h^2}\right)f_b(Q(x)).
\end{split}
\end{equation*}
Hence,
\beqo
\begin{split}
&\;\int_{\Om} f_b(Q+\e u_k)-f_b(Q)\,dx\\
\leq &\; \f{\e}{h^2}\int_{\BR^3} \xi(x-he_k)^2[f_b(Q(x-he_k))-f_b(Q(x))]+\xi(x)^2[f_b(Q(x+he_k))-f_b(Q(x))]\,dx\\
=&\;0.
\end{split}
\eeqo
In the last line, we used change of variables.
Since $Q$ minimizes $E[Q]$ in $\mathcal{A}$, i.e.,
\beqo
\int_{\Om}f_e(\nabla(Q+\e u_k))+f_b(Q+\e u_k)\,dx\geq \int_{\Om} f_e(\nabla Q)+f_b(Q)\,dx,
\eeqo
we deduce that
\beq\label{eqn: elastic energy comparison}
\int_\Om f_e(\nabla Q)\,dx\leq \int_\Om f_e(\nabla(Q+\e u_k))\,dx.
\eeq
Sending $\e\ri 0^+$ yields a variational inequality,
$$
\int_{\Om }\f{\pa f_e(\nabla Q)} {\pa Q_{ij,l}}\cdot \pa_l\left( D^{-h}_k(\xi^2 D_k^h Q_{ij})\right)\,dx\geq 0.
$$
Here the summation convention applies to all repeated indices except for $k$;
it will always be this case in the rest of this proof.
Using integration by parts for $D_k^{-h}$,
$$
\int_{\Om}\xi^2 D_k^h\left(\f{\pa f_e(\nabla Q)}{\pa Q_{ij,l}}\right)  D_k^h Q_{ij,l}\,dx
\leq -\int_{\Om}D_k^h\left(\f{\pa f_e(\nabla Q)}{\pa Q_{ij,l}}\right) \cdot 2\xi\pa_l \xi D_k^h Q_{ij}\,dx.
$$
Recall that $f_e(\nabla Q)$ is quadratic in $\nabla Q$, and it is coercive.
Hence, with $a>0$ to be determined,
$$
c\int_{\Om} \xi^2 |D_k^h \nabla Q|^2\,dx\leq a\int_{\Om} \xi^2 |D_k^h \nabla Q|^2\,dx +\f{C}{a}\int_{\Om} |\nabla \xi|^2|D_k^h Q|^2\,dx,
$$
where $c$ and $C$ are universal constants only depending on $A$.
Taking $a<c/2$, we end up having
$$
\int_\Om \xi^2 |D_k^h \nabla Q|^2\,dx \leq C \int_{W} |D_k^h Q|^2\,dx\leq C \int_{\Om} |\nabla Q|^2\,dx,
$$
where $h\ll 1$ and $C>0$ only depends on $A$ and $\xi$.
This bound is independent of $h$, which implies that \cite{Evans}
\beqo
\int_V |\nabla^2 Q|^2\,dx\leq C(A,V)\int_{\Om} |\nabla Q|^2\,dx.
\eeqo
Hence, $Q\in H^2_{loc}(\Om)$.
\end{step}

\begin{step}[Interior $W^{1,q}$-regularity]
Take $p>1$ to be determined.
By Sobolev embedding, in 3-D, $Q\in H^2_{loc}(\Omega)$ implies $Q$ is locally bounded.
Hence, in the interior of $\Om$, if $h\ll 1$, $|D_k^h Q|\leq C/h$.
In fact, in our problem, $Q$ should enjoy a trivial $L^\infty$-estimate since $\BQ_{phy}$ is bounded.
However, we avoid using this natural $L^\infty$-bound, so that the same proof can be applied to situations where no $L^\infty$-estimate is a priori available.
This point will be useful later in Section \ref{section: proof of the improved bound}.
Hence, in the interior of $\Om$, $(Q+\e u_k)(x)$ is a convex combination of $Q(x)$, $Q(x+he_k)$, and $Q(x-he_k)$, provided that $0<\e\ll h^{2p}$.
This implies $(Q+\e u_k)(x)\in \mathcal{Q}_{phy}$ for $\forall\,x\in \Om$.
On the other hand, combining the interior $H^2$-regularity of $Q$ with \eqref{eqn: the candidate}, $(Q+\e u_k)\in H^1(\Omega, \BQ_{phy})$ for arbitrary $p> 1$.
Therefore, $(Q+\e u_k)\in \mathcal{A}$.
Then we argue as before by the convexity of $f_b$ to find that \eqref{eqn: elastic energy comparison} still holds, which allows us to derive a similar variational inequality,
\beqo
\int_{\Om}D_k^h\left(\f{\pa f_e(\nabla Q)}{\pa Q_{ij,l}}\right) \pa_l (\xi^2 |D_k^h Q|^{2p-2}D_k^h Q_{ij})\,dx\leq 0.
\eeqo
Substituting the form of $f_e$ in \eqref{elastic} into this inequality, we calculate that
\begin{equation*}
\begin{split}
&\;\int_{\Om} D_k^h(Q_{ij,l}+\delta_{jl}A(\di Q)_i)\left[D_k^hQ_{ij,l}|D_k^h Q|^{2p-2}\xi^2\right.\\
&\;\left.\qquad+ \xi^2 D_k^h Q_{ij} \cdot(2p-2)|D_k^h Q|^{2p-4}(D_k^h\pa_l Q : D_k^h Q)\right]\,dx \\
\leq &\;-\int_\Om  D_k^h(Q_{ij,l}+\delta_{jl}A(\di Q)_i)\cdot 2\xi\pa_l\xi|D_k^h Q|^{2p-2} D_k^h Q_{ij}\,dx.
\end{split}
\end{equation*}
Here $D_k^h\partial_l Q: D_k^hQ := D_k^h Q_{ij,l}D_k^hQ_{ij}$.
By Cauchy-Schwarz inequality, with $c>0$ to be determined,
\begin{equation}\label{imp1}
\begin{split}
&\;\int_\Om \xi^2(|D_k^h \nabla Q|^2+A|D_k^h\di Q|^2)|D_k^h Q|^{2p-2}\,dx\\
&\;+\int_\Om (2p-2)\xi^2 |D_k^h\nabla Q: D_k^hQ|^2|D_k^hQ|^{2p-4}\,dx\\
&\;+\int_\Om A(2p-2)\xi^2|D_k^hQ|^{2p-4}(D_k^h\di Q)_i \cdot (D_k^hQ)_{ij}\cdot (D_k^h\nabla Q: D_k^hQ)_j\,dx\\
\leq &\;c \int_\Om \xi^2|D_k^h \nabla Q|^2|D_k^h Q|^{2p-2}\,dx+ \f{C}{c}\int_{\Om} |\nabla \xi|^2 |D_k^hQ|^{2p}\,dx,
\end{split}
\end{equation}
where $C>0$ is a universal constant only depending on $A$.
Introducing another parameter $\om \in [0,1]$ to be determined, which satisfies
\beq
\f{3}{5}\om+A \geq 0,
\label{eqn: constraint on omega}
\eeq
we derive that
\begin{equation*}
\begin{split}
&\;(\om |D_k^h \nabla Q|^2+A|D_k^h\di Q|^2)|D_k^h Q|^{2p-2}\\
\geq &\;\left(\f35\om+A\right)|D_k^h\di Q|^2|D_k^h Q|^{2p-2}\\
\geq &\;\frac{3}{2}\left(\f35\om+A\right)|D_k^h\di Q|^2\|D_k^h Q\|_2^2|D_k^h Q|^{2p-4},
\end{split}
\end{equation*}
where $\|\cdot\|_2$ denotes the matrix 2-norm.
Here we used
\beq\label{keyinequality}
\|D_k^h Q\|_2^2 \leq \frac{2}{3}|D_k^h Q|^2,\quad |D_k^h\di Q|^2\leq \f53|D_k^h \nabla Q|^2.
\eeq
Indeed, they can be justified by using the fact that $D_k^h Q$ and $D_k^h \pa_l Q$ are symmetric traceless $3\times 3$-matrices.  We give a detailed proof of these two inequalities in Appendix \ref{section: proof of key inequality}.

On the other hand, 
\beqo
(1-\om)|D_k^h \na Q|^2 |D_k^h Q|^{2p-2}\geq (1-\om) |D_k^h\nabla Q: D_k^hQ|^2|D_k^hQ|^{2p-4}
\eeqo
Hence, by Young's inequality,
\begin{equation*}
\begin{split}
&\;(|D_k^h \nabla Q|^2+A|D_k^h\di Q|^2)|D_k^h Q|^{2p-2}+(2p-2)|D_k^h\nabla Q: D_k^hQ|^2|D_k^hQ|^{2p-4}\\
\geq &\;2\sqrt{\frac{3}{2}\left(\f35\om +A\right)[(1-\om)+(2p-2)]}|D_k^hQ|^{2p-4}|(D_k^h\di Q)_i \cdot (D_k^hQ)_{ij}\cdot (D_k^h\nabla Q: D_k^hQ)_j|.
\end{split}
\end{equation*}
If $p>1$ satisfies that
\beqo
2\sqrt{\frac{3}{2}\left(\f35\om +A\right)[(1-\om)+(2p-2)]}>|A(2p-2)|,
\eeqo
i.e.,
\beq
1<p<1+\f{\f95\left(\om+\f53 A\right)+\sqrt{\f{81}{25}\left(\om+\f53 A\right)^2+\f{18}{5}A^2\left(\om+\f53 A\right)(1-\om)}}{2A^2}=:p(A,\om),
\label{eqn: constraint on p with omega}
\eeq
then we are able to take $c>0$ suitably small in \eqref{imp1} to obtain that
\beq\label{imp3}
\int_\Om \xi^2|D_k^h \nabla Q|^2|D_k^h Q|^{2p-2}\,dx\leq C\int_\Om |\nabla \xi|^2 |D_k^hQ|^{2p}\,dx,
\eeq
where $C>0$ depends only on $A$, $p$ and $\om$.

By maximizing the right hand side of \eqref{eqn: constraint on p with omega} over all admissible $\om$, we may take any $p\in(1,p(A))$, where
$$
p(A) := \sup_{\om\in [0,1],\,\om+\f{5}{3}A\geq 0}p(A,\om).
$$
We shall show in Lemma \ref{lemma: formula for p(A)} that $p(A)$ has a more explicit form given by \eqref{eqn: def of p(A)}.

To this end, by \eqref{imp3}, with $h\ll 1$,
$$
\int_\Om |\nabla(\xi|D_k^h Q|^p)|^2\,dx\leq C\int_{\Om} |\nabla \xi|^2|D_k^h Q|^{2p}\,dx\leq C(\xi,A,p)\int_{W'}|\nabla Q|^{2p}\,dx.
$$
Here $W'$ is an open subset of $\Om$, such that $W\subset\subset W'\subset\subset \Om$.
By Sobolev embedding,
$$
\|D_k^h Q\|_{L^{6p}(V)}\leq C(V,W,A,p)\|\nabla Q\|_{L^{2p}(W')}.
$$
This estimate is independent of $h$ as long as $h\ll 1$.
Hence, 
$$
\|\nabla Q\|_{L^{6p}(V)}\leq C(V,W,A,p)\|\nabla Q\|_{L^{2p}(W')}.
$$
Combining this with the interior $H^2$-regularity in the previous step, we conclude, by making iterations if needed, that
\beq\label{imp5}
\|\nabla Q\|_{L^{6p}(V)}\leq C(V,A,p)\|\nabla Q\|_{L^2(\Om)},
\eeq
as long as $p\in (1,p(A))$.
This proves the interior $W^{1,q}$-regularity of $Q$, where $q = 6p$.
\end{step}
\end{proof}
Theorem \ref{thm: small A results} then follows from a compactness-type argument.

\begin{proof}[Proof of Theorem \ref{thm: small A results}]
Suppose the statement is false for some $V\subset\subset \Om$.
Then there exists a sequences $A_i\ri 0$, such that the minimizer $Q_{A_i}\in \mathcal{A}$ of \eqref{energy} with parameter $A_i$ admits $d(Q_{A_i}(x_i))=0$ at some $x_i\in V$.
We may assume $x_i \ri x_*\in \overline{V}$.

Take $W$ such that $V\subset\subset W\subset\subset \Om$.
Since the $H^2(W)$-estimate in Proposition \ref{prop: higher interior regularity} is uniform for all $A$ sufficiently close to $0$, $Q_{A_i}$ has uniform bound in $C^{1/2}(W)$.
By Arzel\`{a}-Ascoli lemma, up to a subsequence, there exists $Q_*$ such that $Q_{A_i}\ri Q_*$ uniformly in $\overline{V}$.
On the other hand, the uniform-in-$A$ $H^2_{loc}(\Om)$-estimate also implies that, up to a further sequence, the convergence is strong in $H^1_{loc}(\Om)$.
Without loss of generality, we may assume that $Q_{A_i}\ri Q_*$ and $\nabla Q_{A_i}\ri \nabla Q_*$ almost everywhere in $\Om$.
Hence,
\beqo
f_b(Q_{A_i})\ri f_b(Q_*),\,f_e(\nabla Q_{A_i})\ri f_e(\nabla Q_*),\text{ a.e.\;in }\Om.
\eeqo
Let $Q^*$ denote the energy minimizer when $A = 0$.
By Fatou's Lemma,
\beqo
\int_\Om f_e(\nabla Q_*)+f_b(Q_*)\,dx\leq \lim_{i\to + \infty}E_{A_i}[Q_{A_i}]\leq \lim_{i\to +\infty}E_{A_i}[Q^*] = E_{0}[Q^*].
\eeqo
By the uniqueness of the energy minimizer, $Q_* = Q^*$.
Meanwhile, by the uniform convergence of $Q_{A_i}$ to $Q_*$ in $\overline{V}$, the uniform $C^{1/2}(W)$-bound of $Q_{A_i}$, and the fact that $d(\cdot)$ is Lipschitz continuous (see Lemma \ref{lemma: d(Q) is Lipschitz}),
$$
d(Q_*(x_*)) = \lim_{i\to +\infty}d(Q_{A_i}(x_*)) \leq \lim_{i\to +\infty}d(Q_{A_i}(x_i))+C|x_*-x_i|^{1/2} = 0,
$$
which implies $x_*\in \mathcal{C}$.
This contradicts with the fact that $\mathcal{C} = \varnothing$ when $A = 0$ \cite{BM,BP}.

Since $\mathcal{C} = \varnothing$ in $V$, $Q_A$ satisfies an Euler-Lagrange equation on $V$,
$$
-\D Q_A - A\na \mathrm{div}\,Q_A = -Df_b(Q_A).
$$
With $f_b$ smooth, the smoothness of $Q_A$ follows from the regularity theory of elliptic systems by a bootstrap argument.
\end{proof}

\section{Proof of Theorem \ref{thm1}}
\label{section: proof of main theorem}
In this section, we shall prove Theorem \ref{thm1}.
With $a >0$ sufficiently small, we define
\beq
\Om_a = \{x\in\Omega:\; d(Q(x))\leq a\}.
\label{eqn: def of lower level set of distance}
\eeq
The main idea of the proof is to derive a bound for the size of $\Omega_a$.

\begin{proof}[Proof of Theorem \ref{thm1}]
\setcounter{step}{0}
\begin{step}

Let
$\eta_a:\;[0,\infty)\ri\mathbb{R}_+$ be defined by
$$
\eta_a(x) = \min\left\{1,
\frac{1-\sqrt{6}a}{1-\sqrt{6}x}\right\}.
$$
By Lemma \ref{lemma: characterization of d(Q)} (in particular, \eqref{formula for d(Q)}), it can be verified that the map $h_a: Q\mapsto \eta_a(d(Q))Q$ retracts $\BQ_{phy}$ to a smaller subset of $\BQ^{\mathrm{o}}_{phy}$. 
To be more precise,
$h_a(Q) \equiv Q$ if $d(Q)> a$, and $d(h_a(Q))=a$ if $d(Q)\leq a$.
Moreover, since $\eta_a\circ d$ is piecewise-smooth on $\BQ_{phy}$, by a limiting argument, $h_a(Q)\in H^1(\Om)$.

Take arbitrary $U\subset\subset V\subset\subset \Omega$.
Define a smooth cut-off function $\rho\in C_0^\infty(V)$ such that $\rho\in [0,1]$ and $\rho \equiv 1$ on $U$.
Let
$$
Q_a(x) = \rho h_a(Q) + (1-\rho)Q.
$$
Obviously when $a\ll 1$, $Q_a\in \mathcal{A}$.
Also, by taking $a$ even smaller if needed, we may assume that $f_b(0)\leq f_b(Q)$ for all $Q\in \BQ_{phy}$ such that $d(Q)\leq  a$.
Then by the convexity of $f_b$, $f_b(Q)\geq f_b(Q_a)$ for all $x\in \Omega$, as $Q_a(x)$ can be viewed as a convex combination of $Q(x)$ and $0$.
\end{step}

\begin{step}
We shall use $Q_a$ as a comparison configuration in \eqref{energy}.
By the minimality of $Q$, 
\beq\label{eqn: comparing Q_a and Q}
\int_{\Om}f_b(Q(x))-f_b(Q_a(x))\,dx\leq \int_{\Om}f_e(\nabla Q_a(x))-f_e(\nabla Q(x))\,dx.
\eeq
With $\lambda>1$ to be determined, we derive that
\beqo
\int_{\Om}f_b(Q(x))-f_b(Q_a(x))\,dx
\geq \int_{\Omega_{a/\lambda}\cap U} m_s \left(\frac{a}{\lambda}\right)^{-s} - M_s a^{-s}\,dx.
\eeqo
Taking $\lam = \Lam_s$, with
\beq
\Lam_s := \left(\f{2M_s}{m_s}\right)^\f{1}{s},
\label{eqn: def of Lambda s}
\eeq
we find that
\beq\label{eqn: lower bound of LHS}
\int_{\Om}f_b(Q(x))-f_b(Q_a(x))\,dx
\geq \frac{m_s}{2}\cdot  \left(\frac{a}{\Lam_s}\right)^{-s}|\Omega_{a/\Lam_s}\cap U|.
\eeq
On the right hand side of \eqref{eqn: comparing Q_a and Q}, since $f_e$ is quadratic,
\beqo
\begin{split}
&\;\left|\int_{\Om}f_e(\nabla Q_a(x))-f_e(\nabla Q(x))\,dx\right|\\
\leq &\; C\int_{\Omega}|\nabla (Q_a-Q)|^2 + |\nabla Q||\nabla (Q_a-Q)|\,dx\\
= &\; C\int_{\Omega_a\cap V}|\nabla (\rho Q(\eta_a(d(Q))-1))|^2 + |\nabla Q||\nabla  (\rho Q(\eta_a(d(Q))-1))|\,dx.
\end{split}
\eeqo
Since $\eta_a\circ d$ is a piecewise-smooth map with Lipschitz constant $1$ and $|\eta_a-1|\leq Ca$,
\beqo
\left|\int_{\Om}f_e(\nabla Q_a(x))-f_e(\nabla Q(x))\,dx\right|
\leq C\int_{\Omega_a\cap V}|\nabla Q|^2+a^2 |\nabla \rho|^2 \,dx.
\eeqo
By H\"older's inequality and Proposition \ref{prop: higher interior regularity},
$$
\|\nabla Q\|_{L^2(\Omega_a\cap V)}\leq \|\nabla Q\|_{L^q(\Omega_a\cap V)}|\Omega_a\cap V|^{\frac{1}{2}-\frac{1}{q}}\leq C\|\nabla Q\|_{L^2(\Omega)}|\Omega_a\cap V|^{\frac{1}{2}-\frac{1}{q}}.
$$
where $q$ satisfies \eqref{eqn: condition on q} and $C$ depends on $q$, $V$ and $A$.
Therefore, for $a\ll 1$,
\beq\label{eqn: upper bound for RHS}
\left|\int_{\Om}f_e(\nabla Q_a(x))-f_e(\nabla Q(x))\,dx\right|
\leq C(|\Omega_a\cap V|^{1-2/q}+ a^2 |\Omega_a\cap V|)\leq C|\Omega_a\cap V|^{1-2/q},
\eeq
where $C$ depends on $A$, $q$, $U$, $V$ and $E[Q]$.
Combining \eqref{eqn: comparing Q_a and Q}, \eqref{eqn: lower bound of LHS}, and \eqref{eqn: upper bound for RHS}, we obtain that
\beq\label{eqn: iterative bounds on the size of up level set}
\frac{|\Omega_{a/\Lam_s}\cap U|}{(a/\Lam_s)^s}\leq C|\Omega_a\cap V|^{1-2/q}.
\eeq
\end{step}

\begin{step}

We first show part \eqref{part: Hausdorff dimension of singular set} of Theorem \ref{thm1}.

Since $|\Om_a\cap V|\leq C$, we find that for all $a\ll 1$, by virtue of \eqref{eqn: iterative bounds on the size of up level set}, $|\Om_{a/\Lam_s}\cap U|\leq C(a/\Lam_s)^s$.
With the notation changed, this is equivalent to that, for all $V\subset\subset \Omega$ and $a\ll 1$, $|\Om_a\cap V|\leq Ca^s$.
Applying this new bound in \eqref{eqn: iterative bounds on the size of up level set} yields that $|\Om_a\cap V|\leq Ca^{s+s(1-2/q)}$, with a different constant $C$.
By repeating this procedure, we can show that for all
\beq
\beta < sq/2,
\eeq
all $V\subset\subset \Om$,
\beq\label{eqn: bounds for up level set}
|\Omega_a\cap V|\leq Ca^\beta\quad \mbox{ for all } a\ll 1,
\eeq
where $C$ depends on $\beta$, $A$, $M_s$, $m_s$, $q$, $V$, $E[Q]$ and $\Om$, but not on $a$.

Now we assume $\mathcal{C}\cap U\not = \varnothing$ for some $U\subset\subset V\subset\subset\Om$, and $x_0\in \mathcal{C}\cap U$; otherwise, we have nothing to prove.
By Proposition \ref{prop: higher interior regularity} and Sobolev embedding,
$Q\in C^{\alpha}_{loc}(\Om)$ where $\alpha = 1-3/q$.
Hence, for all $x\in B_r(x_0)$ with $r \leq d(x_0,\partial V)/2$, $d(Q(x)) \leq Cr^\alpha$, where $C$ does not depend on $r$.
Therefore, for $r\ll 1$,
\beq\label{eqn: neighborhood of C contained in up level set}
(\mathcal{C}\cap U)_r \subset \{d(Q(x))\leq Cr^{\alpha}\}\cap V, 
\eeq
where $(\mathcal{C}\cap U)_r := \cup_{x\in \mathcal{C}\cap U}B_r(x)$ is the $r$-neighborhood of $\mathcal{C}\cap U$.
Combining this with \eqref{eqn: bounds for up level set}, we find that for all $\gamma <s(q-3)/2$, and $r\ll 1$,
\beq\label{eqn: bounds on the size of neighborhood}
|(\mathcal{C}\cap U)_r|\leq Cr^{\gamma}.
\eeq
Since $\cup_{x\in \mathcal{C}\cap U}\overline{B_r(x)}$ is a covering of $(\mathcal{C}\cap U)_r$ with finite radius bound, by Vitali Covering Lemma, there exist a countable set $\mathcal{C}'_r = \{x_i\}\subset \mathcal{C}\cap U$, such that $\overline{B_r(x_i)}$ are disjoint, and $\cup_{x\in \mathcal{C}'_r}\overline{B_{5r}(x)}$ is a covering of $\mathcal{C}\cap U$.
By the disjointness of $\overline{B_r(x_i)}$, $\cup_{x\in \mathcal{C}'_r}B_r(x)\subset (\mathcal{C}\cap U)_r$, which together with \eqref{eqn: bounds on the size of neighborhood} implies that $\mathcal{C}_r'$ is a finite set with $|\mathcal{C}_r'|\leq Cr^{\gamma-3}$.
Since $\mathcal{C}\cap U \subset \cup_{x\in \mathcal{C}'_r} \overline{B_{5r}(x)}$, we find $\mathcal{H}^{3-\gamma}_{10r}(\mathcal{C}\cap U)\leq C$ (see e.g.\;\cite{lin2002geometric} for the notation), where $C$ is independent of $r$.
Therefore, 
$\mathcal{H}^{3-\gamma}(\mathcal{C}\cap U)< +\infty$.

Since $\gamma$ is arbitrary as long as $\gamma<s(q-3)/2$ with $q$ satisfying \eqref{eqn: condition on q}, we conclude that
$$
\dim_{H}(\mathcal{C}\cap U) \leq 3-\frac{3s}{s(A)}
$$
for any open subset $U\subset \Om$.
Applying this estimate to an exhaustion $\{U_i\}_{i=1}^\infty$ of $\Om$, we can prove part \eqref{part: Hausdorff dimension of singular set} of Theorem \ref{thm1}.
\end{step}

\begin{step}
Finally, we prove part \eqref{part: empty contact set} of Theorem \ref{thm1} by contradiction.

Suppose $x_0 \in \mathcal{C}$ for some $s>s(A)$.
Take $x_0\in  V\subset\subset W\subset\subset \Om$.
By \eqref{eqn: neighborhood of C contained in up level set}, for all $a\ll 1$,
$B_{Ca^{1/\alpha}}(x_0)\subset \{d(Q)\leq a\}\cap V$, where $\alpha = 1-3/q$ and $q$ satisfies \eqref{eqn: condition on q}.
Hence,
$$
|\Om_a \cap V|\geq |B_{Ca^{1/\alpha}}(x_0)| = Ca^{3/\alpha} = Ca^{3q/(q-3)} =: Ca^{q_0},
$$
where $C$ is independent of $a$.
Applying this bound on the left hand side of \eqref{eqn: iterative bounds on the size of up level set} yields that, for all $a\ll 1$,

$$
|\Om_a \cap W| \geq C a^{\frac{q_0 - s}{1-2/q}}=: Ca^{q_1}.
$$
Here $C$ is a different universal constant.
Repeating this procedure, we obtain a sequence $\{q_n\}$ by
$$
q_n = \frac{q_{n-1}-s}{1-2/q}\quad \mbox{ for }n\geq 1,
$$
such that for any $U \subset\subset \Om$, and $a\ll 1$, $|\Om_a \cap U|\geq Ca^{q_n}$.
Here the constant $C$ should depend on $n$ but not on $a$.
However, if $s >2q_0/q$, it is easy to verify that there exists $N<\infty$, such that $q_N <0$, and
$$
|\Om_a \cap U|\geq Ca^{q_N}\quad \mbox{ for all }a\ll 1,
$$
which is obviously impossible.

By \eqref{eqn: condition on q} and \eqref{eqn: condition on s}, whenever $s>s(A)$, $s>2q_0/q = 6/(q-3)$ is achievable by suitably choosing $q$.
This proves $\mathcal{C} = \varnothing$.
\end{step}
\end{proof}

\section{Proof of Theorem \ref{thm: improved dimension bound}}
\label{section: proof of the improved bound}

Recall that in the proof of Proposition \ref{prop: higher interior regularity}, we got rid of the singular bulk energy term at the very beginning thanks to the convexity of $f_b$.
However, the new assumptions \eqref{eqn: stronger assumptions by EKT log case} and \eqref{eqn: stronger assumptions inverse power case} on $f_b$ allow us to make use of the bulk energy term in a better way, which leads to the improvement in Theorem \ref{thm: improved dimension bound}.

In what follows, we shall first recast the proof of Proposition \ref{prop: higher interior regularity} to derive an estimate involving $f_b(Q)$.
However, as it is hard to directly work with $f_b$ with singularity,
we shall introduce an approximating sequence $\{f_b^\e\}_{0<\e\ll 1}$ of $f_b$ with no singularity in the entire $\mathcal{Q}$.
Indeed, we have

\begin{lemma}
\label{lemma: construction of f_b eps}
Suppose $f_b$ satisfies the assumptions \eqref{assumption for f_b: smooth and convex} and \eqref{assumption for f_b: infinity}.
Then there exists $\{f^\e_b\}_{0<\e\ll 1}$ satisfying the following conditions:
\begin{enumerate}[($i'$)]
\item For all $0<\e\ll 1$, $f_b^\e(Q)\in [0, \infty)$ for all $Q\in \mathcal{Q}$;
\item $f_b^\e$ are convex and smooth in $\mathcal{Q}$;
\item $f_b^\e(Q)\leq f_b(Q)$ for all $Q\in \mathcal{Q}$.
\item Moreover,
$$
\lim_{\e\ri 0^+} f_b^\e(Q) = f_b(Q),\quad \lim_{\e\ri 0^+} Df_b^\e(Q) = Df_b(Q)
$$
locally uniformly in $\BQ^\mathrm{o}_{phy}$.
\end{enumerate}
Here $Df_b^\e(Q)$ denotes the gradient of $f_b^\e$ with respect to $Q$.

\end{lemma}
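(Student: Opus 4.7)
The plan is a two-step regularization: first Moreau--Yosida the singular potential $f_b$ to obtain a finite-valued convex function on all of $\mathcal{Q}$ with Lipschitz gradient, then mollify to upgrade to $C^\infty$ while correcting by a small subtraction to preserve the upper bound by $f_b$.

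Concretely, I would first define, for $\e>0$,
\[
g^\e(Q) := \inf_{R \in \BQ^{\mathrm{o}}_{phy}}\left\{f_b(R)+\frac{1}{2\e}|Q-R|^2\right\}.
\]
Since $f_b\geq 1$ on $\BQ^{\mathrm{o}}_{phy}$ and $f_b(R)\to +\infty$ as $R\to\partial\BQ_{phy}$, for any $Q\in\mathcal{Q}$ the infimum is attained at a unique interior minimizer $R^\e(Q)$. Classical Moreau--Yosida theory then yields that $g^\e$ is convex on $\mathcal{Q}$, everywhere differentiable with $Dg^\e(Q)=(Q-R^\e(Q))/\e$, and belongs to $C^{1,1}_{\mathrm{loc}}(\mathcal{Q})$ with Lipschitz constant $1/\e$. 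Taking $R=Q$ when $Q\in\BQ^{\mathrm{o}}_{phy}$ and using $f_b\equiv+\infty$ outside gives $g^\e\leq f_b$ globally, and taking any fixed interior $R$ gives $g^\e\geq 1$. On any compact $K\subset\BQ^{\mathrm{o}}_{phy}$, smoothness of $f_b$ implies $g^\e\to f_b$ and $Dg^\e\to Df_b$ uniformly as $\e\to 0^+$.

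For smoothness, I pick a standard symmetric mollifier $\rho_\delta$ on $\mathcal{Q}$ and set
\[
f_b^\e(Q) := (g^\e \ast \rho_{\delta(\e)})(Q) - C_0\e,
\]
with $\delta(\e)\leq \e$ and $C_0$ to be fixed. Convolution with a nonnegative kernel preserves convexity, and the smoothness of $\rho_\delta$ propagates; moreover, since $\rho_\delta$ is centered, Jensen yields $g^\e\ast\rho_\delta\geq g^\e$, while Taylor expansion paired with the $C^{1,1}$ bound $\|D^2 g^\e\|_\infty\leq C/\e$ gives $g^\e\ast\rho_\delta - g^\e \leq C\delta^2/\e \leq C_0\e$. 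Thus $f_b^\e\leq g^\e\leq f_b$ globally, and $f_b^\e\geq 1-C_0\e\geq 0$ for $\e$ small. For the convergence claim on interior compacta $K$, the bound $\|f_b^\e-g^\e\|_{L^\infty(K)}\to 0$ combined with Moreau-envelope convergence gives $f_b^\e\to f_b$ uniformly on $K$; similarly $Df_b^\e = Dg^\e\ast\rho_\delta$ converges to $Df_b$ uniformly on $K$, because $Dg^\e$ is continuous on a slightly larger compact and converges uniformly there to $Df_b$.

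The main obstacle is that mollifying a convex function can only push its values up, which directly conflicts with the required upper bound $f_b^\e\leq f_b$ — most dangerously near $\partial\BQ_{phy}$, where $f_b$ is steep. This is reconciled by tuning the subtraction $C_0\e$ against the $C^{1,1}$ norm of $g^\e$, which is $O(1/\e)$; the scaling $\delta^2/\e \leq \e$ then forces the mollification scale $\delta$ to be no larger than $\e$ itself. A secondary technical point is ensuring the Moreau-envelope minimizer stays in $\BQ^{\mathrm{o}}_{phy}$ for every $Q\in\mathcal{Q}$, which follows from the blow-up of $f_b$ at $\partial\BQ_{phy}$ combined with boundedness of the quadratic penalty on the bounded set $\BQ_{phy}$.
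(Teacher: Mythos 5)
Your proof is correct, and it takes a genuinely different route from the paper's. The paper constructs a piecewise-affine convex minorant $F_b^\e$ as the supremum of the tangent planes of $f_b$ over the sublevel set $\{f_b<\e^{-1}\}$, then mollifies at scale $\e\om_\e^{-1}$ where $\om_\e=\sup_{\{f_b<\e^{-1}\}}|Df_b|$ is the Lipschitz constant, and finally subtracts $\e$ to restore the one-sided bound. Your construction instead passes through the Moreau envelope $g^\e$, trading the Lipschitz estimate for the universal $C^{1,1}$ bound $\|D^2 g^\e\|\leq 1/\e$; this decouples the mollification scale $\delta(\e)\leq\e$ from the growth rate of $f_b$ near $\pa\BQ_{phy}$. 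That is the main gain of your version: the paper must track the $f_b$-dependent constant $\om_\e$ and shrink the mollification radius accordingly, while you get an $f_b$-independent quadratic control on the upward drift $g^\e\ast\rho_\delta-g^\e\leq\delta^2/(2\e)$, so the subtraction constant $C_0$ is universal. Conversely, the paper's $F_b^\e$ needs no Moreau machinery and agrees \emph{identically} with $f_b$ on a large sublevel set, which makes the locally uniform convergence of $F_b^\e$ and its derivative essentially automatic; in your approach you must additionally record that the proximal point satisfies $R^\e(Q)\to Q$ locally uniformly and that $Dg^\e(Q)=Df_b(R^\e(Q))$, which you invoke correctly. One harmless misstatement: ``taking any fixed interior $R$ gives $g^\e\geq 1$'' should instead say that $g^\e\geq 1$ because $f_b\geq 1$ and the quadratic penalty is nonnegative (taking a fixed $R$ in the infimum can only give an \emph{upper} bound); the conclusion and its subsequent use are nevertheless right.
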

Its proof will be given in Appendix \ref{section: construction of f_b eps}.

Given $\e>0$, let $Q^\e$ denote the unique minimizer of
\beqo
E^\e[Q]=\int_\Om f_e(\nabla Q)+f_b^\e(Q)\,dx
\eeqo
in $H^1(\Omega, \mathcal{Q})$ subject to the Dirichlet boundary condition \eqref{admissible}.
Indeed, existence and uniqueness of $Q^\e$ can be justified by classic arguments. We omit the details.

Concerning $Q^\e$ and $Q$, we have the following lemma.
\begin{lemma}\label{converge}
For any $\d>0$, up to a subsequence, $Q^\e\ri Q$ in $H^{2-\d}_{loc}(\Om)$.
\begin{proof}
The argument is similar to the proof of Theorem \ref{thm: small A results}.

Since $f_b^{\e}$ are convex,
Proposition \ref{prop: higher interior regularity} applies to $Q^\e$ with uniform-in-$\e$ interior $H^2$-bound.
This implies that there exists $Q_*$ such that up to a subsequence, $Q^\e\ri Q_*$ strongly in $H^{2-\d}_{loc}(\Omega)$.
We may assume that $Q^\e\ri Q_*$, and $\nabla Q^\e\ri \nabla Q_*$ almost everywhere in $\Om$.
Hence,
\beqo
f_b^\e(Q^\e)\ri f_b(Q_*),\,f_e(\nabla Q^\e)\ri f_e(\nabla Q_*),\text{ a.e.\;in }\Om.
\eeqo
In justifying the first convergence, we need that $f_b^\e\ri f_b$ locally uniformly in $\BQ^{\mathrm{o}}_{phy}$.
By the minimality of $Q^\e$ and the assumption $f_b^\e(Q) \leq f_b(Q)$,
\beqo 
E^\e[Q^\e] = \int_\Om f_e(\nabla Q^\e)+f_b^\e(Q^\e)\,dx\leq \int_\Om f_e(\nabla Q)+f_b^\e(Q)\,dx\leq E[Q].
\eeqo
By Fatou's Lemma,
\beqo
\int_\Om f_e(\nabla Q_*)+f_b(Q_*)\,dx\leq E[Q].
\eeqo
Hence, $Q_*=Q$ by the uniqueness of minimizer, which completes the proof.
\end{proof}

\end{lemma}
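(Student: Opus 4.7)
The plan is to carry out a standard variational compactness argument, essentially parallel to the compactness step already used in the proof of Theorem \ref{thm: small A results}. The three ingredients I need are: (i) a uniform-in-$\e$ interior regularity bound for $Q^\e$, (ii) extraction of an a.e.\ convergent subsequence, and (iii) identification of the limit with $Q$ via energy minimality and uniqueness.

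For (i), I would apply Proposition \ref{prop: higher interior regularity} to $Q^\e$. This is justified because Lemma \ref{lemma: construction of f_b eps} guarantees that $f_b^\e$ is convex and smooth in all of $\BQ$, which is all the proposition uses about the bulk potential (its proof discards $f_b$ via a convex combination). The resulting $H^2_{loc}$ estimate depends only on $\|\nabla Q^\e\|_{L^2(\Om)}$ and on $A$, and the former is controlled uniformly in $\e$ by the energy chain $E^\e[Q^\e]\leq E^\e[Q]\leq E[Q]<\infty$, where the last inequality uses $f_b^\e\leq f_b$ from the same lemma together with the coercivity of $f_e$. By Rellich--Kondrachov (compact embedding $H^2_{loc}\hookrightarrow H^{2-\d}_{loc}$ for $\d>0$), a subsequence converges strongly to some $Q_*\in H^{2-\d}_{loc}(\Om)$; along a further subsequence both $Q^\e$ and $\nabla Q^\e$ converge a.e.

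For (iii), I want $Q_*=Q$. Since $f_e$ is continuous in $\nabla Q$, a.e.\ convergence of $\nabla Q^\e$ yields $f_e(\nabla Q^\e)\ri f_e(\nabla Q_*)$ a.e. For the bulk term, at a.e.\ point where $Q_*(x)\in\BQ^{\mathrm{o}}_{phy}$, the locally uniform convergence $f_b^\e\ri f_b$ from Lemma \ref{lemma: construction of f_b eps} gives $f_b^\e(Q^\e(x))\ri f_b(Q_*(x))$; at the remaining points the integrand is simply nonnegative. Combining these with $E^\e[Q^\e]\leq E[Q]$, Fatou's lemma produces $E[Q_*]\leq E[Q]$. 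Since $Q_*$ inherits the Dirichlet datum from $Q^\e$ via weak $H^1$ trace continuity, $Q_*\in\mathcal{A}$, and uniqueness of the minimizer of $E$ in $\mathcal{A}$ forces $Q_*=Q$.

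The main obstacle I expect is the partial breakdown of pointwise control on $f_b^\e(Q^\e)$ near the obstacle: the convergence $f_b^\e\ri f_b$ supplied by Lemma \ref{lemma: construction of f_b eps} is only locally uniform in $\BQ^{\mathrm{o}}_{phy}$, and offers no information at points where $Q_*$ might touch $\pa\BQ_{phy}$. The resolution is that Fatou requires only a lower bound on the limiting integrand, and nonnegativity of $f_b^\e$ suffices on the bad set; moreover, once $E[Q_*]<\infty$ is obtained, assumption \eqref{assumption for f_b: infinity} retroactively forces $Q_*(x)\in\BQ^{\mathrm{o}}_{phy}$ a.e., so the contact ambiguity is automatically negligible.
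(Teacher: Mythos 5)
Your argument is essentially the paper's: uniform-in-$\e$ interior $H^2$ bound from Proposition \ref{prop: higher interior regularity} (valid since each $f_b^\e$ is convex and $\|\nabla Q^\e\|_{L^2(\Om)}$ is uniformly controlled by $E^\e[Q^\e]\le E[Q]$), compactness giving a.e.\ convergence of $Q^\e$ and $\nabla Q^\e$ on a subsequence, Fatou, and then identification by uniqueness. The remark that $Q_*$ inherits the Dirichlet datum through weak $H^1$ trace continuity is a genuinely needed step that the paper leaves implicit, and you are right to insert it.

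The one place I'd push back is your resolution of the Fatou step on the set where $Q_*(x)$ hits $\pa\BQ_{phy}$. Using only nonnegativity of $f_b^\e$ there, Fatou yields
\[
\int_\Omega f_e(\nabla Q_*)\,dx \;+\; \int_{\{Q_*\in\BQ^{\mathrm{o}}_{phy}\}} f_b(Q_*)\,dx \;\le\; E[Q],
\]
which is strictly weaker than $E[Q_*]\le E[Q]$ and does \emph{not} establish $E[Q_*]<\infty$; the "once $E[Q_*]<\infty$ is obtained" clause is therefore circular and you cannot retroactively conclude the bad set is null. What is actually needed (and what the paper's terse claim that $f_b^\e(Q^\e)\to f_b(Q_*)$ a.e.\ is implicitly relying on) is the pointwise statement $\liminf_{\e\to 0} f_b^\e(Q^\e(x)) = +\infty$ whenever $Q_*(x)\notin\BQ^{\mathrm{o}}_{phy}$. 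This does follow from the construction in Appendix \ref{section: construction of f_b eps}: $F_b^\e$ is nondecreasing as $\e\downarrow 0$, each $F_b^{\e_0}$ is finite and continuous, so $\liminf_\e F_b^\e(Q^\e(x))\ge F_b^{\e_0}(Q_*(x))$ for every fixed $\e_0$; and the subgradient inequality for $f_b$ shows $\sup_{\e_0}F_b^{\e_0}(Q)=+\infty$ for $Q\notin\BQ^{\mathrm{o}}_{phy}$, while $f_b^\e\ge F_b^\e-2\e$. With that lemma in hand, Fatou delivers $E[Q_*]\le E[Q]$ directly, $Q_*\in H^1(\Omega,\BQ_{phy})$ a.e.\ follows from finiteness, and uniqueness closes the argument.
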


Now we are ready to derive an estimate for $f_b(Q)$.

\begin{lemma}\label{lemma: estimate for f_b}
Assume $A>-\f35$ and $p\in [1,p(A))$.
Suppose $f_b$ satisfies assumptions \eqref{assumption for f_b: smooth and convex}-\eqref{assumption for f_b: infinity}.

\begin{enumerate}
  \item If $f_b$ additionally satisfies \eqref{eqn: stronger assumptions inverse power case}, then for any $V\subset\subset \Om$,
\beq
\int_{V} |\na Q|^{2p-2}\f{|\na f_b(Q)|^2}{f_b(Q)} \,dx\leq C(V,A,p,E[Q],c_s).
\label{eqn: bound on gradient f_b}
\eeq

\item If $f_b$ additionally satisfies \eqref{eqn: stronger assumptions by EKT log case}, then for any $V\subset\subset \Om$,
\beq
\int_{V} |\na Q|^{2p-2}|\na f_b|^2 \,dx\leq C(V,A,p,E[Q],c_0).
\label{eqn: bound on gradient f_b log case}
\eeq
\end{enumerate}
\begin{proof}

Up to minor adaptations, arguments in this proof are in the same spirit as those in the proof of Proposition \ref{prop: higher interior regularity}.
See also \cite[Theorem 4.3]{Evans2016partial}.

\setcounter{step}{0}
\begin{step}\label{step: derive variation inequality}
Let $V\subset\subset W\subset\subset \Om$ and $\xi$ be defined as in the proof of Proposition \ref{prop: higher interior regularity}.
Similar to the proof of Proposition \ref{prop: higher interior regularity}, define
$$
\tilde{u}^\e_k = D_k^{-h}(\xi^2 |D^h Q^\e|^{2p-2}D_k^h Q^\e),
$$
where
$$
D^h Q^\e : = (D^h_1 Q^\e,D^h_2 Q^\e,D^h_3 Q^\e)^\intercal,\quad |D^h Q^\e|^2 = \sum_{m = 1}^3 |D^h_m Q^\e|^2.
$$
Since $Q^\e\in H^2_{loc}(\Om)$ , $\tilde{u}_k^\e \in H^1_0(\Om, \BQ)$ for all $p\geq 1$.
By the minimality of $Q^\e$, we argue as before to obtain that
$$
\int_\Om \f{\pa f_e(\na Q^\e)}{\pa Q_{ij,l}}\pa_l \tilde{u}^\e_{k,ij} + D f_b^\e(Q^\e)\cdot \tilde{ u}^\e_k\,dx = 0,
$$
which gives
\beq
\int_\Om D_k^h\left(\f{\pa f_e(\na Q^\e)}{\pa Q_{ij,l}}\right)\pa_l (\xi^2 |D^h Q^\e|^{2p-2}D_k^h Q_{ij}^\e) + D_k^h (D f_b^\e(Q^\e))(\xi^2 |D^h Q^\e|^{2p-2}D_k^h Q^\e)  \,dx= 0.
\label{eqn: variational inequality in epsilon problem}
\eeq
By Lemma \ref{converge}, it is easy to justify that up to a subsequence, as $\e\to 0$,
\beq
\begin{split}
&\int_\Om D_k^h\left(\f{\pa f_e(\na Q^\e)}{\pa Q_{ij,l}}\right)\pa_l (\xi^2 |D^h Q^\e|^{2p-2}D_k^h Q_{ij}^\e)\,dx \\
&\quad \to
\int_\Om D_k^h\left(\f{\pa f_e(\na Q)}{\pa Q_{ij,l}}\right)\pa_l (\xi^2 |D^h Q|^{2p-2}D_k^h Q_{ij})\,dx.
\end{split}
\label{eqn: convergence of elastic energy}
\eeq
On the other hand, since $f_b^\e$ are convex, for all $w_1,w_2\in \BQ^{\mathrm{o}}_{phy}$,
$$
(Df_b^\e(w_1)-Df_b^\e(w_2))\cdot (w_1-w_2)\geq 0,
$$
which implies that
$$
\xi^2 D_k^h(Df_b^\e(Q^\e))D_k^hQ^\e \geq 0.
$$
It is assumed that $Df_b^\e\to Df_b$ locally uniformly in $\BQ^{\mathrm{o}}_{phy}$, and by Lemma \ref{converge}, up to a subsequence $Q^\e \to Q$ pointwise.
Hence,
$$
D_k^h(Df_b^\e(Q^\e))\to D_k^h(Df_b(Q))\quad \mbox{a.e.\;in }\Om.
$$
By Fatou's Lemma, for that subsequence,
$$
\int_\Om D_k^h (D f_b(Q))(\xi^2 |D^h Q|^{2p-2}D_k^h Q)  \,dx \leq \lim_{\e\to 0^+} \int_\Om D_k^h (D f_b^\e(Q^\e))(\xi^2 |D^h Q^\e|^{2p-2}D_k^h Q^\e)  \,dx.
$$
This together with \eqref{eqn: variational inequality in epsilon problem} and \eqref{eqn: convergence of elastic energy} implies that
$$
\int_\Om D_k^h\left(\f{\pa f_e(\na Q)}{\pa Q_{ij,l}}\right)\pa_l (\xi^2 |D^h Q|^{2p-2}D_k^h Q_{ij}) + D_k^h (D f_b(Q))(\xi^2 |D^h Q|^{2p-2}D_k^h Q)  \,dx\leq 0,
$$
\end{step}

\begin{step}\label{step: derive interior estimate}
Using the form of $f_e$ in \eqref{elastic}, we rewrite the inequality above as 
\beqo
\begin{split}
&\;\int_\Om  \xi^2 |D^h Q|^{2p-2}\cdot D_k^h(Q_{ij,l}+\d_{jl}A(\di Q)_i)\cdot D_k^hQ_{ij,l}\,dx\\
&\; +\int_\Om \xi^2 |D^h Q|^{2p-2}\cdot D_k^h (D f_b(Q))D_k^h Q  \,dx\\
\leq &\;-\int_\Om D_k^h(Q_{ij,l}+\d_{jl}A(\di Q)_i)D_k^h Q_{ij}\cdot 2\xi \pa_l \xi |D^h Q|^{2p-2}\,dx\\
&\;-(2p-2)\int_\Om D_k^h(Q_{ij,l}+\d_{jl}A(\di Q)_i)D_k^h Q_{ij}\cdot \xi^2 |D^h Q|^{2p-4}(\pa_lD^h Q:D^h Q)\,dx.
\end{split}
\eeqo
Here the summation convention applies to all repeated indices, including $k$.
With $p\geq 1$ and $A\geq -\f35$, we derive that
\beq
\begin{split}
&\;\int_\Om  \xi^2 |D^h Q|^{2p-2} |D^h\na Q|^2\,dx +\int_\Om \xi^2 |D^h Q|^{2p-2}\cdot D_k^h (D f_b(Q))D_k^h Q  \,dx\\
\leq &\;C\int_\Om |\xi||\na \xi| |D^h Q|^{2p-1}|D^h\na Q|\,dx+C\int_\Om \xi^2|D_k^h\na  Q||D_k^h Q| |D^h Q|^{2p-3}|D^h \na Q|\,dx\\
\end{split}
\label{eqn: estimate with bulk energy full finite difference}
\eeq
If $p\geq 2$,
$$
|D_k^h\na  Q||D_k^h Q||D^h Q|^{2p-3}|D^h \na Q| \leq C(|D_k^h\na  Q||D_k^h Q|^{p-1})^{\f{1}{p-1}}(|D^h \na Q||D^h Q|^{p-1})^{\f{2p-3}{p-1}}.
$$
Otherwise,
$$
|D_k^h\na  Q||D_k^h Q||D^h Q|^{2p-3}|D^h \na Q| \leq C|D_k^h\na  Q||D_k^h Q|^{p-1}\cdot |D^h\na  Q||D^h Q|^{p-1}.
$$
In either case, applying Young's inequality to the right hand side of \eqref{eqn: estimate with bulk energy full finite difference}, we obtain that

\beqo
\begin{split}
&\;\int_\Om  \xi^2 |D^h Q|^{2p-2} |D^h\na Q|^2\,dx +\int_\Om \xi^2 |D^h Q|^{2p-2}\cdot D_k^h (D f_b(Q))D_k^h Q  \,dx\\
\leq &\;C\int_\Om |\na \xi|^2 |D^h Q|^{2p}\,dx+C\int_\Om \xi^2|D_k^h\na  Q|^2 |D_k^h Q|^{2p-2}\,dx.
\end{split}
\eeqo
By \eqref{imp3}, for $p\in [1,p(A))$,
\beq
\int_\Om  \xi^2 |D^h Q|^{2p-2} |D^h\na Q|^2\,dx +\int_\Om \xi^2 |D^h Q|^{2p-2}\cdot D_k^h (D f_b(Q))D_k^h Q  \,dx\leq C,
\label{eqn: estimate with bulk energy crude form}
\eeq
where $C = C(V,A,p,E[Q])$.
\end{step}

\begin{step}
\label{step: derive bound for gradient of f_b}

Take an arbitrary $\d\ll 1$
and let $\Om_\d$ be defined in \eqref{eqn: def of lower level set of distance}.
Since $Q$ is H\"{o}lder continuous on $\overline{W}$ by Lemma \ref{converge}, $Q$ is separated away from $\pa \BQ_{phy}$ on an $h$-neighborhood of $V\backslash\Om_\d$ provided that $h\ll 1$.
Thus $Q$ is smooth in this neighborhood, which means on $V\backslash\Om_\d$,
$$
D^h Q\to \na Q,\quad D_k^h (D f_b(Q))\to \pa_k (D f_b(Q)).
$$
By \eqref{eqn: estimate with bulk energy crude form} and dominated convergence theorem, for any $\d\ll 1$,
\beq
\int_{V\backslash\Om_\d} |\na Q|^{2p-2}\pa_k (D f_b(Q))\pa_k Q  \,dx\leq C.
\label{eqn: bound on gradient f_b crude form}
\eeq
To this end, if we assume \eqref{eqn: stronger assumptions inverse power case}, on $V\backslash\Om_\d$,
$$
\pa_k(Df_b(Q))\pa_k Q = \f{\pa^2 f_b}{\pa Q_{ij}\pa Q_{mn}} \pa_k Q_{ij}\pa_k Q_{mn}\geq c_s \f{|Df_b\cdot \na Q|^2}{f_b(Q)} = \f{c_s |\na f_b(Q)|^2}{f_b(Q)}.
$$
Combining this with \eqref{eqn: bound on gradient f_b crude form} and sending $\d\to 0$, we obtain \eqref{eqn: bound on gradient f_b}.
\eqref{eqn: bound on gradient f_b log case} can be derived similarly if \eqref{eqn: stronger assumptions by EKT log case} is assumed.
\end{step}
\end{proof}
\end{lemma}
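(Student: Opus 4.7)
The plan is to imitate the proof of Proposition \ref{prop: higher interior regularity}, but this time to \emph{retain} rather than discard the bulk term from $f_b$, and then to convert the resulting quantity into $|\na f_b|^2/f_b$ (resp.\ $|\na f_b|^2$) by the second-derivative hypothesis \eqref{eqn: stronger assumptions inverse power case} (resp.\ \eqref{eqn: stronger assumptions by EKT log case}). Since $f_b$ itself is singular on $\pa\BQ_{phy}$, I would work first on the smooth approximations $(Q^\e,f_b^\e)$ provided by Lemma \ref{lemma: construction of f_b eps} and Lemma \ref{converge}, and only at the end pass to the limit.

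Concretely, with $V\subset\subset W\subset\subset\Om$ and the same cutoff $\xi$ as in the proof of Proposition \ref{prop: higher interior regularity}, I would test the Euler--Lagrange equation of $E^\e$ against $\tilde u_k^\e=D_k^{-h}(\xi^2|D^h Q^\e|^{2p-2}D_k^h Q^\e)$ and sum in $k$. The elastic part is controlled exactly as before, via Cauchy--Schwarz, Young's inequality and \eqref{keyinequality}; for $p<p(A)$ it absorbs a small multiple of $\int\xi^2|D^h Q^\e|^{2p-2}|D^h\na Q^\e|^2$ and leaves an $h$-independent upper bound in terms of $\|\na Q\|_{L^{2p}}$, finite by Proposition \ref{prop: higher interior regularity}. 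The bulk contribution, $\int\xi^2|D^h Q^\e|^{2p-2}\,D_k^h(Df_b^\e(Q^\e))\cdot D_k^h Q^\e$, is \emph{nonnegative} by convexity of $f_b^\e$, so one keeps it on the left and obtains a uniform bound on it.

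Next I would let $\e\to 0$. Lemma \ref{converge} gives $H^{2-\d}_{loc}$ convergence of $Q^\e$ to $Q$ (hence a.e.\ pointwise), and $Df_b^\e\to Df_b$ locally uniformly on $\BQ^{\mathrm{o}}_{phy}$; combined with the nonnegativity of the integrand, Fatou's lemma yields the analogous bound with $f_b$ in place of $f_b^\e$ and $Q$ in place of $Q^\e$. To convert finite differences to genuine derivatives, I would fix $\d>0$ and restrict to $V\setminus\Om_\d$; on this set $d(Q)\geq\d$, and because $Q\in C^\alpha_{loc}$ (Proposition \ref{prop: higher interior regularity} plus Sobolev), for $h\ll 1$ the $h$-neighborhood of $V\setminus\Om_\d$ still lies in $\BQ^{\mathrm{o}}_{phy}$ at positive distance from $\pa\BQ_{phy}$, where $f_b\circ Q$ is smooth. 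Dominated convergence then gives $\int_{V\setminus\Om_\d}|\na Q|^{2p-2}\,\pa_k(Df_b(Q))\,\pa_k Q\,dx\leq C$, and monotone convergence as $\d\to 0$ extends this to all of $V$. The pointwise inequality $\pa_k(Df_b(Q))\pa_k Q\geq c_s|\na f_b(Q)|^2/f_b(Q)$ from \eqref{eqn: stronger assumptions inverse power case} now yields \eqref{eqn: bound on gradient f_b}, and \eqref{eqn: stronger assumptions by EKT log case} produces \eqref{eqn: bound on gradient f_b log case} in the same way.

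The main obstacle I anticipate is that the limits $\e\to 0$ and $h\to 0$ cannot be taken simultaneously: $Df_b(Q)$ blows up near the contact set, so before reducing to actual derivatives one must already have eliminated the regularization and secured the sign of the bulk term. The resolution is the ordering just described: Fatou for $\e\to 0$ using the convexity-induced nonnegativity, then dominated convergence for $h\to 0$ on the safe sub-level set $V\setminus\Om_\d$ where $Q$ is separated from $\pa\BQ_{phy}$, and finally monotone convergence as $\d\to 0$.
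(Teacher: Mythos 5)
Your proposal is essentially the paper's own proof: same test function $\tilde u_k^\varepsilon$, same use of convexity of $f_b^\varepsilon$ to secure nonnegativity of the bulk contribution, same $\varepsilon\to 0$ by Fatou via Lemma \ref{converge}, same reduction to $V\setminus\Om_\delta$ where $Q$ is separated from $\partial\BQ_{phy}$ to pass $h\to 0$ by dominated convergence, and same final appeal to \eqref{eqn: stronger assumptions inverse power case} (resp.\ \eqref{eqn: stronger assumptions by EKT log case}). The only difference is a harmless reordering: the paper passes $\varepsilon\to 0$ in the variational inequality first and then runs the Cauchy--Schwarz/Young estimate with $Q$ directly, whereas you run that estimate on $Q^\varepsilon$ (uniformly in $\varepsilon$, which works since Proposition \ref{prop: higher interior regularity} applies to $Q^\varepsilon$ with constants independent of $\varepsilon$) and then let $\varepsilon\to 0$.
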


Recall that in the proof of Theorem \ref{thm1}, we used the distance function $d(Q)$ in the construction of the comparison configuration $Q_a$.
In the proof of Theorem \ref{thm: improved dimension bound}, we would like the comparison configuration to depend more explicitly on $f_b$ so that the estimate in Lemma \ref{lemma: estimate for f_b} may be used.
For this purpose, we need the following technical construction.

Let us first consider the case where $f_b$ satisfies the assumptions \eqref{assumption for f_b: growth} with \eqref{bulk} and \eqref{assumption for f_b: gradient bound} with \eqref{eqn: gradient bound inverse power case} and \eqref{eqn: stronger assumptions inverse power case}. 
With $m_s$ given in \eqref{bulk}, define $\tilde{\eta}_a:[0,\infty)\ri\mathbb{R}_+$ such that
\begin{enumerate}
\item $\tilde{\eta}_a(y)$ is a decreasing $C^1$-function on $[0,+\infty)$.
\item
\beq
\tilde{\eta}_a(y)\begin{cases}
=1,& \mbox{ if }y\leq \f{m_s}{2a^s},\\
\leq \min\left\{1,\frac{1-\sqrt{6}a}{1-\sqrt{6}
\left(\f{m_s}{y}\right)^{\f{1}{s}}}\right\},&\mbox{ if }y\in \left[ \f{m_s}{2a^s},\f{2m_s}{a^s}\right],\\
=\frac{1-\sqrt{6}a}{1-\sqrt{6}
\left(\f{m_s}{y}\right)^{\f{1}{s}}},&\mbox{ if }y\geq \f{2m_s}{a^s}.
\end{cases}
\label{eqn: def of eta_a}
\eeq
\item For all $y\geq \f{m_s}{2a^s}$,
\beq
|\tilde{\eta}'_a(y)|\leq \f{Cm_s}{s}y^{-\f{1}{s}-1}.
\label{eqn: derivative of eta_a}
\eeq
\end{enumerate}
It is not difficult to construct such $\tilde{\eta}_a$.
Roughly speaking, $\tilde{\eta}_a(y)$ can be viewed as a smoothed version of
$$
\min\left\{1, \frac{1-\sqrt{6}a}{1-\sqrt{6}\left(\f{m_s}{y}\right)^{\f{1}{s}}}\right\}.
$$
When $a\ll 1$, it is easy to verify that $|1-\tilde{\eta}_a| \leq Ca$ and $\tilde{\eta}_a\circ f_b$ is $C^1$ on $\BQ^{\mathrm{o}}_{phy}$ with bounded gradient.
Indeed,
combining \eqref{eqn: def of eta_a}, \eqref{eqn: derivative of eta_a} with \eqref{bulk} and \eqref{eqn: gradient bound inverse power case}, we deduce that 
for all $w\in \BQ^{\mathrm{o}}_{phy}$,
\beq
|\tilde{\eta}_a'(f_b(w))\cdot Df_b(w)|\leq \f{Cm_s}{s}\cdot C_s^{\f{1}{s}}.
\label{eqn: deriveative of eta_a composed with f_b}
\eeq
Define $\tilde{h}_a(Q):= \tilde{\eta}_a(f_b(Q))Q$.
It can be verified that when $a\ll 1$,
\begin{itemize}
\item
$\tilde{h}_a(Q) \equiv Q$ if $d(Q)\geq \Lam_s a$.

This directly follows from \eqref{bulk} and \eqref{eqn: def of eta_a}.
\item
$\tilde{h}_a$ retracts $\BQ_{phy}$ to a smaller subset of $\BQ^\mathrm{o}_{phy}$, such that $d(\tilde{h}_a(Q))\geq a$.
Indeed, by Lemma \ref{lemma: characterization of d(Q)} (in particular, \eqref{formula for d(Q)}),
$$
d(\tilde{h}_a(Q)) = \f{\sqrt{6}}{2}\left(\tilde{\eta}_a(f_b(Q))\lambda_1(Q)+\f13\right) = \f{\sqrt{6}}{6}-\tilde{\eta}_a(f_b(Q))\left(\f{\sqrt{6}}{6}-d(Q)\right).
$$
In order to show $d(\tilde{h}_a(Q))\geq a$, it suffices to notice that
$$
\tilde{\eta}_a(f_b(Q))\leq \f{1-\sqrt{6}a}{1-\sqrt{6}d(Q)},
$$
which is true because of \eqref{bulk} and \eqref{eqn: def of eta_a}.

\item $\tilde{h}_a(Q)\in H^1(\Om)$.
This follows from $C^1$-regularity of $\tilde{\eta}_a\circ f_b$ and \eqref{eqn: deriveative of eta_a composed with f_b}.

\end{itemize}

To this end, we are ready to show part \eqref{part: inverse power case} of Theorem \ref{thm: improved dimension bound}.

\begin{proof}[Proof of part \eqref{part: inverse power case} of Theorem \ref{thm: improved dimension bound}]
We proceed as in the proof of Theorem \ref{thm1} with minor modifications.
\setcounter{step}{0}
\begin{step}
Let $U\subset\subset V\subset\subset \Omega$ and a cut-off function $\r$ be defined as in the proof of Theorem \ref{thm1}.

Let
$$
\tilde{Q}_a(x) = \rho \tilde{h}_a(Q) + (1-\rho)Q.
$$
By the argument above, $\tilde{Q}_a\in \mathcal{A}$ when $a\ll 1$.

We shall use $\tilde{Q}_a$ as a comparison map in \eqref{energy}.
By the minimality of $Q$, 
\beq\label{eqn: comparing tilde Q_a and Q}
\int_{\Om}f_b(Q(x))-f_b(\tilde{Q}_a(x))\,dx\leq \int_{\Om}f_e(\nabla \tilde{Q}_a(x))-f_e(\nabla Q(x))\,dx.
\eeq
With $a\ll 1$, we may assume that $f_b(0)\leq f_b(Q)$ for any $Q\in \BQ^{\mathrm{o}}_{phy}$ with $d(Q) \leq\Lam_s a$.
For the same reason as in the proof of Theorem \ref{thm1}, $f_b(Q)\geq f_b(\tilde{Q}_a)$ for all $x\in \Omega$.
Hence, we can still show that
\beq
\int_{\Om}f_b(Q(x))-f_b(\tilde{Q}_a(x))\,dx
\geq  \frac{m_s}{2}\cdot  \left(\frac{a}{\Lam_s}\right)^{-s}|\Omega_{a/\Lam_s}\cap U|,
\label{eqn: lower bound of LHS with tilde Q a}
\eeq
where $\Lam_s$ is defined in \eqref{eqn: def of Lambda s}.

\end{step}
\begin{step}
On the right hand side of \eqref{eqn: comparing tilde Q_a and Q}, 
since $\tilde{Q}_a\equiv Q$ outside $\Om_{\Lam_s a}\cap V$,
\beq
\begin{split}
&\;\left|\int_{\Om}f_e(\nabla \tilde{Q}_a(x))-f_e(\nabla Q(x))\,dx\right|\\
\leq &\; C\int_{\Omega_{\Lam_s a}\cap V}|\nabla (\tilde{Q}_a-Q)|^2 + |\nabla Q||\nabla (\tilde{Q}_a-Q)|\,dx\\
\leq &\; C\int_{\Omega_{\Lam_s a}\cap V}a|\na Q|^2 +a|\na \r|^2+|\na Q||\na (\tilde{\eta}_a(f_b(Q)))|\,dx.
\end{split}
\label{eqn: bound elastic energy change}
\eeq
Here we used the definition of $\tilde{h}_a(Q)$ to derive that
$$
|\nabla (\tilde{Q}_a-Q)|\leq C(a|\na Q|+a|\na \r|+|\na (\tilde{\eta}_a(f_b(Q)))|).
$$

Now we proceed in two different cases.

\setcounter{case}{0}
\begin{case}\label{case: p can not be big}
Suppose $p(A)\leq 2$, i.e., we may only take $p< 2$ in Lemma \ref{lemma: estimate for f_b}.
Thanks to \eqref{eqn: derivative of eta_a}, \eqref{eqn: deriveative of eta_a composed with f_b} and \eqref{eqn: bound elastic energy change}, with $q = 6p$ and $\d\ll 1$,
\beqo
\begin{split}
&\;\left|\int_{\Om}f_e(\nabla \tilde{Q}_a(x))-f_e(\nabla Q(x))\,dx\right|\\
\leq &\; Ca|\Om_{\Lam_s a}\cap V|^{1-\f{2}{q}}+ C\int_{\Omega_\d\cap V}|\na Q|^2\,dx+ C\int_{(\Omega_{\Lam_s a}\backslash \Om_\d)\cap V}|\na Q||\tilde{\eta}_a'(f_b)||\na f_b|\,dx\\
\leq &\; Ca|\Om_{\Lam_s a}\cap V|^{1-\f{2}{q}}+ C|\Om_\d \cap V|^{1-\f2q}+ C\int_{(\Omega_{\Lam_s a}\backslash \Om_\d)\cap V}|\na Q| \cdot f_b^{-(1+\f1s)}|\na f_b|\,dx.
\end{split}
\eeqo
Combining this with \eqref{eqn: comparing tilde Q_a and Q}, \eqref{eqn: lower bound of LHS with tilde Q a} and the assumption \eqref{assumption for f_b: growth}, and sending $\d\to 0$, 
\beqo
|\Omega_{a/\Lam_s}\cap U|
\leq  Ca^{1+s}|\Om_{\Lam_s a}\cap V|^{1-\f{1}{3p}}
+ Ca^{1+\f{3s}{2}}\int_{\Omega_{\Lam_s a}\cap V}|\na Q| \cdot f_b^{-\f12}|\na f_b|\,dx.
\eeqo
By Proposition \ref{prop: higher interior regularity}, Lemma \ref{lemma: estimate for f_b} and
H\"{o}lder's inequality,
\beq
\begin{split}
&\;\int_{\Omega_{\Lam_s a}\cap V}|\na Q| \cdot f_b^{-\f12}|\na f_b|\,dx\\
\leq &\;
C\|\na Q\|_{L^{6p}(V)}^{2-p}\left\||\na Q|^{p-1} \cdot f_b^{-\f12}|\na f_b|\right\|_{L^2(V)}
|\Omega_{\Lam_s a}\cap V|^{\f{1}{2}-\f{2-p}{6p}}\\
\leq &\; C|\Omega_{\Lam_s a}\cap V|^{\f{1}{2}-\f{2-p}{6p}}.
\end{split}
\eeq
Hence,
\beqo
|\Omega_{a/\Lam_s}\cap U|
\leq  Ca^{1+s}|\Om_{\Lam_s a}\cap V|^{1-\f{1}{3p}}
+ Ca^{1+\f{3s}{2}}|\Omega_{\Lam_s a}\cap V|^{\f{1}{2}-\f{2-p}{6p}}.
\eeqo
By a boot-strapping argument similar to the one in the proof of Theorem \ref{thm1}, we can show that for any
\beqo
\beta < \f{3p(A)(3s+2)}{2(p(A)+1)},
\eeqo
and any $V\subset \subset \Om$, we have $|\Omega_a\cap V|\leq Ca^\beta$ for all $a\ll 1$.
Hence, \eqref{eqn: improved dimension bound inverse power case} can be proved for the case $p(A)\leq 2$ by arguing as in the proof of Theorem \ref{thm1}.
\end{case}

\begin{case}
Now suppose $p(A)>2$, i.e., we may take $p \geq 2$ in Lemma \ref{lemma: estimate for f_b}.
Again by the assumption \eqref{assumption for f_b: growth} on $f_b$, \eqref{eqn: derivative of eta_a}, \eqref{eqn: deriveative of eta_a composed with f_b} and \eqref{eqn: bound elastic energy change},
\beqo
\begin{split}
&\;\left|\int_{\Om}f_e(\nabla \tilde{Q}_a(x))-f_e(\nabla Q(x))\,dx\right|\\
\leq &\; Ca|\Om_{\Lam_s a}\cap V|^{1-\f{2}{q}}+ C\int_{\Omega_\d\cap V}|\na Q|^2\,dx+ C\int_{(\Omega_{\Lam_s a}\backslash \Om_\d)\cap V}|\na Q|^{2-\f2p}(|\tilde{\eta}_a'(f_b)||\na f_b|)^{\f2p}\,dx\\
\leq &\; Ca|\Om_{\Lam_s a}\cap V|^{1-\f{2}{q}}+ C|\Om_\d \cap V|^{1-\f2q}+ Ca^{\f{2+s}{p}}\int_{(\Omega_{\Lam_s a}\backslash \Om_\d)\cap V}|\na Q|^{2-\f2p} \left(f_b^{-\f12}|\na f_b|\right)^{\f2p}\,dx.
\end{split}
\eeqo
Combining this with \eqref{eqn: comparing tilde Q_a and Q} and \eqref{eqn: lower bound of LHS with tilde Q a}, sending $\d\to 0$, and applying H\"{o}lder's inequality as before, we find that
\beqo
|\Om_{a/\Lam_s}\cap U|
\leq Ca^{1+s}|\Om_{\Lam_s a}\cap V|^{1-\f{1}{3p}}+ Ca^{s+\f{2+s}{p}}|\Om_{\Lam_s a}\cap V|^{1-\f1p}.
\eeqo
Arguing as in Case \ref{case: p can not be big}, for any
$$
\beta < 2+s+sp(A),
$$
and any $V\subset \subset \Om$, we have $|\Omega_a\cap V|\leq Ca^\beta$ for all $a\ll 1$.
Then \eqref{eqn: improved dimension bound inverse power case} can be proved as before for the case $p(A)>2$.
\end{case}
\end{step}
\end{proof}

Next, we consider the case when $f_b$ satisfies the assumptions \eqref{assumption for f_b: growth} with \eqref{eqn: log growth non-uniform} and \eqref{assumption for f_b: gradient bound} with \eqref{eqn: stronger assumptions by EKT log case} and \eqref{eqn: gradient bound log case}.
The argument is almost parallel.

%
%
%

Take $a\ll 1$.
Let $\Lambda_0 := \exp(1+k_0^{-1}(M_0-m_0))$.
Take $\hat{\eta}_a$ as a decreasing $C^1$-function on $[0,+\infty)$,
\beqo
\hat{\eta}_a(y)=
\begin{cases}
  1, & \mbox{if } y\leq  k_0|\ln a|+m_0- \exp(m_0/k_0),\\
  \frac{1-\sqrt{6}\Lambda_0 a}{1-\sqrt{6}a}, & \mbox{if } y\geq k_0|\ln a|+m_0.
\end{cases}
\eeqo
and for $y\in [k_0|\ln a|+m_0 -\exp(m_0/k_0), k_0|\ln a|+m_0]$, $\hat{\eta}_a$ needs to satisfy
\beq
|\hat{\eta}_a'(y)|\leq 4\Lam_0 \exp(-k_0^{-1}y).
\label{eqn: derivative bound for eta}
\eeq
This is achievable since for $a\ll 1$,
\beqo
\exp(m_0/k_0)\cdot 4\Lam_0 \exp(-k_0^{-1}(k_0|\ln a|+m_0))> 1-\f{1-\sqrt{6}\Lam_0 a}{1-\sqrt{6}a}.
\eeqo
We also note that \eqref{eqn: derivative bound for eta} implies
\beq
\hat{\eta}_a'\leq Ca
\label{eqn: derivative bound for eta simplified}
\eeq
for some universal constant $C$.

We claim that
\begin{enumerate}
  \item
  $\hat{\eta}_a\circ f_b$ is Lipschitz continuous in $\mathcal{Q}_{phy}^{\mathrm{o}}$.
  Indeed, by \eqref{eqn: gradient bound log case} and \eqref{eqn: derivative bound for eta},
  \beqo
  |D(\hat{\eta}_a \circ f_b)| \leq |\hat{\eta}_a'(f_b)| |Df_b|\leq 4\Lam_0 C_0.
  \eeqo

  \item $|1-\hat{\eta}_a|\leq C a$ for some universal $C>0$, thanks to the monotonicity of $\hat{\eta}_a$ and $a\ll 1$.
  \item Whenever $d(Q)\leq a$,
  \beq
  f_b(Q) - f_b(\hat{\eta}_a\circ f_b(Q)Q)\geq k_0.
  \label{eqn: f_b get decreased}
  \eeq
  Indeed, by \eqref{eqn: log growth non-uniform} and \eqref{formula for d(Q)}, 
  \beqo
  \begin{split}
  &\;f_b(Q) -f_b(\hat{\eta}_a\circ f_b(Q)Q) \\
  \geq &\;k(Q)|\ln d(Q)|+m_0 - k(Q)|\ln d(\hat{\eta}_a\circ f_b(Q)Q)|-M_0\\
  \geq &\; k_0\ln \f{\hat{\eta}_a\circ f_b(Q)(d(Q)-\f{\sqrt{6}}{6})+\f{\sqrt{6}}{6}}{d(Q)}-(M_0-m_0).
  \end{split}
  \eeqo
  Since $d(Q)\leq a$ implies $f_b(Q)\geq k_0 |\ln a|+m_0$, we know that $\hat{\eta}_a  = \f{1-\sqrt{6}\Lam_0 a}{1-\sqrt{6}a}$.
  Hence,
 \beqo
  \begin{split}
  &\;f_b(Q) -f_b(\hat{\eta}_a\circ f_b(Q)Q) \\
  \geq &\; k_0\ln \f{\f{1-\sqrt{6}\Lam_0 a}{1-\sqrt{6}a}(a-\f{\sqrt{6}}{6})+\f{\sqrt{6}}{6}}{a}-(M_0-m_0)\\
  = &\; k_0\ln \Lam_0-(M_0-m_0)\\
  =&\;k_0.
  \end{split}
  \eeqo
  In particular, for arbitrary $\nu >1$, \eqref{eqn: f_b get decreased} holds when $f_b (Q)\geq \nu K_0|\ln a|\geq K_0|\ln a|+M_0$ as long as $a\ll 1$ (with the needed smallness depending on $\nu$, $K_0$ and $M_0$).
\end{enumerate}

Define $\hat{h}_a(Q):= \hat{\eta}_a(f_b(Q))Q$.
It is not difficult to justify that for $a\ll 1$,
\begin{itemize}
\item
$\hat{h}_a(Q) \equiv Q$ if $f_b(Q)\leq k_0|\ln a|+m_0-\exp(m_0/k_0)$. 
\item
$f_b(\hat{h}_a(Q))\leq f_b(Q)$. This follows from convexity of $f_b$.

\item $\hat{h}_a(Q)\in H^1(\Om)$.
This follows from $C^1$-regularity of $\hat{\eta}_a\circ f_b$ and boundedness of $\hat{\eta}_a'(f_b) \cdot Df_b$.
\end{itemize}

\begin{proof}[Proof of part \eqref{part: log case} of Theorem \ref{thm: improved dimension bound}]
Define $\hat{Q}_a$ as before, $\hat{Q}_a (x) = \r\hat{h}_a(Q)+(1-\r)Q$.
We still have
\beqo
\int_{\Om}f_b(Q(x))-f_b(\hat{Q}_a(x))\,dx\leq \int_{\Om}f_e(\nabla \hat{Q}_a(x))-f_e(\nabla Q(x))\,dx.
\eeqo
Denote
\beqo
S_y = \{x\in \Om:\;f_b(Q(x))\geq y\}.
\eeqo

For $a\ll 1$, by \eqref{eqn: f_b get decreased} and the remark following that,
\beqo
\int_{\Om}f_b(Q(x))-f_b(\hat{Q}_a(x))\,dx
\geq  k_0|S_{\nu K_0 |\ln a|}\cap U|.
\eeqo
On the other hand,
\beqo
\begin{split}
&\;\left|\int_{\Om}f_e(\nabla \hat{Q}_a(x))-f_e(\nabla Q(x))\,dx\right|\\
\leq &\; C\int_{S_{k_0|\ln a|+m_0-\exp(m_0/k_0)}\cap V}a|\na Q|^2 +a|\na \r|^2+|\na Q||\na (\hat{\eta}_a(f_b(Q)))|\,dx.
\end{split}
\eeqo
Take arbitrary $\theta < k_0/(\nu K_0)< 1$.
For $a\ll 1$ with the smallness depending on $\theta$, we have
$$
\theta (\nu K_0 |\ln a|)\leq k_0|\ln a|+m_0-\exp(m_0/k_0).
$$

Suppose $p(A)\leq 2$.
Arguing as before, we derive that for $a\ll 1$,
\beqo
\begin{split}
|S_{\nu K_0 |\ln a|}\cap U|
\leq &\; Ca|S_{\theta \nu K_0 |\ln a|}\cap V|^{1-\f{2}{q}}
+ Ca\int_{S_{\theta \nu K_0 |\ln a|}\cap V}|\na Q|^{2-p} \cdot |\na Q|^{p-1}|\na f_b|\,dx\\
\leq &\; Ca|S_{\theta \nu K_0 |\ln a|}\cap V|^{1-\f{1}{3p}}
+ Ca|S_{\theta \nu K_0 |\ln a|}\cap V|^{\f{1}{2}-\f{2-p}{6p}}.
\end{split}
\eeqo
Here we used \eqref{eqn: derivative bound for eta simplified}.
Hence,
\beqo
|S_{\nu K_0|\ln a|}\cap U|
\leq Ca|S_{\nu K_0|\ln a^\theta|}\cap V|^{\f{2}{3}-\f{1}{3p}}.
\eeqo
By boot-strapping, for any
\beqo
\beta < \sum_{j = 0}^\infty \left[\theta\left(\f23-\f{1}{3p(A)}\right)\right]^j = \left[1-\theta\left(\f23-\f{1}{3p(A)}\right)\right]^{-1},
\eeqo
and any $V\subset \subset \Om$, we have $|S_{\nu K_0 |\ln a|}\cap V|\leq Ca^\beta$ for all $a\ll 1$.
By \eqref{eqn: log growth non-uniform}, this implies
\beqo
|\Om_{\exp(k_0^{-1}(\nu K_0 \ln a+m_0))}\cap V|\leq Ca^\b,
\eeqo
or equivalently,
$|\Om_{a}\cap V|\leq Ca^{k_0\beta/(\nu K_0)}$,
with $C$ being a constant independent of $a$.
This leads to \eqref{eqn: improved dimension bound log case} in the case $p(A)\leq 2$.

Finally, suppose $p(A)>2$. 
Similarly,
\beqo
\begin{split}
|S_{\nu K_0 |\ln a|}\cap U|\leq &\; Ca|S_{\theta \nu K_0 |\ln a|}\cap V|^{1-\f{1}{3p}} + Ca^{\f{2}{p}}\int_{S_{\theta \nu K_0 |\ln a|}\cap V}|\na Q|^{2-\f2p} |\na f_b|^{\f2p}\,dx\\
\leq &\; Ca|S_{\theta \nu K_0 |\ln a|}\cap V|^{1-\f{1}{3p}}+ Ca^{\f{2}{p}}|S_{\theta \nu K_0 |\ln a|}\cap V|^{1-\f1p}\\
\leq &\; Ca^{\f{2}{p}}|S_{\nu K_0 |\ln a^\theta|}\cap V|^{1-\f1p}\\
\end{split}
\eeqo
Hence, for any
$\beta < 2/(p-\theta(p-1))$,
and any $V\subset \subset \Om$, we have $|\Omega_a\cap V|\leq Ca^{k_0\beta/(\nu K_0)}$ for all $a\ll 1$.
Then \eqref{eqn: improved dimension bound log case} for the case $p(A)>2$ follows.
\end{proof}

\section{Boundary Partial Regularity}
\label{section: boundary partial regularity}
This section is devoted to proving Theorem \ref{thm: boundary partial regularity}.
First we introduce some notations.
Let $x_0= (x_0^1, x_0^2, x_0^3)\in \mathbb{R}^3$ and $R>0$.
Define
\begin{align*}
B(x_0,R) =&\; \{x=(x^1,x^2,x^3)\in \BR^3: |x-x_0|<R\},\\
B^+(x_0,R) =&\; \{x\in B(x_0,R): x^3>x_0^3\},\\
\Gamma(x_0,R) = &\;B(x_0,R) \cap \{x^3 = x_0^3\},\\
Q_{x_0,R} =&\;\fint_{B^+(x_0,R)} Q(x)\,dx.
\end{align*}

In the special case $x_0=0$, we write them as $B_R$, $B_R^+$, $\Gamma_R$, and $Q_R$, respectively.

In order to prove $Q$ is H\"{o}lder continuous near some point $y_0\in \pa\Om$, we first use a smooth local diffeomorphism $\p^{-1}$ to a ball $U$ centered at $y_0$ to flatten the boundary, such that $y_0$, $U\cap \Om$, and $U\cap \pa \Om$ are mapped to $0$, $B_R^+$, and $\Gamma_R$, respectively.
Moreover, up to a rotation, we may assume that
\beq
\lim_{r\ri 0}\|\nabla (\p^{-1})-Id\|_{L^\infty(B_r^+)} =0.
\eeq
In other words, if we zoom in to smaller and smaller neighborhood of $y_0$, the deformation induced by $\p^{-1}$ is almost negligible, and $\p^{-1}$ behaves like an identity map.

To this end, under the change of variables $y = \p(x)$, define
\beq
\begin{split}
E[Q,U]:= &\;\int_U f_e(\nabla Q)+f_b(Q)\,dy\\
= &\;\int_{B_R^+}[f_e(\nabla (Q\circ \p)(x)(\nabla\p(x))^{-1})+f_b(Q\circ\p(x))]\det \nabla \p(x) \,dx.
\end{split}
\eeq
Hence, with abuse of notations, we turn to study minimizers of the following functional in a more general form
\beq\label{bdyfunc}
E[Q,B_R^+]=\int_{B_R^+} [f_e(J(x)\nabla Q)+f_b(Q)]g(x)\,dx,
\eeq
subject to Dirichlet data $Q = Q_0$ on $\pa B_R^+$ for some $Q_0\in C^\infty(\bar{B}_R, \BQ^\mathrm{o}_{phy})$.
Here $f_e$ is defined as in \eqref{elastic}, and $J(x):B_R^+\rightarrow \mathbb{R}^{3\times 3}$ is a smooth function, satisfying
\beq\label{eqn: assumption on J}
\lim_{r\rightarrow 0^+}\|J(x)-Id\|_{L^\infty(B_r^+)} = 0.
\eeq
$g(x):B_R^+\rightarrow \mathbb{R}$ is a smooth function, satisfying
\beq\label{eqn: assumption on g}
\lim_{r\rightarrow 0^+}\|g(x)-1\|_{L^\infty(B_r^+)} = 0.
\eeq
We also used the notation that $(J(x)\nabla Q)_{ij,k} \triangleq J_{kl}(x)\nabla_l Q_{ij}$.

For $x_0\in \G_R$ and $0<r<\mathrm{dist}(x_0,\pa\G_R)$, define an scaling-invariant quantity
\beqo
A_{x_0,r}=\f{1}{r}\int_{B^+(x_0,r)} |\na Q(x)|^2\,dx.
\eeqo
Denote $A_r:=A_{0,r}$.
Then we have
\begin{proposition}\label{pabdy}
Let $Q\in H^1(B_R^+, \BQ_{phy})$ be the unique minimizer of \eqref{bdyfunc} subject to smooth Dirichlet boundary condition $Q=Q_0$ on $\pa B_R^+$.
There exists $\e>0$, such that if for $x_0\in\G_R$, $\liminf_{r\ri 0}A_{x_0,r}< \e^2$, then $Q(x)$ is H\"{o}lder continuous in a neighborhood of $x_0$.
\end{proposition}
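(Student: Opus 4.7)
The plan is to establish a standard $\varepsilon$-regularity theorem by an excess decay argument, adapted from the Schoen--Uhlenbeck/Giaquinta partial regularity scheme to the flat boundary setting encoded in \eqref{bdyfunc}--\eqref{eqn: assumption on g}. The key quantity is the scale-invariant excess $A_{x_0,r}$, and the goal is to show that once it becomes small on some scale, it decays like a power of $r$, which then yields H\"older regularity via Campanato.

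First, I would prove a boundary Caccioppoli inequality. Using a radial cut-off $\zeta$ supported in $B^+(x_0,r)$ with $\zeta\equiv 1$ on $B^+(x_0,r/2)$, I test the minimality of $Q$ against the competitor $Q-\zeta^2(Q-\bar Q_0)$, where $\bar Q_0$ is a smooth extension of the boundary trace $Q_0$ into $B^+(x_0,r)$. Because $\bar Q_0$ lies strictly inside $\BQ^{\mathrm o}_{phy}$, convexity of $f_b$ (as used in Sections \ref{section: higher interior regularity} and \ref{section: proof of main theorem}) lets the bulk term be dropped, yielding
\[
\int_{B^+(x_0,r/2)}|\nabla Q|^2\,dx \le \frac{C}{r^2}\int_{B^+(x_0,r)}|Q-\bar Q_0|^2\,dx + C\int_{B^+(x_0,r)}|\nabla\bar Q_0|^2\,dx.
\]

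Second, I would carry out a harmonic comparison. Let $h$ solve the constant-coefficient linearized system $-\Delta h - A\nabla(\mathrm{div}\,h)=0$ in $B^+(x_0,r)$ with $h=Q$ on $\partial B^+(x_0,r)$; coercivity from $A>-3/5$ provides existence and uniqueness, and standard flat-boundary estimates for such systems give
\[
\int_{B^+(x_0,\theta r)}|\nabla h|^2\,dx \le C\theta^{3}\int_{B^+(x_0,r)}|\nabla h|^2\,dx,\qquad 0<\theta<1/2.
\]
To control $Q-h$, I compare energies using the convex combination $(1-\eta)Q+\eta h$ (combined with a retraction onto $\BQ_{phy}$ if necessary) so that convexity of $f_b$ again eliminates the bulk term. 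The errors produced by replacing the true functional by its constant-coefficient limit are governed by $\omega(r):=\|J-\mathrm{Id}\|_{L^\infty(B^+_r)}+\|g-1\|_{L^\infty(B^+_r)}$, which tends to $0$ as $r\to 0$ by \eqref{eqn: assumption on J}--\eqref{eqn: assumption on g}. The result is
\[
\int_{B^+(x_0,r)}|\nabla(Q-h)|^2\,dx \le C\omega(r)\int_{B^+(x_0,r)}|\nabla Q|^2\,dx + (\text{lower order}).
\]

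Third, combining the previous two estimates gives an iteration inequality
\[
A_{x_0,\theta r}\le C\bigl(\theta^{2}+\theta^{-1}\omega(r)\bigr)A_{x_0,r}+C\theta^{-1}r\|\nabla Q_0\|_{L^\infty}^2.
\]
Fixing $\theta$ small to make $C\theta^{2}<\tfrac12$ and then restricting to scales where $\omega(r)$ is small (possible by hypothesis and \eqref{eqn: assumption on J}--\eqref{eqn: assumption on g}), the assumption $\liminf_{r\to 0}A_{x_0,r}<\varepsilon^2$ propagates to all sufficiently small scales and produces a geometric decay $A_{x_0,\rho}\le C\rho^{2\alpha}$ for some $\alpha>0$. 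Together with an interior Caccioppoli and a Poincar\'e inequality, this is a boundary Morrey estimate; Campanato's characterization then yields $Q\in C^{0,\alpha}$ in a half-neighborhood of $x_0$, which after undoing the flattening diffeomorphism $\varphi$ gives H\"older continuity of $Q$ near the original boundary point.

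The main obstacle is step two: constructing a legitimate competitor close to the linear comparison map $h$ while respecting the hard obstacle $\BQ_{phy}$. The linear solution $h$ need not take values in $\BQ_{phy}$, so one cannot simply insert it into the energy. The fix I expect to use is to project $h$ back to a slight interior shrinkage of $\BQ_{phy}$ via a smooth retraction (similar in spirit to the maps $h_a$ and $\tilde h_a$ of Section \ref{section: proof of the improved bound}), incurring an additional error controlled by the measure of the set where $h$ exits $\BQ_{phy}$; this set has small measure when $A_{x_0,r}$ is small and $Q_0$ is separated from the obstacle. A related subtlety is the presence of the variable coefficients $J(x)$ and weight $g(x)$ in \eqref{bdyfunc}, whose deviation from the constants $\mathrm{Id}$ and $1$ produces lower-order perturbations that have to be absorbed into $\omega(r)$ and into the $r$-dependent forcing term above.
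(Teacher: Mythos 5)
Your proposal takes the \emph{direct} excess-decay route (boundary Caccioppoli, comparison with the linearized constant-coefficient system, iteration to a Morrey estimate, Campanato), whereas the paper's proof of Proposition \ref{pabdy} follows the \emph{indirect} blow-up/compactness scheme of \cite{evans1987blowup}: it proves Lemma \ref{smallene} by contradiction, rescaling $Q_i=\e_i^{-1}(Q(r_ix)-Q_{r_i})$, establishing strong $H^1_{loc}$-convergence (Lemma \ref{H1strong}), showing the limit $\tl Q$ solves the frozen-coefficient linear system (Lemma \ref{minlemmma}), and invoking its regularity and excess decay to derive a contradiction; Proposition \ref{pabdy} then follows by iterating Lemma \ref{smallene}. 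Both are classical methods in partial-regularity theory, and a working direct argument would be a genuine (and arguably more quantitative) alternative. The crucial difference is where the obstacle and singular potential enter: in the paper's scheme the rescaled comparison maps are $O(\e_i)$ perturbations around $Q_{r_i}\ri Q_0(0)\in\BQ^{\mathrm o}_{phy}$, so admissibility and finiteness of $f_b$ are automatic for $i$ large; your direct scheme has to construct an admissible competitor near the linear comparison map $h$ \emph{at a fixed, non-infinitesimal scale}, and this is precisely where your argument has a gap.

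Concretely, the competitor in step two must equal $Q$ on all of $\pa B^+(x_0,r)$ (including the spherical cap $\pa B_r\cap\{x^3>x_0^3\}$, where $Q$ is \emph{not} known to be separated from $\pa\BQ_{phy}$) and must satisfy $f_b(\cdot)<\infty$ a.e. The linear solution $h$ takes values in all of $\BQ$, and any attempt to repair it runs into one of two problems. If you use the nearest-point projection onto $\BQ_{phy}$ (which is $1$-Lipschitz since $\BQ_{phy}$ is convex, so the elastic energy is under control and the boundary values match), then on the escape set the projected map lands exactly on $\pa\BQ_{phy}$, where $f_b=+\infty$, and the bulk term cannot be handled by convexity or dropped. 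If instead you use an interior retraction such as $h_a$ or $\tl h_a$ from Sections \ref{section: proof of main theorem}--\ref{section: proof of the improved bound}, these move every point with $d(Q)\le a$ strictly inward, so the retracted map no longer agrees with $Q$ on the spherical cap wherever $d(Q)<a$ there, breaking admissibility. The convex combination $(1-\eta)Q+\eta h$ is of no help either: $f_b((1-\eta)Q+\eta h)\le(1-\eta)f_b(Q)+\eta f_b(h)$ is useless when $f_b(h)=+\infty$, and the combination itself exits $\BQ_{phy}$ wherever $h$ does (the set $\BQ_{phy}$ is convex, but $h$ need not take values in it). A Chebyshev bound on the measure of the escape set does not by itself control the resulting bulk energy. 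Some additional idea is needed to interpolate between $Q$ near $\pa B^+_r$ and a safely interior modification of $h$ in the core, and to show the interpolation error is absorbable; as written, the proposal does not supply it, and this is exactly the difficulty that the paper's indirect argument (and its technical Lemma \ref{lemma: construction of F_i}, which handles the analogous boundary-matching issue after rescaling) is designed to avoid. The remaining steps of your plan (the Caccioppoli inequality with competitor $Q-\zeta^2(Q-\bar Q_0)$, which is a legitimate convex combination inside $\BQ_{phy}$; the treatment of $\omega(r)$; the Morrey-to-H\"older step; the undoing of the flattening map) are sound.
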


Theorem \ref{thm: boundary partial regularity} follows immediately from Proposition \ref{pabdy}.

\begin{proof}[Proof of Theorem \ref{thm: boundary partial regularity}]
The diffeomorphism $\psi^{-1}$ for flattening the boundary is smooth, and it is sufficiently close to an identity map (up to a rotation in general) if we only consider sufficiently small boundary patches.
Hence, a statement similar to Proposition \ref{pabdy} holds for the minimizer of \eqref{energy} in $\Omega$ with curved boundary.
This together with a classic covering argument implies that the minimizer of \eqref{energy} is H\"{o}lder continuous up to $\partial\Om\backslash S$, with $S\subset \partial \Om$ satisfying $\mathcal{H}^1(S) = 0$.

Since $Q_0(x)\in \BQ^\mathrm{o}_{phy}$ for all $x\in \partial\Om$, by the continuity, $\overline{\mathcal{C}}\cap \partial \Om \subset S$.
This completes the proof.
\end{proof}

The rest of this section is devoted to proving Proposition \ref{pabdy}.
The proof closely follows the classical variational proof in \cite{evans1987blowup} and the proof of interior partial regularity in \cite{Evans2016partial}, with necessary modifications to handle the boundary data.
In what follows, without loss of generality, we shall assume $x_0 = 0$.
We first prove a useful lemma.

\begin{lemma}\label{Lebesguepoint}
For all $r\in (0,R)$,
\beqo
|Q_{r}-Q_0(0)|\leq C\sqrt{A_r+r^2},
\eeqo
where $C$ is a constant only depending on $\|\nabla Q_0\|_{L^\infty (B_r^+)}$.

\begin{proof}
Since $Q-Q_0 =0$ on $\G_R$, by Poincar\'{e} inequality on domains with finite width,
\beqo
\begin{split}
A_r\geq &\;\f{1}{2r}\int_{B_r^+}|\na (Q-Q_0)|^2\,dx-\f{1}{r}\int_{B_r^+}|\na Q_0|^2\,dx\\
\geq &\; \f{1}{2r^3}\int_{B_{r}^+}|Q-Q_0|^2\,dx-\f{1}{r}\int_{B_r^+}|\na Q_0|^2\,dx\\
\geq &\; \f{\pi}{3} |(Q-Q_0)_{r}|^2-\f{1}{r}\int_{B_r^+}|\na Q_0|^2\,dx.
\end{split}
\eeqo
Hence, by the smoothness of $Q_0$,
\beq\label{ineq}
|(Q-Q_0)_{r}|\leq C\sqrt{A_r+r^2}.
\eeq
In addition,
$$
|(Q_0)_r - Q_0(0)|\leq Cr.
$$
Then the desired estimate follows.
\end{proof}
\end{lemma}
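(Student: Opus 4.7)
The plan is to split $|Q_r - Q_0(0)|$ via the triangle inequality into $|(Q - Q_0)_r|$ and $|(Q_0)_r - Q_0(0)|$, and bound each piece separately. The second term is immediate from the smoothness of $Q_0$: it is at most $\|\nabla Q_0\|_{L^\infty(B_r^+)} \cdot r \leq Cr$. So the real task is to show $|(Q - Q_0)_r| \leq C\sqrt{A_r + r^2}$.

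For this, the key observation I would exploit is that $Q - Q_0 = 0$ on the flat face $\Gamma_r \subset \partial B_r^+$ in the trace sense, as a consequence of the Dirichlet condition on $\partial B_R^+$. This puts us in position to apply the half-ball Poincar\'e inequality for functions vanishing on a piece of the boundary, yielding
\[
\int_{B_r^+}|Q - Q_0|^2\,dx \leq C r^2 \int_{B_r^+}|\nabla(Q - Q_0)|^2\,dx,
\]
with the constant $C$ independent of $r$ (by scaling from the unit half-ball). Expanding the gradient with the triangle inequality and using the smoothness of $Q_0$ to bound $\int_{B_r^+}|\nabla Q_0|^2\,dx \leq C r^3$, the right-hand side is at most $2 r A_r + C r^3$. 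Then Jensen's inequality applied to the mean value gives
\[
|(Q - Q_0)_r|^2 \leq \frac{1}{|B_r^+|}\int_{B_r^+}|Q - Q_0|^2\,dx \leq C(A_r + r^2),
\]
from which the claim follows upon recombining with the elementary bound on $|(Q_0)_r - Q_0(0)|$.

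I do not expect any substantial obstacle. The essence is the half-ball Poincar\'e inequality, available here precisely because the trace condition on $\Gamma_R$ forces $Q - Q_0$ to vanish on every $\Gamma_r$. The only minor care needed is tracking the $r$-scaling of the Poincar\'e constant, which follows from a standard dilation argument, and verifying that the powers of $r$ coming from $|B_r^+| \sim r^3$ match on both sides so that $A_r + r^2$ (rather than $A_r/r^2 + 1$, say) appears under the square root.
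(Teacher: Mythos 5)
Your proposal is correct and follows essentially the same route as the paper: split $|Q_r - Q_0(0)|$ by the triangle inequality, bound $|(Q_0)_r - Q_0(0)|$ by $Cr$ via smoothness of $Q_0$, and bound $|(Q - Q_0)_r|$ via the half-ball Poincar\'e inequality (exploiting $Q - Q_0 = 0$ on $\Gamma_r$) together with Jensen and the $\int_{B_r^+}|\nabla Q_0|^2\,dx \leq Cr^3$ bound. The only cosmetic difference is the order of the inequality chain — you start from $\int_{B_r^+}|Q-Q_0|^2$ and bound it above by $A_r$, while the paper starts from $A_r$ and bounds it below — but the substance is identical.
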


To this end, we shall prove the so-called small energy regularity. 
\begin{lemma}\label{smallene}
Let $Q$ be the minimizer defined in Proposition \ref{pabdy}.
There exists $\theta\in(0,\f14)$ and $\e>0$, such that if
\beqo
r<\e,\quad A_{r}\leq \e^2,
\eeqo
then for any $x\in \Gamma_{r/4}$,
\beqo
A_{x,\theta r}\leq \f12 A_{r}.
\eeqo

\end{lemma}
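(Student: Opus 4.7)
The plan is a standard blow-up by contradiction, in the spirit of the interior partial regularity arguments of Evans and of \cite{Evans2016partial}, adapted to handle the flat boundary $\Gamma_R$ and the obstacle constraint. Fix $\theta\in(0,\tfrac14)$ to be chosen later. If the statement fails for every $\e>0$, one can extract sequences $r_k\to 0$, minimizers $Q_k$ of functionals of the form \eqref{bdyfunc} with coefficients $J_k$, $g_k$ satisfying \eqref{eqn: assumption on J}--\eqref{eqn: assumption on g}, and points $x_k\in\Gamma_{r_k/4}$, such that $\lam_k^2:=A_{r_k}(Q_k)\to 0$ while $A_{x_k,\theta r_k}(Q_k)>\tfrac12\lam_k^2$. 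Define the rescaled maps
\[
\tilde Q_k(y):=\frac{Q_k(r_k y)-(Q_k)_{r_k}}{\lam_k},\qquad y\in B_1^+,
\]
so that $\|\na\tilde Q_k\|_{L^2(B_1^+)}=1$ and $\int_{B_1^+}\tilde Q_k=0$. By Poincar\'e, $\tilde Q_k$ is bounded in $H^1(B_1^+)$, so (up to a subsequence) $\tilde Q_k\convw\tilde Q_\infty$ in $H^1$, strongly in $L^2$, and $y_k:=x_k/r_k\to y_\infty\in\overline{\Gamma_{1/4}}$.

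The next step is to identify the equation satisfied by $\tilde Q_\infty$. By Lemma \ref{Lebesguepoint}, $(Q_k)_{r_k}\to Q_0(0)\in\BQ^\mathrm{o}_{phy}$, so for large $k$ small perturbations of $Q_k$ by $\lam_k\phi(\cdot/r_k)$, with $\phi\in C_c^\infty(B_1^+\cup\Gamma_1,\BQ)$, remain inside $\BQ_{phy}$ and are thus admissible. Testing against such variations in the minimality of $Q_k$, discarding the bulk term using convexity of $f_b$ exactly as in Step 1 of the proof of Proposition \ref{prop: higher interior regularity}, and sending $k\to\infty$ (using $J_k\to I$, $g_k\to 1$), one obtains that $\tilde Q_\infty$ solves the constant-coefficient linear system
\[
-\Delta\tilde Q_\infty-A\,\na\di\tilde Q_\infty=0\quad\text{in }B_1^+,
\]
with boundary data on $\Gamma_1$ equal to the limit of $\lam_k^{-1}(Q_0(r_k y)-(Q_k)_{r_k})$, which is smooth (in fact at most affine, of controlled size because $r_k\le \lam_k$ may be assumed after normalization). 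Coercivity from $A>-\tfrac35$ makes the system uniformly elliptic, so boundary Schauder estimates on the flat part give $\tilde Q_\infty\in C^{1,\al}(B_{1/2}^+\cup\Gamma_{1/2})$ with a bound on $\|\na\tilde Q_\infty\|_{L^\infty(B_{1/2}^+)}$ depending only on $A$ and the $H^1$-norm. Consequently, uniformly in $y\in\overline{\Gamma_{1/4}}$,
\[
A_{y,\theta}(\tilde Q_\infty)\le C\theta^{2}.
\]
Choose $\theta$ so small that $C\theta^{2}<\tfrac14$.

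It remains to contradict the hypothesis $A_{y_k,\theta}(\tilde Q_k)>\tfrac12$. This would follow from strong $L^2$-convergence of $\na\tilde Q_k$ in a neighborhood of $y_\infty$, and this upgrade from weak to strong convergence is the main obstacle. The plan is a harmonic-replacement argument: let $v_k$ solve the limiting linear system on $B_{3/4}^+$ with the same trace as $\tilde Q_k$ on $\partial B_{3/4}^+$, and build a competitor $\hat Q_k$ for $Q_k$ by pulling $v_k$ back to the scale of $Q_k$ and gluing it to $Q_k$ outside $B_{r_k/2}^+(0)$ with a smooth cut-off. To keep $\hat Q_k\in\mathcal{A}$, rely on the convex-combination trick in \eqref{eqn: the candidate}, which exploits that $(Q_k)_{r_k}$ lies strictly inside $\BQ_{phy}$; convexity of $f_b$ then ensures that replacing $Q_k$ by $\hat Q_k$ does not increase the bulk energy. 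Comparing elastic energies via the minimality of $Q_k$ and using the quadratic structure of $f_e$, one obtains $\|\na(\tilde Q_k-v_k)\|_{L^2(B_{3/4}^+)}\to 0$. Since $v_k\to\tilde Q_\infty$ strongly in $H^1(B_{3/4}^+)$ by classical theory for linear systems with converging data, strong $L^2$-convergence of $\na\tilde Q_k$ follows, giving $A_{y_\infty,\theta}(\tilde Q_\infty)\ge\tfrac12$ and the desired contradiction. The most delicate point in carrying this out is ensuring admissibility of the competitor near possible contact points of $Q_k$ with $\pa\BQ_{phy}$, which is handled by the convex-combination construction above combined with the a priori separation of $(Q_k)_{r_k}$ from $\pa\BQ_{phy}$.
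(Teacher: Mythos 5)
Your overall strategy coincides with the paper's: blow up by contradiction, extract a weak $H^1$-limit $\tilde Q_\infty$ with the correct trace convergence on $\Gamma_1$, show the limit solves the constant-coefficient elliptic system, derive energy decay from boundary elliptic regularity and fix $\theta$ accordingly, then obtain the contradiction by upgrading the weak convergence of $\nabla\tilde Q_k$ to strong $L^2_{loc}$-convergence. The one genuinely different ingredient is your mechanism for the strong convergence: you propose harmonic replacement, whereas the paper (Lemmas~\ref{H1strong}--\ref{lemma: construction of F_i}) uses strict convexity of $f_e$ together with a competitor $G_i=\xi\tilde Q^i+(1-\xi)Q_i+F_i$, where $\tilde Q^i$ is a truncation of $\tilde Q$ at level $\e_i^{-1/2}$ and $F_i$ is a boundary correction on $\Gamma_R$. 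Your choice of boundary data for $v_k$ (agreement with $\tilde Q_k$ on all of $\partial B_{3/4}^+$, including $\Gamma_{3/4}$) elegantly sidesteps the need for $F_i$.

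The gap is in the admissibility of the harmonic-replacement competitor. The replacement $v_k$ is only controlled in $H^1(B_{3/4}^+)$; there is no pointwise bound, so $\lambda_k v_k(\cdot/r_k)+(Q_k)_{r_k}$ need not lie in $\BQ_{phy}$, and the competitor $\hat Q_k$ is then inadmissible in $\mathcal A$. Your appeal to the ``convex-combination trick in \eqref{eqn: the candidate}'' does not close this: that construction makes $Q+\e u_k$ a convex combination of $Q$ evaluated at nearby points, which are automatically in $\BQ_{phy}$; here $v_k$ is an entirely new map with no a priori relation to the constraint, and convexity of $\BQ_{phy}$ only helps once each endpoint of the interpolation is already admissible. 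The paper's truncation of $\tilde Q$ at level $\e_i^{-1/2}$ is precisely the device that repairs this (it makes $\e_i\tilde Q^i=O(\e_i^{1/2})\to 0$ pointwise while converging to $\tilde Q$ strongly in $H^1$); the analogue in your scheme would be to truncate $v_k$ at level $\lambda_k^{-1/2}$ before gluing, at the price that the truncated map no longer solves the system and one must estimate the energy loss. A second, more minor, point you gloss over: weak $H^1$-convergence of $\tilde Q_k$ only yields weak $H^{1/2}$-convergence of traces on a fixed $\partial B_{3/4}^+$, so the claimed strong $H^1$-convergence $v_k\to v_\infty$ is not automatic; one needs a Fubini-type choice of a good radius (the paper's analogue is the selection of $r$ with $\mu(\partial B_r\cap B_1^+)=0$). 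With the truncation inserted and the radius selection made precise, the harmonic-replacement route should go through.
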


We shall prove Lemma \ref{smallene} by contradiction.
Suppose the statement is false.
Then for a fixed $\theta\in (0,1/4)$ which we will determine later, there exists $\{(\e_i,r_i,x_i)\}_{i=1}^\infty$ such that
\beqo
\e_i\ri 0,\quad r_i\leq \e_i,\quad A_{r_i}\leq \e_i^2,\quad\mbox{ and }x_i\in \Gamma_{r_i/4},
\eeqo
while
$$
A_{x_i,\theta r_i}> \f12 A_{r_i}.
$$

Define
\beqo
Q_i(x)=\e_i^{-1}(Q(r_ix)-Q_{r_i}). 
\eeqo
It is straightforward to verify that
$$
\int_{B_1^+}|\na Q_i|^2\,dx\leq 1,\quad (Q_i)_1=0,
$$
and $Q_i$ minimizes 
\beq\label{eqn: rescaled local functional}
W_i[Q]=\int_{B_1^+}\left[f_e\left(\f{\e_i}{r_i}J(r_i x)\na Q\right)+f_b(\e_iQ+Q_{r_i})\right]g(r_i x)\,dx.
\eeq
Passing to a subsequence if necessary, there exists $x_*\in \overline{\Gamma_{1/4}}$, and $\tl{Q}\in H^1(B_1^+,\BQ)$, such that
\begin{itemize}
  \item $x_i/r_i\ri x_*$;
  \item $Q_i\rightharpoonup \tilde{Q}$ weakly in $H^1(B_1^+,\BQ)$, and strongly in $L^2(B_1^+,\BQ)$;
  \item $(\tl{Q})_1=0$;
  \item $Q_i|_{\G_{1}}\ri \tl{Q}|_{\G_1}$ in $C^2(\G_1)$. 
\end{itemize}
Indeed, it suffices to verify the last claim.
Note that by \eqref{ineq},
\beqo
|(Q-Q_0)_{r_i}|\leq C\sqrt{A_{r_i}+r_i^2}\leq C\e_i.
\eeqo
By the definition of $Q_i$, for $\forall\,x\in \Gamma_1$,
\beqo
|Q_i(x)|\leq \e_i^{-1}|{Q_0}(r_i x)-(Q_0)_{r_i}|+\e_i^{-1}|(Q_0-Q)_{r_i}|\leq C(\e_{i}^{-1} r_i+1)\leq C.
\eeqo
Moreover, for any $k\in \mathbb{N}$,
\beqo
|\na_{\G}^k Q_i(x)|\leq \e_i^{-1}r_i^k\|\na_{\G}^kQ_0\|_{L^\infty(\Gamma_1)}\leq C_k.
\eeqo
The convergence in $C^2(\Gamma_1)$ follows from Arzel\`{a}-Ascoli lemma.

The next lemma shows that the $H^1$-convergence of $Q_i$ to $\tilde{Q}$ is in fact in the strong sense in smaller boundary patches.

\begin{lemma}\label{H1strong}
$\na Q_i$ converges to $\na \tl{Q}$ strongly in $L^2(B_r^+,\BQ)$ for any $0<r<1$.
\end{lemma}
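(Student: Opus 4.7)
The plan is to upgrade the weak $H^1$-convergence $Q_i\rightharpoonup \tilde Q$ to strong convergence of the gradients on every sub-half-ball $B_r^+$ with $r<1$, via the minimality of $Q_i$ combined with a cut-off comparison function and the convexity of $f_e$ and $f_b$.

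Fix $0<r<s<1$ and pick $\eta\in C_c^\infty(B_s)$ with $\eta\equiv 1$ on $B_r$ and $0\le\eta\le 1$. Since $Q_i\to\tilde Q$ in $C^2(\Gamma_1)$, I will construct a small boundary-matching correction $\phi_i\in H^1(B_s^+)$, supported near $\Gamma_s$, with $\phi_i|_{\Gamma_s}=(Q_i-\tilde Q)|_{\Gamma_s}$ and $\|\phi_i\|_{H^1(B_s^+)}\to 0$. The comparison map is
\[\hat Q_i=(1-\eta)Q_i+\eta(\tilde Q+\phi_i),\]
which coincides with $Q_i$ on $\pa B_1^+$ (off $B_s$ because $\eta=0$, and on $\Gamma_s\subset\Gamma_1$ because $\tilde Q+\phi_i=Q_i$ by construction). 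To keep $\e_i\hat Q_i+Q_{r_i}\in \BQ_{phy}$, $\tilde Q$ is first truncated at a large level $M$ (removed at the end by diagonalization) while preserving the symmetric-traceless structure. Then
\[\e_i\hat Q_i+Q_{r_i}=(1-\eta)Q(r_ix)+\eta\bigl(\e_i(\tilde Q+\phi_i)+Q_{r_i}\bigr)\]
is, for $i$ large, a convex combination of elements of $\BQ_{phy}$ thanks to $Q_{r_i}\to Q_0(0)\in \BQ^{\mathrm{o}}_{phy}$ (Lemma \ref{Lebesguepoint}) and the convexity of $\BQ_{phy}$, hence $\hat Q_i$ is admissible.

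Applying $W_i[Q_i]\le W_i[\hat Q_i]$, using the quadratic homogeneity $f_e(\lambda M)=\lambda^2 f_e(M)$, and cancelling where $\hat Q_i=Q_i$ yield
\[\int_{B_s^+} f_e(J\nabla Q_i)g\,dx\le \int_{B_s^+} f_e(J\nabla\hat Q_i)g\,dx+\f{r_i^2}{\e_i^2}\int_{B_s^+}\bigl[f_b(\e_i\hat Q_i+Q_{r_i})-f_b(\e_i Q_i+Q_{r_i})\bigr]g\,dx.\]
Convexity of $f_b$ bounds the bulk integrand by $\eta\bigl[f_b(\e_i(\tilde Q+\phi_i)+Q_{r_i})-f_b(Q(r_ix))\bigr]$, in which the first term tends uniformly to $f_b(Q_0(0))$ as $\e_i\to 0$ since $\tilde Q\in L^\infty$. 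Expanding $f_e$ quadratically around $\nabla\tilde Q$ kills the cross terms by the weak convergence $\nabla Q_i\rightharpoonup \nabla\tilde Q$, and the identity $\nabla\hat Q_i=(1-\eta)\nabla Q_i+\eta\nabla\tilde Q$ modulo an $L^2$-vanishing error gives
\[\int_{B_s^+} f_e(J\nabla Q_i)g\,dx-\int_{B_s^+} f_e(J\nabla\hat Q_i)g\,dx=\int_{B_s^+}\bigl[1-(1-\eta)^2\bigr]f_e(J\nabla(Q_i-\tilde Q))g\,dx+o(1).\]
Combining with the minimality inequality, using $1-(1-\eta)^2\ge \eta\equiv 1$ on $B_r^+$ and the coercivity of $f_e$, reduces the claim to
\[c\int_{B_r^+} |\nabla(Q_i-\tilde Q)|^2\,dx\le o(1)+\f{r_i^2}{\e_i^2}\cdot (\text{bulk difference}).\]

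The main obstacle is showing that the bulk difference is $o(1)$ as $i\to\infty$. Although $f_b(\e_i(\tilde Q+\phi_i)+Q_{r_i})$ is uniformly controlled by the continuity of $f_b$ on a neighborhood of $Q_0(0)\in \BQ^{\mathrm{o}}_{phy}$, the term $f_b(Q(r_ix))$ may grow without bound on small subsets where $Q(y)$ approaches $\pa\BQ_{phy}$, and the prefactor $r_i^2/\e_i^2$ only belongs to $(0,1]$. The resolution I envision exploits the integrability $f_b(Q)\in L^1(\Omega)$ together with a Lebesgue-type differentiation at the boundary point $0$: after the change of variables $y=r_ix$, one shows $f_b(Q(r_i\,\cdot))\to f_b(Q_0(0))$ in $L^1(B_s^+)$, making the bulk contribution itself $o(1)$. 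Once this technically most delicate step is in place, sending $M\to\infty$ by diagonalization removes the truncation and completes the proof.
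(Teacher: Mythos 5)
Your overall strategy is sound and matches the paper's at a structural level: rescale, build a cut-off comparison map that glues $Q_i$ near the lateral boundary to $\tilde{Q}$ in the interior, correct the mismatch on the flat part $\Gamma$, use minimality of $Q_i$ together with the quadratic structure of $f_e$ to isolate $\int[1-(1-\eta)^2]f_e(J\nabla(Q_i-\tilde{Q}))$, and control the bulk term. Your comparison map
$\hat Q_i=(1-\eta)Q_i+\eta(\tilde{Q}+\phi_i)$
is actually \emph{cleaner} than the paper's $G_i=(1-\xi)Q_i+\xi\tilde{Q}^i+F_i$: because the boundary correction $\phi_i$ sits under the factor $\eta$ and next to $\tilde{Q}$, admissibility $\e_i\hat Q_i+Q_{r_i}\in\BQ_{phy}$ follows immediately by convexity of $\BQ_{phy}$, and you avoid the paper's delicate multiplier $\phi(d(Q(r_ix)))$ in the definition of $F_i$ (Lemma \ref{lemma: construction of F_i}). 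You also bypass the Radon-measure/$\mu(\pa B_r)=0$ device by keeping the positive tail $[1-(1-\eta)^2]f_e\ge 0$ on $B_s^+\setminus B_r^+$.

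The genuine gap is exactly the step you flag as ``technically most delicate.'' Your proposed resolution --- that $f_b(Q(r_i\cdot))\to f_b(Q_0(0))$ in $L^1(B_s^+)$ by a Lebesgue-type differentiation --- does not hold up. After rescaling this is the statement that
$r_i^{-3}\int_{B(0,sr_i)\cap\Omega}|f_b(Q(y))-f_b(Q_0(0))|\,dy\to 0$,
i.e., that $0$ is a Lebesgue point of $f_b(Q)$ with value $f_b(Q_0(0))$. But $f_b(Q)$ is merely $L^1$, so this holds for a.e.\;point, not for the particular boundary point under consideration, and no a priori regularity of $Q$ at $0$ is available to justify it --- in fact, establishing it amounts to establishing precisely the kind of boundary regularity at $0$ that the blow-up argument is designed to prove, so the reasoning is circular as written.

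The fix is simpler than what you envision, and it is the point both you and (in a more elaborate form) the paper must exploit: after applying convexity of $f_b$ to the convex combination $\e_i\hat Q_i+Q_{r_i}=(1-\eta)Q(r_ix)+\eta(\e_i(\tilde{Q}+\phi_i)+Q_{r_i})$, the term $-\eta\,f_b(Q(r_ix))$ enters with a favorable sign, and you only need an upper bound. Concretely, Jensen's inequality with the probability weight $\eta g\,dx/\int\eta g$ gives
\begin{equation*}
\int_{B_s^+}\eta\,f_b(Q(r_ix))\,g\,dx \;\geq\; \left(\int\eta g\right) f_b\!\left(\frac{\int\eta\,Q(r_ix)\,g}{\int\eta g}\right),
\end{equation*}
and since $\int\eta\,Q(r_ix)\,g/\int\eta g=\e_i\big(\int\eta Q_i g/\int\eta g\big)+Q_{r_i}\to Q_0(0)\in\BQ_{phy}^{\mathrm{o}}$, the right side tends to $(\int\eta g)f_b(Q_0(0))$. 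Together with the uniform convergence $f_b(\e_i(\tilde{Q}+\phi_i)+Q_{r_i})\to f_b(Q_0(0))$ (after truncation), the bulk upper bound is $(r_i^2/\e_i^2)(\int\eta g)\cdot o(1)=o(1)$, using only $r_i\leq\e_i$. Alternatively, the paper's Lemma \ref{H1strong} splits $I_4$ and uses the same sign structure less explicitly: it picks a threshold $d_*$ so that wherever $d(\e_iQ_i+Q_{r_i})<d_*$ the integrand is nonpositive and can be discarded, and on the complementary region $\{d\geq d_*\}$ it bounds $Df_b$ and uses $L^2$-convergence. Either route closes the argument; neither is what you proposed. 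Also note the paper truncates $\tilde Q$ at level $1/\sqrt{\e_i}$ so no separate diagonalization over $M$ is needed, but your version works as well.
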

\begin{proof}
Define Radon measures $\mu_i$ as follows,
\beqo
\mu_i(D):=\int_D |\na Q_i|^2+|\na \tl{Q}|^2\,dx \text{ for any measurable set }D\subset B_1^+.
\eeqo
Up to a subsequence, there exists a Radon measure $\mu$ such that $\mu_i\rightharpoonup \mu$ in the sense of measures. For all but countably many $r\in(0,1]$, it holds that $\mu(\pa B_r\cap B_1^+)=0$.
It suffices to show
$$
\lim_{i\ri \infty}\int_{B_r^+}|\na Q_i-\na \tl{Q}|^2\,dx=0
$$
for any $r\in(0,1)$ such that $\mu(\pa B_r\cap B_1^+)=0$.

Since $f_e$ is strictly convex and quadratic in $\nabla Q$, there exists $\lambda>0$ such that
\begin{align*}
&\;\int_{B_r^+}\left[f_e(\na Q_i(x))-f_e(\na \tl{Q}(x))\right]dx\\
\geq &\;\int_{B_r^+}\left[Df_e(\na \tl{Q}(x))\cdot (\na Q_i-\na \tl{Q})\right]dx+\lambda \int_{B_r^+}|\na Q_i-\na \tl{Q}|^2\,dx.
\end{align*}
As $i\ri \infty$, the first term on the right hand side goes to $0$.
It suffices to prove
\beq\label{ine}
\limsup\limits_{i\ri \infty} \int_{B_r^+}f_e(\na Q_i(x))-f_e(\na \tl{Q}(x))\,dx\leq 0.
\eeq
We shall use the minimality of $Q_i$ to show this.
Take $R\in (r,1)$ and let $\xi(x)$ be a smooth cut-off function such that
\beq
0\leq \xi\leq 1, \quad \xi\equiv 0 \text{ in }\BR^3\backslash B_R,\quad \xi\equiv 1\text{ in }B_r, \quad\mbox{ and } |\na \xi|\leq \f{2}{R-r}.
\eeq
Then we define $\{\tl{Q}^i\}$ as truncations of $\tl{Q}$ at magnitude $\f{1}{\sqrt{\e_i}}$:
\beqo
\tl{Q}^i(x)=\begin{cases}
\tl{Q}(x), & \mbox{ if }|\tl{Q}(x)|\leq \f{1}{\sqrt{\e_i}},\\
\tl{Q}(x)\f{1}{\sqrt{\e_i}|\tl{Q}(x)|}, &\mbox{ if } |\tl{Q}(x)|>\f{1}{\sqrt{\e_i}}.
\end{cases}
\eeqo
It is straightforward to verify that $\tl{Q}^i\ri \tl{Q}$ strongly in $H^1(B_1^+)$.
Define
$$
P_i=\xi \tl{Q}^i+(1-\xi) Q_i.
$$
However, one can not use $P_i$ as a comparison in \eqref{eqn: rescaled local functional}, since $P_i$ and $Q_i$ do not agree on $\G_R$.
We thus need the following technical lemma to make a correction.
\end{proof}

\begin{lemma}\label{lemma: construction of F_i}
Given $Q_i|_{\G_{1}}\ri Q|_{\G_1}$ in $C^2(\G_1)$, for any $R<1$, there exists a sequence of functions $\{F_i\}\subset H^1(B_R^+)$ such that
\begin{itemize}
  \item $F_i=\xi(Q_i-\tilde{Q})$ on $\Gamma_R$, $\mathrm{supp}\, F_i\subset B_R^+$, and $F_i\ri 0$ in $H^1(B_R^+)$;
  \item $\|F_i\|_{L^\infty}\leq C$, where $C$ is independent of $i$;
  \item In addition,
  \beq\label{eqn: convergence of bulk energy under boundary correction}
  \lim_{i\ri\infty}\int_{B_R^+}|f_b(\e_i(Q_i+F_i)+Q_{r_i})-f_b(\e_iQ_i+Q_{r_i})| \,dx=0.
  \eeq
\end{itemize}

\begin{proof}
Since $\tilde{Q}\in C^2(\Gamma_1)$, we may assume $i$ is sufficiently large such that $\tilde{Q}^i|_{\G_1} = \tilde{Q}|_{\G_1}$.
Since $Q_i|_{\G_1}\ri \tilde{Q}|_{\G_1}$ in $C^2(\G_1)$, we can easily find $\tf_i$ (e.g., by making a constant extension in the $x_3$-direction and making a suitable smooth cutoff) such that
\beqo
\tf_i=\xi(Q_i-\tilde{Q})\text{ on }\G_R,\quad \mathrm{supp}\,\tf_i\subset \overline{B_R^+},\quad  \mbox{ and }\tf_i\ri 0 \text{ in }C^2(B_R^+).
\eeqo
By the assumption on the boundary data $Q_0$ on $\Gamma_1$, there exists a universal constant $\eta>0$ such that $d(Q(r_i x))>\eta$ for all $x\in \G_1$ and all $i$.

Define a piecewise-linear function $\phi(x): [0,\infty]\ri[0,1]$ as
\beqo
\phi(x)=\begin{cases}
1, &\mbox{ if }x\geq \eta;\\
\f{x-\eta/2}{\eta/2},& \mbox{ if }\eta/2\leq x<\eta;\\
0,&\mbox{ if }0 \leq x < \eta/2.
\end{cases}
\eeqo
Then we claim that $F_i(x):=\tf_i\cdot \phi(d(Q(r_ix)))$ has the desired properties.

Firstly, thanks to the property of boundary data $Q_0$, $F_i|_{\partial B_R^+} = \tilde{F}_i|_{\partial B_R^+}$.
It then suffices to verify that $F_i\ri 0$ in $H^1(B_R^+)$, and \eqref{eqn: convergence of bulk energy under boundary correction}.

For the $H^1$-convergence, we simply calculate that
\begin{align*}
\int_{B_R^+} |\na F_i|^2\,dx \leq & \;C\int_{B_R^+}\left[ |\na \tf_i|^2\|\phi\|_{L^\infty}^2+ |\na \phi(d(Q(r_ix)))|^2\|\tf_i\|_{L^\infty}^2\right]dx\\
\leq & \;C\int_{B_R^+}\left[|\na \tf_i|^2 +\f{1}{\eta^2}|\na Q(r_ix)|^2 \|\tilde{F}_i\|_{L^\infty}^2\right]dx\\
\leq & \;C\int_{B_R^+}|\na \tf_i|^2\,dx +C\eta^{-2}\|\tilde{F}_i\|_{L^\infty}^2\cdot A_{r_i}\\
\ri &\;0 \quad \text{ as }i\ri \infty,
\end{align*}
where we used the property $|\na d(Q)|\leq C|\na Q|$ in the second inequality (see Lemma \ref{lemma: d(Q) is Lipschitz}).

To show \eqref{eqn: convergence of bulk energy under boundary correction}, we note that $F_i = 0$ if $d(Q(r_i x)) = d(\e_i Q_i(x)+Q_{r_i})<\eta/2$.
On the other hand, by assuming $i$ to be sufficiently large and using the fact that $F_i$ are uniformly bounded, we have
$$
d(Q(r_i x)+\e_i F_i)\geq \eta/4\quad \mbox{ if } d(Q(r_i x))\geq \eta/2.
$$
Therefore, $f_b(\e_i(Q_i+F_i)+Q_{r_i})<+\infty$ when $f_b(\e_iQ_i+Q_{r_i})<+\infty$, and
\begin{align*}
&\;\int_{B_R^+}|f_b(\e_i(Q_i+F_i)+Q_{r_i})-f_b(\e_iQ_i+Q_{r_i})| \,dx\\
=&\;\int_{B_R^+\cap \{d(Q(r_ix))\geq \eta/2\}}|f_b(Q(r_i x)+\e_i F_i)-f_b(Q(r_ix))| \,dx\\
\leq &\;\int_{B_R^+\cap \{d(Q(r_ix))\geq \eta/2\}}|\e_i F_i| \sup_{\{Q:\,d(Q)\geq \eta/4\}}|Df_b|\,dx\\
\leq &\;C\e_i.
\end{align*}
This completes the proof.
\end{proof}
\end{lemma}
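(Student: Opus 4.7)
The plan is to build $F_i$ in two stages: first construct a naive extension $\tilde{F}_i$ of the boundary datum $\xi(Q_i-\tilde{Q})|_{\G_R}$ into $B_R^+$ going to zero in $C^2$, then multiply by a cutoff depending on $d(Q(r_i x))$ so that the correction is killed wherever the perturbation would risk pushing the configuration into a neighborhood of the singular obstacle $\pa\BQ_{phy}$.

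Since $\tilde{Q}\in C^2(\G_1)$ and $Q_i|_{\G_1}\ri \tilde{Q}|_{\G_1}$ in $C^2$, the restriction $\xi(Q_i-\tilde{Q})|_{\G_R}$ is $C^2$-small. I would extend it into $B_R^+$ by, for example, constant extension in the $x^3$-direction followed by a smooth cutoff in $x^3$ with support in $\overline{B_R^+}$. This yields $\tilde{F}_i$ with $\tilde{F}_i\to 0$ in $C^2(B_R^+)$; in particular $\|\tilde{F}_i\|_{L^\infty}\to 0$, so the uniform $L^\infty$ bound of $F_i$ will be inherited. What $\tilde{F}_i$ alone fails to guarantee is the bulk-energy convergence \eqref{eqn: convergence of bulk energy under boundary correction}, since without information about $d(Q(r_ix))$ inside $B_R^+$, adding $\e_i\tilde{F}_i$ to $Q_i$ could land arbitrarily close to $\pa\BQ_{phy}$ where $f_b$ blows up.

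The fix is to multiply by a cutoff based on $d$. Because $Q_0\in C^\infty(\overline{\Om},\BQ^{\mathrm{o}}_{phy})$ is strictly separated from the obstacle on $\overline{\Omega}$, there exists a universal $\eta>0$ such that $d(Q(r_i x))\geq \eta$ on $\G_1$ for all $i$. Let $\phi:[0,\infty)\to[0,1]$ be the piecewise-linear cutoff equal to $1$ on $[\eta,\infty)$, to $0$ on $[0,\eta/2]$, and linear on $[\eta/2,\eta]$, and define $F_i(x):=\tilde{F}_i(x)\,\phi(d(Q(r_i x)))$. On $\G_R$ we have $Q(r_i x)=Q_0(r_i x)$ with $d\geq \eta$, so $\phi\equiv 1$ there and $F_i=\tilde{F}_i=\xi(Q_i-\tilde{Q})$ as required; the support condition and the $L^\infty$ bound are immediate from $\tilde{F}_i$.

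For the $H^1$ claim, the product rule gives $\nabla F_i=\phi(d(Q(r_ix)))\nabla \tilde{F}_i+\tilde{F}_i\,\phi'(d(Q(r_ix)))\,\nabla[d(Q(r_ix))]$; the first term tends to zero in $L^2$ since $\tilde{F}_i\to 0$ in $C^2$, and the second is pointwise bounded by $C\eta^{-1}\|\tilde{F}_i\|_{L^\infty}\,r_i\,|\nabla Q(r_i x)|$ using Lemma \ref{lemma: d(Q) is Lipschitz}, whose $L^2(B_R^+)$ norm after rescaling is dominated by $C\eta^{-1}\|\tilde{F}_i\|_{L^\infty}^2 A_{r_i}\to 0$ thanks to $A_{r_i}\leq \e_i^2$. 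The main step, and indeed the whole purpose of the construction, is \eqref{eqn: convergence of bulk energy under boundary correction}: the cutoff guarantees that $F_i\neq 0$ forces $d(Q(r_ix))\geq \eta/2$, and since $\e_i\|\tilde{F}_i\|_{L^\infty}\leq \eta/4$ for $i$ large, the perturbed value $\e_i(Q_i+F_i)+Q_{r_i}=Q(r_ix)+\e_i F_i$ stays at distance $\geq \eta/4$ from $\pa\BQ_{phy}$. On the compact set $\{d(Q)\geq \eta/4\}\subset \BQ^{\mathrm{o}}_{phy}$, $f_b$ is smooth with bounded gradient, so the integrand in \eqref{eqn: convergence of bulk energy under boundary correction} is dominated pointwise by $C\e_i|F_i|$, and the integral is $O(\e_i)\to 0$. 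The main conceptual obstacle is calibrating $\phi$ so that it simultaneously (i) leaves $F_i=\tilde{F}_i$ on $\G_R$ to preserve the boundary match, (ii) cuts the correction off well before the singularity of $f_b$ is felt, and (iii) introduces only a mild extra gradient controlled by $A_{r_i}$.
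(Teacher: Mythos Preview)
Your proposal is correct and follows essentially the same approach as the paper: the same two-stage construction (a $C^2$-small extension $\tilde{F}_i$ of the boundary datum, multiplied by the same piecewise-linear cutoff $\phi$ in $d(Q(r_ix))$), the same use of Lemma~\ref{lemma: d(Q) is Lipschitz} together with $A_{r_i}\to 0$ for the $H^1$-convergence, and the same argument that the cutoff confines the perturbation to the region $\{d\geq \eta/2\}$ so that $f_b$ has bounded gradient there and the bulk-energy difference is $O(\e_i)$. The only cosmetic difference is that you make the chain-rule factor $r_i$ in $\nabla[d(Q(r_ix))]$ explicit, which the paper absorbs into its notation.
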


\begin{proof}[Proof of Lemma \ref{H1strong} (continued)]
To this end, we define
$$
G_i:= P_i+F_i = \xi \tl{Q}^i+(1-\xi)Q_i+F_i
$$
as a comparison in \eqref{eqn: rescaled local functional}.
Indeed, when $i$ is sufficiently large, it can be shown that $\e_i Q_i(x)+Q_{r_i} +\e_iF_i(x)= Q(r_i x)+\e_i F_i(x)\in \BQ_{phy}$, and $\e_i \tilde{Q}^i+Q_{r_i}+\e_iF_i\in \BQ_{phy}$ since $Q_{r_i}\to Q_0(0)$ by Lemma \ref{Lebesguepoint}.
This implies that $\e_i G_i+Q_{r_i}\in \BQ_{phy}$.
Thanks to the assumptions on $F_i$, it is also easy to verify that $G_i|_{\partial B_R^+} = Q_i|_{\partial B_R^+}$.
Hence, by the (local) minimality of $Q_i$,
\begin{align*}
&\;\int_{B_R^+}\left[f_e\left(\f{\e_i}{r_i}J(r_ix)\na Q_i\right)+f_b(\e_iQ_i+Q_{r_i})\right]g(r_i x)\,dx\\
&\;\leq \int_{B_R^+}\left[f_e\left(\f{\e_i}{r_i}J(r_ix)\na G_i\right)+f_b(\e_iG_i+Q_{r_i})\right]g(r_i x)\,dx.
\end{align*}
Since $f_e$ is quadratic and $P_i= \tilde{Q}^i$ on $B_r^+$, we may rewrite this inequality as
\beqo
\begin{split}
&\;\int_{B_r^+}\left[f_e(J(r_i x)\na Q_i)-f_e(J(r_i x)\na \tl{Q})\right]g(r_ix)\,dx\\
\leq &\;
\int_{B_R^+}\left[ f_e(J(r_i x)\na (P_i+F_i))-f_e(J(r_i x)\na P_i)\right]g(r_i x)\,dx\\
&\;+\int_{B_r^+}\left[f_e(J(r_ix)\na \tl{Q}^i)-f_e(J(r_ix)\na \tl{Q})\right]g(r_ix)\,dx\\
&\;+\int_{B_R^+\backslash B_r^+}[ f_e(J(r_ix)\na P_i)-f_e(J(r_ix)\na Q_i)]g(r_ix)\,dx\\
&\;+\f{r_i^2}{\e_i^2}\int_{B_R^+} [ f_b(\e_iG_i+Q_{r_i})-f_b(\e_iQ_i+Q_{r_i})]g(r_ix)\,dx\\
\triangleq &\;I_1+I_2+I_3+\e_i^{-2}r_i^2I_4.
\end{split}
\eeqo
Since $F_i\ri 0$ in $H^1(B_1^+)$ and $\tl{Q}^i\ri \tl{Q}$ in $H^1(B_1^+)$, we use \eqref{eqn: assumption on J} and \eqref{eqn: assumption on g} to derive that $I_1+I_2\ri 0$ as $i\ri\infty$.
For $I_3$, we calculate by \eqref{eqn: assumption on J} and \eqref{eqn: assumption on g} again that
\begin{align*}
I_3\leq &\;C\int_{B_R^+\backslash B_r^+ }(|\na P_i|+|\na Q_i|)|\na (P_i- Q_i)|\,dx\\
\leq &\;C\int_{B_R^+\backslash B_r^+ }\left[\f{1}{R-r}|\tl{Q}^i-Q_i||\na Q_i|+\f{1}{(R-r)^2}|\tl{Q}^i-Q_i|^2\right]dx+C\mu_i(B_R^+\backslash B_r^+).
\end{align*}
Hence,
$$
\limsup_{i \ri \infty}I_3 \leq C\mu(B_R^+\backslash B_r^+)\quad \text{as }i\ri \infty.
$$
Here we used $L^2$-convergence of $Q_i$ and $H^1$-convergence of $\tl{Q}^i$.
For $I_4$, by the convexity of $f_b$,
\begin{align*}
I_4\leq & \; \int_{B_R^+}(1-\xi)[f_b(\e_i(Q_i+F_i)+Q_{r_i})-f_b(\e_iQ_i+Q_{r_i})] g(r_ix) \,dx\\
&\;+\int_{B_R^+}\xi[ f_b(\e_i(\tl{Q}^i+F_i)+Q_{r_i})-f_b(\e_iQ_i+Q_{r_i})]g(r_ix) \,dx\\
=:&\; I_{4,1}+I_{4,2}.
\end{align*}
By the construction of $F_i$ and the boundedness of $g(r_ix)$, $I_{4,1}\ri 0$ as $i\ri \infty$.
For $I_{4,2}$, noting that $|\e_i(\tl{Q}^i+F_i)|\leq C\sqrt{\e_i}$ and $Q_{r_i}\ri Q_0(0)$ by Lemma \ref{Lebesguepoint}, when $i$ is sufficiently large, for all $x\in B_R^+$,
$$
d(\e_i(\tl{Q}^i+F_i)+Q_{r_i})\geq \frac{1}{2}d(Q_0(0))>0.
$$
Hence, we may take $d_*\in (0, d(Q_0(0))/2)$, such that if $d(\e_i Q_i+Q_{r_i})<d_*$,
$$
f_b(\e_i(\tl{Q}^i+F_i)+Q_{r_i})\leq f_b(\e_iQ_i+Q_{r_i}).
$$
Hence, for $i$ sufficiently large, with $g(r_ix)>0$,
\begin{align*}
I_{4,2}\leq &\;\int_{B_R^+\cap\{d(\e_iQ_i+Q_{r_i})\geq d_*\}}\xi[ f_b(\e_i(\tl{Q}^i+F_i)+Q_{r_i})-f_b(\e_iQ_i+Q_{r_i})]g(r_ix)\,dx\\
\leq &\;C\int_{B_R^+\cap\{d(\e_iQ_i+Q_{r_i})\geq d_*\}}\sup_{d(Q)\geq d_*}|Df_b(Q)|\cdot \e_i|\tl{Q}^i+F_i-Q_i|\,dx.
\end{align*}
Then $I_{4,2}\ri 0$ as $i\ri \infty$ by $L^2$-convergence of $\tilde{Q}_i$, $F_i$ and $Q_i$.

Therefore,
\beqo
\limsup_{i\ri \infty} \int_{B_r^+}\left[f_e(J(r_ix)\na Q_i)-f_e(J(r_ix)\na \tl{Q})\right]g(r_ix)dx\leq \mu(B_R^+\backslash B_r^+),
\eeqo
Sending $R\ri r_+$, we prove that
\beqo
\limsup_{i\ri \infty} \int_{B_r^+}\left[f_e(J(r_ix)\na Q_i)-f_e(J(r_ix)\na \tl{Q})\right]g(r_ix)dx\leq 0.
\eeqo
Then \eqref{ine} immediately follows from \eqref{eqn: assumption on J} and \eqref{eqn: assumption on g}.
\end{proof}

\begin{lemma}\label{minlemmma}
For $\forall\, r\in(0,1)$, $\tl{Q}$ is the minimizer of
\beqo
W[Q]=\int_{B_r^+} |\na Q|^2+A|\mathrm{div}\, Q|^2\,dx.
\eeqo
\end{lemma}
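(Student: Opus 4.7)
The plan is to mimic the variational comparison argument from the proof of Lemma \ref{H1strong}, but applied to an arbitrary competitor rather than to $\tilde{Q}$ itself. Fix $r\in(0,1)$ with $\mu(\pa B_r\cap B_1^+)=0$, where $\mu$ is the Radon measure introduced in Lemma \ref{H1strong}; this restriction is harmless since only countably many $r$ are excluded. Let $P\in H^1(B_r^+,\BQ)$ be any competitor with $P=\tilde Q$ on $\pa B_r^+$ in the trace sense, and extend $P$ to $B_1^+$ by setting $P=\tilde Q$ on $B_1^+\setminus B_r^+$, which yields an $H^1(B_1^+)$ function. Pick $R\in(r,1)$ close to $r$ with $\mu(\pa B_R\cap B_1^+)=0$, and a cutoff $\xi$ with $\xi\equiv 1$ on $B_r$, $\xi\equiv 0$ outside $B_R$.

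Next, I would truncate $P$ at magnitude $\e_i^{-1/2}$ to obtain $P^i$ (in direct analogy with $\tilde Q^i$), so that $P^i\to P$ strongly in $H^1(B_1^+)$ and $|\e_i P^i|\leq\sqrt{\e_i}$. Since $P=\tilde Q$ on $\G_R$ (the extension forces this on $\G_R\setminus\G_r$, and the boundary condition gives it on $\G_r$), Lemma \ref{lemma: construction of F_i} applies verbatim with $Q_i-\tilde Q$ replaced by $Q_i-P^i$: one obtains $F_i\in H^1(B_R^+)$, supported in $\overline{B_R^+}$, uniformly bounded, with $F_i=\xi(Q_i-P^i)$ on $\G_R$, $F_i\to 0$ in $H^1$, and with bulk-energy discrepancy vanishing in the limit. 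Then $G_i:=\xi P^i+(1-\xi)Q_i+F_i$ agrees with $Q_i$ on $\pa B_1^+$, and for $i$ large $\e_iG_i+Q_{r_i}\in\BQ_{phy}^{\mathrm o}$ thanks to $Q_{r_i}\to Q_0(0)\in\BQ_{phy}^{\mathrm o}$ (Lemma \ref{Lebesguepoint}) together with $|\e_iG_i|\le C\sqrt{\e_i}\to 0$. By the minimality of $Q_i$ for $W_i$, one gets $W_i[Q_i]\le W_i[G_i]$.

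Expanding this inequality with the help of the quadratic structure of $f_e$, the argument splits into three regions: on $B_r^+$ the competitor is $P^i+F_i$, on $B_1^+\setminus B_R^+$ it coincides with $Q_i$, and the transition region $B_R^+\setminus B_r^+$ contributes an error bounded by $C\mu(B_R^+\setminus B_r^+)$ via the same Cauchy--Schwarz splitting used for the term $I_3$ in the proof of Lemma \ref{H1strong}. Using the strong $H^1_{loc}$ convergence $\nabla Q_i\to\nabla\tilde Q$ from Lemma \ref{H1strong} on $B_r^+$, the $H^1$-convergence $P^i+F_i\to P$, and the assumptions \eqref{eqn: assumption on J}--\eqref{eqn: assumption on g}, passing to the limit yields
\[
\int_{B_r^+}\bigl[f_e(\nabla\tilde Q)-f_e(\nabla P)\bigr]\,dx\leq C\mu(B_R^+\setminus B_r^+).
\]
Sending $R\to r^+$ and invoking $\mu(\pa B_r\cap B_1^+)=0$ gives $\int_{B_r^+}f_e(\nabla\tilde Q)\leq\int_{B_r^+}f_e(\nabla P)$, i.e.\ the desired minimality of $\tilde Q$ for $W$.

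The main obstacle will be controlling the bulk-energy contribution $\e_i^{-2}r_i^2\int_{B_R^+}\bigl[f_b(\e_iG_i+Q_{r_i})-f_b(\e_iQ_i+Q_{r_i})\bigr]g(r_ix)\,dx$, which is precisely the term $\e_i^{-2}r_i^2I_4$ in Lemma \ref{H1strong}. I would handle it exactly as there: by the convexity of $f_b$, split by the cutoff $\xi$ into contributions around $(1-\xi)(Q_i+F_i)$ and around $\xi(P^i+F_i)$. On the set where $d(\e_iQ_i+Q_{r_i})$ is small, the truncation keeps $\e_i(P^i+F_i)+Q_{r_i}$ close to $Q_0(0)$, away from $\pa\BQ_{phy}$, so $f_b(\e_i(P^i+F_i)+Q_{r_i})\leq f_b(\e_iQ_i+Q_{r_i})$ there and the contribution is nonpositive; on the complement, $f_b$ and $Df_b$ are bounded, and the $L^2$-convergence of $Q_i$ together with $F_i\to 0$, $P^i\to P$ forces the whole term to vanish in the limit, after absorbing the harmless factor $r_i^2/\e_i^2\leq 1$.
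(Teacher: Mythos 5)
Your argument is essentially the paper's: the paper tests with $H_i=\xi(\tl{Q}^i+\phi)+(1-\xi)Q_i+F_i$ for $\phi\in C_c^\infty(B_r^+,\BQ)$ and then passes to $\phi\in H_0^1(B_r^+,\BQ)$ by density, whereas you parameterize the competitor directly as $P=\tl{Q}+\phi$ with $\phi\in H_0^1$ and truncate $P$ rather than $\tl{Q}$; everything else --- the reuse of Lemma \ref{lemma: construction of F_i}, the invocation of $Q_i$'s minimality, the handling of the transition region, the $I_4$-type treatment of the bulk term, and the passage to the limit via the strong $H^1_{\mathrm{loc}}$ convergence of Lemma \ref{H1strong} --- mirrors the paper step for step. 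One slip worth correcting: the claim $|\e_i G_i|\le C\sqrt{\e_i}$ is false, since $(1-\xi)\e_iQ_i=(1-\xi)(Q(r_ix)-Q_{r_i})$ is only $O(1)$, and the asserted membership in $\BQ_{phy}^{\mathrm o}$ is also too strong where $\xi<1$ and $Q(r_ix)\in\pa\BQ_{phy}$; admissibility $\e_iG_i+Q_{r_i}\in\BQ_{phy}$ should instead be verified, as the paper does, by writing $\e_iG_i+Q_{r_i}$ as the $\xi$-weighted convex combination of $\e_i(P^i+F_i)+Q_{r_i}$ and $Q(r_ix)+\e_iF_i$ and checking each endpoint --- a decomposition your own treatment of the bulk term already employs, so the fix is purely expository.
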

\begin{proof}
For any $\phi \in C_c^\infty(B_r^+, \BQ)$, we take
$$
H_i:= \xi (\tl{Q}^i+\phi) +(1-\xi)Q_i+F_i
$$
as a comparison in \eqref{eqn: rescaled local functional}.
Here $\tl{Q}^i$, $Q_i$ and $F_i$ are defined as before.
Arguing as in the analysis of $I_4$ in the proof of Lemma \ref{H1strong}, we find that as $i\ri\infty$,
\beqo
\f{r_i^2}{\e_i^2}\int_{B_R^+} [ f_b(\e_i H_i+Q_{r_i})-f_b(\e_iQ_i+Q_{r_i})]g(r_ix)\,dx\ri 0.
\eeqo
By the minimality of $Q_i$,
\beqo
\f{r_i^2}{\e_i^2}\int_{B_r^+}f_e\left(\f{\e_i}{r_i}J(r_ix)\na Q_i(x)\right)g(r_ix)\,dx \leq \f{r_i^2}{\e_i^2}\int_{B_r^+}f_e\left(\f{\e_i}{r_i}J(r_ix)\na H_i(x)\right)g(r_ix)\,dx+o(1).
\eeqo
Since $Q_i,\tl{Q}^i\ri \tl{Q}$ strongly in $H^1(B_r^+)$, letting $i\ri \infty$ yields that
\beq\label{min3}
\int_{B_r^+}f_e(\na \tl{Q})\,dx\leq \int_{B_r^+}f_e(\na(\tl{Q}+\phi))\,dx.
\eeq
Here we used the fact that $\phi$ is supported on $B_r^+$ while $\xi \equiv 1$ on $B_r^+$.
By approximation we observe that \eqref{min3} still holds for $\phi\in H_0^1(B_r^+,\BQ)$ and this completes the proof.
\end{proof}

\begin{proof}[Proof of Lemma \ref{smallene}]
Thanks to Lemma \ref{minlemmma}, $\tl{Q}$ satisfies the following Euler-Lagrange equation in $B_1^+$:
\beqo
-\Delta \tl{Q}-A\na \mathrm{div}\tl{Q}=0,
\eeqo
subject to smooth boundary data on $\Gamma_1$.
By the elliptic regularity theory, $\tilde{Q}$ is smooth in $\overline{B_{1/2}^+}$, and there exists $\theta_* \in (0,1/4)$, such that for $\forall\,x\in \overline{\Gamma_{1/4}}$,
\beq
A_{x,\theta_*}(\tl{Q})\leq \f13 A_{1}(\tl{Q}).
\eeq

To this end, we go back to the argument that follows the statement of Lemma \ref{smallene}.
We take $\theta$ there to be $\theta_*$.
Since $\na Q_i\ri \na \tl{Q}$ in $L^2_{loc}(B_1^+)$, we find that $A_{x_*,\theta}(\tl{Q})\geq \f{1}{2}A_{1}(\tl{Q})$, which is a contradiction.
\end{proof}

The proof of Proposition \ref{pabdy} is then straightforward.
\begin{proof}[Proof of Proposition \ref{pabdy}]
We take $x_0 = 0$ as before.
First let $\e$ be defined as in Lemma \ref{smallene}.
We may assume, by decreasing $R$ if needed, that
\beq\label{eqn: small deformation}
\|\nabla (\psi^{-1})-Id\|_{L^\infty(B_{R}^+)} \ll 1.
\eeq
Then take $R\leq \e$ such that $A_R \leq \e^2$.
Consider an arbitrary $x\in B_{R/4}^+\cup \G_{R/4}$.
We shall show that for some $\alpha\in (0,1)$ and $C>0$,
\beq\label{eqn: decay of normalized dirichlet energy}
\frac{1}{r}\int_{B_r(x)\cap B_{R}^+} |\nabla Q|^2 \,dy \leq C\e^2\left(\frac{r}{R}\right)^\alpha\quad \mbox{ for all }r\in (0,R/4).
\eeq

\setcounter{case}{0}
\begin{case}\label{case: x is on the boundary}
When $x\in \Gamma_{R/4}$, it is readily proved by Lemma \ref{smallene} that $A_{x,\theta R}<A_R/2\leq \e^2/2$.
Then we may repeatedly apply Lemma \ref{smallene} with fixed base $x$ to find that $A_{x,\theta^k R}\leq \e^2/2^k$ for all $k\in \mathbb{Z}_+$, which implies \eqref{eqn: decay of normalized dirichlet energy}.

\end{case}

Next we consider $x\in B_{R/4}^+$. 
Let $x'\in \G_{R/4}$ be the orthogonal projection of $x$ onto $\Gamma_{R/4}$.
Denote $d_x = d(x,\G_{R}) = |x-x'|$; here $d(\cdot, \cdot)$ denotes the usual Euclidean distance.

\begin{case}\label{case: radius is of the same order as distance to boundary}
If $r\geq d_x/2$, then $B_r(x)\cap B_R^+\subset B_{3r}(x')\cap B_R^+$.

If $3r\leq R/4$, by the discussion in Case \ref{case: x is on the boundary},
$$
\frac{1}{r}\int_{B_r(x)\cap B_R^+}|\nabla Q|^2\,dy\leq \frac{3}{3r}\int_{B_{3r}(x')\cap B_R^+}|\nabla Q|^2\,dy\leq C\e^2\left(\frac{r}{R}\right)^\alpha.
$$
Otherwise, $r\in ( R/12, R/4)$ and \eqref{eqn: decay of normalized dirichlet energy} is trivially true.
\end{case}

\begin{case}
Now let us assume $r< d_x/2$.
Consider $\psi(B_r(x))\subset\subset \psi(B_{d_x}(x))\subset\subset \Omega$ in the original coordinate before flattening the boundary.
By assumption \eqref{eqn: small deformation}, the deformation induced by $\psi$ is so small that we may assume that
$$
d(\psi(B_r(x)), \partial (\psi(B_{d_x}(x))) \geq C(d_x-r)\geq Cd_x.
$$
Hence, the interior $H^2$-regularity established in Proposition \ref{prop: higher interior regularity}, as well as its proof, implies that
\beqo
\begin{split}
\|\nabla Q \|_{L^2(B_r(x))}
\leq &\;C\|\nabla (Q\circ \psi^{-1}) \|_{L^2(\psi(B_r(x)))}\\
\leq &\;C|\psi(B_r(x)))|^{1/3}\|\nabla (Q\circ \psi^{-1}) \|_{L^6(\psi(B_r(x)))}\\
\leq &\;Cr\cdot d_x^{-1}\|\nabla (Q\circ \psi^{-1}) \|_{L^2(\psi(B_{d_x}(x)))}\\
\leq &\;Cr\cdot d_x^{-1}\|\nabla Q \|_{L^2(B_{d_x}(x))}.
\end{split}
\eeqo
Note that with abuse of notations here, $Q$ denotes the minimizer of \eqref{bdyfunc} with boundary flattened, while $Q\circ \psi^{-1}$ is the minimizer in the original coordinate.
Therefore,
$$
\frac{1}{r}\int_{B_r(x)}|\nabla Q|^2\,dy\leq \frac{Cr}{d_x}\cdot \frac{1}{d_x} \int_{B_{d_x}(x)}|\nabla Q|^2\,dy\leq \frac{Cr}{d_x}\cdot \e^2 \left(\frac{d_x}{R}\right)^\alpha\leq C\e^2 \left(\frac{r}{R}\right)^\alpha.
$$
In the second inequality, we used the estimate from Case \ref{case: radius is of the same order as distance to boundary}.
\end{case}

This completes the proof of \eqref{eqn: decay of normalized dirichlet energy}, which implies that $Q\in C^\alpha(B_{R/4}^+\cup \G_{R/4})$.

\end{proof}

\appendix
\section{Proof of \eqref{keyinequality}}\label{section: proof of key inequality}
In order to prove \eqref{keyinequality}, we first prove the following lemma.
\begin{lemma}
Let $M,\,N,\,P$ be $3\times 3$-symmetric traceless matrices.
Then
\beq\label{ineq1}
\|M\|_2^2\leq \f23|M|^2,
\eeq
and
\beq\label{ineq2}
\sum\limits_{i=1}^3(M_{i1}+N_{i2}+P_{i3})^2\leq \f53(|M|^2+|N|^2+|P|^2).
\eeq
\end{lemma}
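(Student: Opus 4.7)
My plan for \eqref{ineq1} is to use the spectral theorem. Since $M$ is real symmetric, its eigenvalues $\lambda_1,\lambda_2,\lambda_3$ are real with $\|M\|_2 = \max_i|\lambda_i|$, and tracelessness gives $\lambda_1+\lambda_2+\lambda_3=0$. Ordering so that $|\lambda_1|$ is largest, the relation $\lambda_1 = -(\lambda_2+\lambda_3)$ together with Cauchy--Schwarz yields $\lambda_1^2 \le 2(\lambda_2^2+\lambda_3^2)$. Adding $2\lambda_1^2$ to both sides and dividing by $3$ gives $\|M\|_2^2 = \lambda_1^2 \le \tfrac{2}{3}|M|^2$.

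For \eqref{ineq2}, I plan to reduce to scalar estimates applied index by index. Writing $a_i := M_{i1}$, $b_i := N_{i2}$, $c_i := P_{i3}$, the left-hand side splits as $\sum_{i=1}^3 (a_i+b_i+c_i)^2$. The key scalar lemma I will prove first is
\[
(\alpha+\beta+\gamma)^2 \le \tfrac{5}{2}\alpha^2 + \tfrac{10}{3}(\beta^2+\gamma^2), \qquad \forall\,\alpha,\beta,\gamma\in\mathbb{R},
\]
which follows from expanding the square and applying Young's inequality with weights $2\alpha\beta\le\tfrac{3}{4}\alpha^2+\tfrac{4}{3}\beta^2$, $2\alpha\gamma\le\tfrac{3}{4}\alpha^2+\tfrac{4}{3}\gamma^2$, $2\beta\gamma\le\beta^2+\gamma^2$.

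In parallel, I will extract a ``column bound'' from symmetry and tracelessness: $M_{22}+M_{33}=-M_{11}$ forces $M_{22}^2+M_{33}^2\ge\tfrac{1}{2}M_{11}^2$, so
\[
|M|^2 \;\ge\; \tfrac{3}{2}M_{11}^2 + 2(M_{12}^2+M_{13}^2) \;=\; \tfrac{3}{2}a_1^2 + 2(a_2^2+a_3^2),
\]
with analogous bounds for $N$ built around its second column and $P$ around its third.

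To finish, I will apply the scalar inequality three times, each time placing the privileged coefficient $\tfrac{5}{2}$ on whichever of $a_i,b_i,c_i$ happens to be a \emph{diagonal} entry of its own matrix: $\alpha=a_1$ at $i=1$, $\alpha=b_2$ at $i=2$, and $\alpha=c_3$ at $i=3$. Summing and then invoking the three column bounds converts the right-hand side into exactly $\tfrac{5}{3}(|M|^2+|N|^2+|P|^2)$. The only non-mechanical observation — and the reason the constant comes out to $\tfrac{5}{3}$ — is that the weights in the scalar inequality must rotate synchronously with $i$ so that each matrix's column bound is invoked precisely on the column where its diagonal entry contributes.
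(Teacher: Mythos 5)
Your proof is correct, and both parts are, at heart, the paper's own argument in slightly different clothing. For \eqref{ineq1} you diagonalize and use tracelessness exactly as the paper does. For \eqref{ineq2} the paper likewise splits index-by-index, assigns each matrix's full diagonal and the two column entries to the appropriate index $i$, discards the unused off-diagonal entries $M_{23}, N_{13}, P_{12}$, and then verifies non-negativity of each chunk by an explicit sum-of-squares decomposition; if you multiply your scalar lemma
\[
\tfrac52\alpha^2 + \tfrac{10}{3}\beta^2 + \tfrac{10}{3}\gamma^2 - (\alpha+\beta+\gamma)^2 \;\geq\; 0
\]
by $3$, you recover precisely the quadratic form $\tfrac92 M_{11}^2 + 7N_{12}^2 + 7P_{13}^2 - 6M_{11}N_{12} - 6M_{11}P_{13} - 6N_{12}P_{13}$ whose non-negativity the paper checks by writing it as $(\tfrac32 M_{11}-2N_{12})^2 + (\tfrac32 M_{11}-2P_{13})^2 + 3(N_{12}-P_{13})^2$. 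Your formulation via a standalone weighted Young's inequality plus a ``column bound'' is arguably cleaner bookkeeping and makes the origin of the constant $\tfrac53$ more transparent, but it is not a genuinely different route.
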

\begin{proof}
To prove \eqref{ineq1}, we can only consider the case when $M$ is diagonal, due to the fact that $M$ is symmetric and both Frobenius norm and matrix 2-norm are unitarily invariant. Without loss of generality we assume  $M=\mathrm{diag}\{\lam_1,\lam_2,-\lam_1-\lam_2\}$ with $\lam_1\lam_2\geq 0$, we deduce that
\beqo
2|M|^2-3\|M\|_2^2
=2(\lam_1^2+\lam_2^2+(\lam_1+\lam_2)^2)-3(\lam_1+\lam_2)^2=2(\lam_1-\lam_2)^2\geq 0.
\eeqo
Then \eqref{ineq1} follows.
The equality holds if and only if $M$ has two equal eigenvalues.

For \eqref{ineq2}, we compute
\beq
\begin{split} &\;5(|M|^2+|N|^2+|P|^2)-3\sum\limits_{i=1}^3(M_{i1}+N_{i2}+P_{i3})^2\\
\geq &\; \left(  5(M_{11}^2+M_{22}^2+M_{33}^2+2N_{12}^2+2P_{13}^2)-3(M_{11}+N_{12}+P_{13})^2  \right)\\
&\;+\left(  5(N_{11}^2+N_{22}^2+N_{33}^2+2M_{21}^2+2P_{23}^2)-3(M_{21}+N_{22}+P_{23})^2  \right)\\
&\;+\left(  5(P_{11}^2+P_{22}^2+P_{33}^2+2M_{31}^2+2N_{32}^2)-3(M_{31}+N_{32}+P_{33})^2  \right).
\end{split}
\label{ineq3}
\eeq
The equality holds if and only if $M_{23}=N_{13}=P_{12}=0$.
It suffices to prove non-negativity of each term on the right hand side of \eqref{ineq3}.
We only show this for the first term; the others can be handled similarly.
\begin{align*}
&\;5(M_{11}^2+M_{22}^2+M_{33}^2+2N_{12}^2+2P_{13}^2)-3(M_{11}+N_{12}+P_{13})^2\\
\geq &\; \left(5+\f52-3\right) M_{11}^2+7N_{12}^2+7P_{13}^2-6M_{11}N_{12}-6M_{11}P_{13}-6N_{12}P_{13}\\
=&\; \left(\f32M_{11}-2N_{12}\right)^2+\left(\f32 M_{11}-2P_{13}\right)^2+3(N_{12}-P_{13})^2\geq 0.
\end{align*}
Here we used $\sum_{i=1}^3 M_{ii}=0$ in the second line.
The equality holds if and only if
\beqo
N_{12}=P_{13}=\f34M_{11}, \quad M_{22}=M_{33}=-\f12 M_{11}.
\eeqo
This completes the proof of \eqref{ineq2}.
\end{proof}

To this end, \eqref{keyinequality} follows immediately from the lemma if we take $M=D_k^h Q$ in \eqref{ineq1}, and take $M=D_k^h \pa_1 Q,\, N=D_k^h \pa_2 Q,\, P=D_k^h \pa_3 Q$ in \eqref{ineq3}.

\section{Formula of $p(A)$}
\label{section: formula for p(A)}
\begin{lem}\label{lemma: formula for p(A)}
For $A>-\f35$, let
$$
p(A) := \sup_{\om\in [0,1],\,\om+\f{5}{3}A\geq 0}p(A,\om),
$$
where $p(A,\om)$ is defined in \eqref{eqn: constraint on p with omega}.
Then $p(A)$ is given by \eqref{eqn: def of p(A)}.
\begin{proof}
We rewrite \eqref{eqn: constraint on p with omega} as
\beq
p(A,\om) = 1+\f{\f95\left(\om+\f53 A\right)+\sqrt{\f{9}{5}\left(\om+\f53 A\right) \left[\left(
\f{9}{5}-2A^2\right)\left(\om+\f53 A\right)+2A^2\left(1+\f{5}{3}A\right)\right]}}{2A^2}.
\label{eqn: new form of p A omega}
\eeq
It is easy to see that if $\f{9}{5}-2A^2 \geq 0$, $p(A,\om)$ achieves its supremum at $\om = 1$, which gives
$$
p(A) = 1+\f{3}{A}+\f{9}{5A^2}.
$$
It suffices to consider $2A^2 > \f{9}{5}$, i.e., $A> \f{3\sqrt{10}}{10}$.
Define
\beqo
g(y) := \f{9}{5}y+\sqrt{\f{9}{5}y(B_1y+B_2)},
\eeqo
where
$$
B_1 = \f95-2A^2,\quad B_2 = 2A^2\left(1+\f53A\right).
$$
Then
$$
p(A,\om) = 1+(2A^2)^{-1}g\left(\om+\f53 A\right).
$$
Since
$$
g'(y) = \f95+\sqrt{\f95}\cdot \f{2B_1 y+ B_2}{2\sqrt{y(B_1y+B_2)}},
$$
we find that $g'(y)<0$ if and only if
$$
B_1 y +\f{B_2}{2}< -\sqrt{\f95 y(B_1y+B_2)},
$$
which is equivalent to
$$
\left(B_1 y +\f{B_2}{2}\right)^2>\f95 y(B_1y+B_2)\quad \mbox{ and }\quad B_1y +\f{B_2}{2}<0.
$$
Solving these inequalities under the assumption $A>\f{3\sqrt{10}}{10}$, we find that
$$
y>\f{A\left(1+\f53A\right)}{2A- \sqrt{\f{18}{5}}}=:y_{+}.
$$
This implies that within the domain of $g(y)$, i.e.,
$$
y = \om+\f{5}{3}A \in \left[\f53 A, 1+\f53A\right].
$$
$g(y)$ is decreasing if and only if $y\geq y_+$.
\begin{enumerate}
  \item
When
$$
1+\f53 A \leq   y_+\quad\Leftrightarrow \quad A\in \left(\f{3\sqrt{10}}{10},\sqrt{\f{18}{5}}\right],
$$
$p(A,\cdot)$ is increasing on $[0,1]$.
Hence,
$$
p(A) = p(A,1) = 1+\f{3}{A}+\f{9}{5A^2}.
$$
\item
When
$$
\f53 A <  y_+< 1+\f53 A\quad \Leftrightarrow\quad A\in \left[\sqrt{\f{18}{5}}, \f35+\sqrt{\f{18}{5}}\right],
$$
then supremum of $p(A,\cdot)$ is achieved at $\om_*$ such that
$\om_*+\f53 A = y_+$.
Combining this with \eqref{eqn: new form of p A omega} yields that
\beqo
p(A) = 1+\f{3+5A}{2\sqrt{10}A- 6}.
\eeqo

\item
When
$$
\f53 A \geq   y_+\quad\Leftrightarrow\quad A\geq \f35+\sqrt{\f{18}{5}},
$$
$p(A,\cdot)$ is decreasing on $[0,1]$.
Hence,
$$
p(A) = p(A,0) = 1+\f{3+\sqrt{9+6A}}{2A}.
$$
\end{enumerate}
This completes the derivation.
\end{proof}
\end{lem}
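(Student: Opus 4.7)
The plan is to reduce the two-variable supremum defining $p(A)$ to a one-variable maximization via the substitution $y := \omega + \tfrac{5}{3}A$. Setting $B_1 := \tfrac{9}{5} - 2A^2$ and $B_2 := 2A^2(1 + \tfrac{5}{3}A)$, a direct expansion shows that the radicand in \eqref{eqn: constraint on p with omega} equals $\tfrac{9}{5}y(B_1 y + B_2)$, so that
$$p(A,\omega) = 1 + \frac{g(y)}{2A^2}, \qquad g(y) := \frac{9}{5}y + \sqrt{\tfrac{9}{5}\,y(B_1 y + B_2)}.$$
As $\omega$ ranges over $\{\omega \in [0,1] : \omega + \tfrac{5}{3}A \geq 0\}$, the variable $y$ sweeps the interval $I_A := [\max(0, \tfrac{5}{3}A),\, 1 + \tfrac{5}{3}A]$, so the task reduces to locating $\max_{I_A} g$.

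If $B_1 \geq 0$, i.e., $A^2 \leq 9/10$, both summands of $g$ are non-decreasing, so the maximum is attained at $y = 1 + \tfrac{5}{3}A$, giving $p(A) = p(A,1) = 1 + \tfrac{3}{A} + \tfrac{9}{5A^2}$. For $A^2 > 9/10$ (equivalently $A > \tfrac{3\sqrt{10}}{10}$, since $A > -\tfrac{3}{5}$), I would compute $g'$, square the equation $g'(y)=0$ subject to the sign condition $2B_1 y + B_2 < 0$, and solve to obtain the unique positive critical point
$$y_+ \;=\; \frac{A(1 + \tfrac{5}{3}A)}{2A - \sqrt{18/5}},$$
with $g$ increasing on $[0, y_+]$ and decreasing on $[y_+, \infty)$. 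The three cases of \eqref{eqn: def of p(A)} then correspond to the position of $y_+$ relative to $I_A$: (i) if $y_+ \geq 1 + \tfrac{5}{3}A$, which reduces to $A \leq \sqrt{18/5}$, the maximum is still at $\omega=1$ and consolidates with the previous case into the first piece; (ii) if $\tfrac{5}{3}A < y_+ < 1 + \tfrac{5}{3}A$, i.e., $A \in (\sqrt{18/5}, \tfrac{3}{5} + \sqrt{18/5}]$, the maximum is attained at the interior critical point; (iii) if $y_+ \leq \tfrac{5}{3}A$, i.e., $A \geq \tfrac{3}{5} + \sqrt{18/5}$, $g$ is decreasing on $I_A$, so the maximum is at $\omega = 0$ and a direct computation from $p(A,0)$ gives the third piece.

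The main algebraic step is evaluating $g(y_+)$ in case (ii). The key observation is that the defining relation $g'(y_+)=0$ forces
$$\sqrt{\tfrac{9}{5}\,y_+ (B_1 y_+ + B_2)} \;=\; -\tfrac{1}{2}(2B_1 y_+ + B_2),$$
so
$$g(y_+) \;=\; \tfrac{9}{5}y_+ - \tfrac{1}{2}(2B_1 y_+ + B_2) \;=\; 2A^2 y_+ - A^2\bigl(1 + \tfrac{5}{3}A\bigr).$$
Dividing by $2A^2$ and simplifying $y_+ - \tfrac{1}{2}(1 + \tfrac{5}{3}A)$ using $\sqrt{18/5} = 3\sqrt{10}/5$ then collapses the expression to $(3 + 5A)/(2\sqrt{10}A - 6)$, matching the second piece of \eqref{eqn: def of p(A)}. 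I expect this simplification to be the one genuinely delicate point; the rest is bookkeeping about which endpoint of $I_A$ governs each regime.
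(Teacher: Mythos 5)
Your proposal is correct and takes essentially the same route as the paper: the substitution $y=\omega+\tfrac{5}{3}A$, the factorization of the radicand via $B_1,B_2$, the classification by the sign of $B_1$, and the position of the unique critical point $y_+$ relative to the interval $[\tfrac{5}{3}A,\,1+\tfrac{5}{3}A]$. The one spot where you go beyond the paper is making explicit the algebraic shortcut $g(y_+)=2A^2 y_+ - A^2(1+\tfrac{5}{3}A)$ coming from $g'(y_+)=0$, which the paper merely asserts; your simplification of that expression to $(3+5A)/(2\sqrt{10}A-6)$ checks out.
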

\section{Study of $d(Q)$}
\label{section: properties of d(Q)}
We study the properties of $d(Q)$ in this section.
It is known that every $Q\in \BQ_{phy}$ can be represented by
\begin{equation}\label{qtensor}
Q=\lambda_1 n\otimes n+\lambda_2 m\otimes m+\lam_3 p\otimes p,
\end{equation}
where
\begin{equation}\label{eqn: order of eigenvalues}
\lam_1+\lam_2+\lam_3=0,\, \lam_i\in \left[-\f13,\f23\right],\,\lam_1\leq \lam_2\leq\lam_3,
\end{equation}
and where $(n,m,p)$ forms an orthonormal frame in $\BR^3$.
Then we have the following characterization of $d(Q)$.
\begin{lemma}\label{lemma: characterization of d(Q)}
Let $Q\in \BQ_{phy}$ be given by \eqref{qtensor} and \eqref{eqn: order of eigenvalues}.
Then $d(Q)=|Q-Q'|$, where
\beq\label{dq}
Q'=-\f13 n\otimes n+\left(\lam_2+\f{\lam_1+\f13}{2}\right)m\otimes m+\left(\lam_3+\f{\lam_1+\f13}{2}\right)p\otimes p.
\eeq
As a result,
\begin{equation}\label{formula for d(Q)}
d(Q) = \frac{\sqrt{6}}{2}\left(\lambda_1+\frac{1}{3}\right).
\end{equation}

\begin{proof}
Since the distance between two matrices is invariant under orthogonal transforms, without loss of generality, we may assume $n = (1,0,0)$, $m = (0,1,0)$ and $p = (0,0,1)$.
Let $s=2\lam_1+\lam_2$ and $r=2\lam_2+\lam_1$.
Then \eqref{qtensor} becomes
\beq\label{qtensor2}
Q=s\left(n\otimes n-\f{1}{3}I\right)+r\left(m\otimes m-\f13 I\right), \quad \f{r-1}{2}\leq s\leq r\leq 0.
\eeq
Now we are going to look for $Q'\in \pq$ such that $|Q-Q'|$ is minimized.
Assume
\beqo
Q'=s'\left(n'\otimes n'-\f{1}{3}I\right)+r'\left(m'\otimes m'-\f13 I\right),
\eeqo
with
\beqo
n' = (a,b,c),\quad m'=(u,v,w),\quad \f{r'-1}{2}\leq s'\leq r'\leq 0,
\eeqo
and $(n',m')$ being an orthonormal pair.

First we are going to show that when $s',r'$ is fixed, $|Q-Q'|$ is minimized when $n'=n,\,m'=m$.
We calculate that
\begin{equation*}
\begin{split}
&\;|Q-Q'|^2\\
=&\;|Q|^2+|Q'|^2\\
&-2\left[s\left(n\otimes n-\f{1}{3}I\right)+r\left(m\otimes m-\f13 I\right)\right]:\left[s'\left(n'\otimes n'-\f{1}{3}I\right)+r'\left(m'\otimes m'-\f13 I\right)\right]\\
=&\;C(s,r,s',r')-2\left[ss'\left(( n,n')^2-\f13\right)+sr'\left(( n,m')^2-\f13\right)\right.\\
&\left.\qquad+rs'\left((m,n')^2-\f13\right)+rr'\left(( m,m')^2-\f13\right)\right]\\
= &\; C_(s,r,s',r')-2(ss'a^2+rs'b^2+sr'u^2+rr'v^2).
\end{split}
\end{equation*}
Here $C(s,r,s',r')$ represents some constant depending only on $s$, $r$, $s'$ and $r'$, whose definition changes from line to line.

Then it suffices to show that
\beq\label{Gineq}
ss'a^2+rs'b^2+sr'u^2+rr'v^2\leq ss'+rr'.
\eeq
Recall that
\begin{equation}
\label{abc} a^2+b^2+c^2=1,\quad u^2+v^2+w^2=1,\quad au+bv+cw=0.
\end{equation}
We claim that $u^2\leq b^2+c^2$. 
Indeed, by \eqref{abc},
\beqo
a^2u^2=|bv+cw|^2\leq (b^2+c^2)(v^2+w^2)=(1-a^2)(1-u^2),
\eeqo
which implies that $u^2\leq 1-a^2=b^2+c^2$.
Then we deduce that
\begin{equation*}
\begin{split}
&\;ss'+rr'-(ss'a^2+rs'b^2+sr'u^2+rr'v^2)\\
=& \;(s-r)s' (b^2+c^2)-(s-r)r'u^2+rr'w^2+rs'c^2\\
\geq &\;(s-r)(s'-r')u^2\geq 0.
\end{split}
\end{equation*}
Here we used \eqref{abc} and the facts that $s\leq r\leq 0$ and $s'\leq r' \leq 0$.

To this end, we have showed that if $Q$ is given by \eqref{qtensor} and if $Q'\in \pq$ minimizes $|Q-Q'|$, $Q'$ should be represented by
$$
Q'=\mu_1 n\otimes n+\mu_2 m\otimes m+\mu_3 p\otimes p,
$$
for some $-1/3= \mu_1\leq \mu_2\leq \mu_3\leq 2/3$ such that $\mu_1+\mu_2+\mu_3 =0$.
The constraints on $\mu_i$ are due to the characterization of $\pq$ in \eqref{boundary of Q}.
Moreover,
\beq\label{distance}
|Q-Q'|=\sqrt{(\lam_1-\mu_1)^2+(\lam_2-\mu_2)^2+(\lam_3-\mu_3)^2}.
\eeq
Therefore, $|Q-Q'|$ achieves its minimum if
$$
\mu_1=-\f13,\quad\mu_2=\lam_2+\f{\lam_1+\f13}{2},\quad \mu_3=\lam_3+\f{\lam_1+\f13}{2}.
$$
\eqref{formula for d(Q)} follows immediately.
This completes the proof.
\end{proof}
\end{lemma}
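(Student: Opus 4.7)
The plan is to exploit the orthogonal invariance of the Frobenius norm to reduce to a diagonal optimization. Since $|RQR^\intercal - RQ'R^\intercal| = |Q-Q'|$ for any $R \in O(3)$, and since both $\BQ_{phy}$ and its boundary $\pa\BQ_{phy}$ are invariant under conjugation (being defined purely through eigenvalue conditions), I may rotate so that the eigenframe $(n,m,p)$ of $Q$ coincides with the standard basis $(e_1,e_2,e_3)$. Thus it suffices to find $Q' \in \pa\BQ_{phy}$ minimizing $|Q-Q'|$ and extract the explicit formula.

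The heart of the argument, and the step I expect to be the main obstacle, is showing that any such minimizer $Q'$ can be chosen to be simultaneously diagonal with $Q$ in the same frame. Following the parametrization in \eqref{qtensor2}, I would write $Q' = s'(n'\otimes n' - \tfrac13 I) + r'(m'\otimes m' - \tfrac13 I)$ with $s' \leq r' \leq 0$ and $(n',m')$ an orthonormal pair, then expand $|Q-Q'|^2 = |Q|^2 + |Q'|^2 - 2 Q:Q'$. The cross-term depends only on the four squared inner products $(n,n')^2, (n,m')^2, (m,n')^2, (m,m')^2$, so for fixed $(s',r')$ the problem reduces to proving the alignment inequality
\beqo
ss' a^2 + rs' b^2 + sr' u^2 + rr' v^2 \leq ss' + rr',
\eeqo
where $n' = (a,b,c)$ and $m' = (u,v,w)$. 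The key algebraic identity needed is $u^2 \leq b^2 + c^2 = 1 - a^2$, which follows from applying Cauchy--Schwarz to the orthogonality relation $au+bv+cw = 0$ together with the unit-norm conditions. Combined with the sign information $s,r,s',r' \leq 0$ and the orderings $s \leq r$, $s' \leq r'$, this inequality forces the minimum to be attained at $n'=n$, $m'=m$.

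Once the alignment is established, the problem reduces to minimizing $\sum_{i=1}^3 (\lambda_i - \mu_i)^2$ over $\mu_1 \leq \mu_2 \leq \mu_3$ subject to $\mu_1+\mu_2+\mu_3 = 0$, all $\mu_i \in [-\tfrac13,\tfrac23]$, and (by definition of $\pa\BQ_{phy}$) at least one $\mu_i = -\tfrac13$. The ordering of the $\lambda_i$ together with the ordering of the $\mu_i$ forces the optimal choice $\mu_1 = -\tfrac13$. A one-line Lagrange multiplier argument applied to the remaining constraint $\mu_2 + \mu_3 = \tfrac13$ then gives $\mu_2 - \lambda_2 = \mu_3 - \lambda_3 = \tfrac12(\lambda_1 + \tfrac13)$, which is precisely \eqref{dq}; the feasibility check $\mu_3 \leq \tfrac23$ follows from $\lambda_3 - \lambda_2 \leq \tfrac23 - (-\tfrac13) = 1$. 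A direct computation then yields
\beqo
d(Q)^2 = \left(\lambda_1+\tfrac13\right)^2 + 2\left(\tfrac{\lambda_1 + 1/3}{2}\right)^2 = \tfrac32\left(\lambda_1+\tfrac13\right)^2,
\eeqo
so $d(Q) = \tfrac{\sqrt6}{2}(\lambda_1 + \tfrac13)$, establishing \eqref{formula for d(Q)}.
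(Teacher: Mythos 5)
Your proposal follows the paper's argument essentially line by line: the same reduction to a diagonal frame by orthogonal invariance, the same expansion of $|Q-Q'|^2$ and reduction to the alignment inequality $ss'a^2+rs'b^2+sr'u^2+rr'v^2\le ss'+rr'$, the same Cauchy--Schwarz step proving $u^2\le b^2+c^2$ from $au+bv+cw=0$, and the same subsequent minimization over the $\mu_i$. The only cosmetic difference is that you invoke a Lagrange-multiplier framing (and spell out the feasibility check $\mu_3\le 2/3$) for the final diagonal optimization, where the paper just reads off the minimizer; the content is identical.
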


An immediate consequence of Lemma \ref{lemma: characterization of d(Q)} is
\begin{lemma}\label{lemma: d(Q) is Lipschitz}
$d(Q)$ is Lipschitz continuous in $\mathcal{Q}_{phy}$.
\begin{proof}
The difference between the smallest eigenvalues of two matrice in $\mathcal{Q}_{phy}$ can be bounded by their distance.
Combining this fact with Lemma \ref{lemma: characterization of d(Q)}, we complete the proof of the Lemma.
\end{proof}
\end{lemma}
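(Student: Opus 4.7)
The plan is to combine the closed form $d(Q)=\tfrac{\sqrt{6}}{2}\bigl(\lambda_1(Q)+\tfrac13\bigr)$ from Lemma~\ref{lemma: characterization of d(Q)} with the standard Lipschitz dependence of eigenvalues on a symmetric matrix in the Frobenius norm. Concretely, for any $Q_1,Q_2\in\mathcal{Q}_{phy}$, I will first invoke the characterization to write
\[
d(Q_1)-d(Q_2)=\frac{\sqrt 6}{2}\bigl(\lambda_1(Q_1)-\lambda_1(Q_2)\bigr),
\]
and then bound the right-hand side by $\tfrac{\sqrt 6}{2}|Q_1-Q_2|$.

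The only nontrivial ingredient is the eigenvalue bound $|\lambda_1(Q_1)-\lambda_1(Q_2)|\le |Q_1-Q_2|$. I would state this as a consequence of Weyl's inequality for Hermitian matrices, which gives $|\lambda_i(Q_1)-\lambda_i(Q_2)|\le \|Q_1-Q_2\|_2$ for each $i$, where $\|\cdot\|_2$ is the operator (matrix $2$-) norm. Combined with the elementary comparison $\|M\|_2\le |M|$ between the $2$-norm and the Frobenius norm on $3\times 3$ matrices, this yields $|\lambda_1(Q_1)-\lambda_1(Q_2)|\le |Q_1-Q_2|$. Alternatively, one can derive the same inequality directly from the variational formula $\lambda_1(Q)=\min_{|v|=1}v^\intercal Qv$: for any unit vector $v_*$ achieving the minimum for $Q_2$, $\lambda_1(Q_1)\le v_*^\intercal Q_1 v_* = v_*^\intercal Q_2 v_* + v_*^\intercal(Q_1-Q_2)v_*\le \lambda_1(Q_2)+\|Q_1-Q_2\|_2$, and the reverse inequality follows symmetrically.

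Putting these together, $|d(Q_1)-d(Q_2)|\le \tfrac{\sqrt 6}{2}|Q_1-Q_2|$ for all $Q_1,Q_2\in\mathcal{Q}_{phy}$, which is the claimed Lipschitz continuity (indeed on all of $\mathcal{Q}$, though only the restriction to $\mathcal{Q}_{phy}$ is needed).

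There is essentially no obstacle here: the main point is just to flag that Lemma~\ref{lemma: characterization of d(Q)} reduces the statement to the Lipschitz continuity of the smallest eigenvalue, which is a classical min-max consequence that transfers from the operator norm to the Frobenius norm without loss in finite dimensions.
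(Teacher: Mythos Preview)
Your proposal is correct and follows essentially the same approach as the paper: invoke the formula $d(Q)=\tfrac{\sqrt{6}}{2}(\lambda_1(Q)+\tfrac13)$ from Lemma~\ref{lemma: characterization of d(Q)} and then use the Lipschitz dependence of the smallest eigenvalue on the matrix (via Weyl's inequality or the variational characterization). You have in fact supplied more detail than the paper's one-line justification.
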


\section{A Construction of $\{f_b^\e\}$}
\label{section: construction of f_b eps}
In this section, we provide a construction of $\{f_b^\e\}_{0<\e\ll 1}$ used in Section \ref{section: proof of the improved bound}.
For convenience, we recall the conditions on $\{f_b^\e\}_{0<\e\ll 1}$:
\begin{enumerate}[($i'$)]
\item \label{assumption: non negative f b eps} For all $0<\e\ll 1$, $f_b^\e(Q)\in [0, \infty)$ for all $Q\in \mathcal{Q}$;
\item \label{assumption: smooth and convex} $f_b^\e$ are convex and smooth in $\mathcal{Q}$; 
\item \label{assumption: boundedness} $f_b^\e(Q)\leq f_b(Q)$ for all $Q\in \mathcal{Q}$.
\item Moreover,
$$
\lim_{\e\ri 0^+} f_b^\e(Q) = f_b(Q),\quad \lim_{\e\ri 0^+} Df_b^\e(Q) = Df_b(Q)
$$
locally uniformly in $\BQ^{\mathrm{o}}_{phy}$.
\end{enumerate}
Here $Df_b^\e(Q)$ denotes the gradient of $f_b^\e$ with respect to $Q$.

\begin{proof}[Proof of Lemma \ref{lemma: construction of f_b eps}]
Define
\beqo
\BQ_{phy}^\e = \{Q\in \BQ^{\mathrm{o}}_{phy}:\, f_b(Q)< \e^{-1}\}.
\eeqo
Take $\e\ll 1$, such that $\BQ_{phy}^\e$ is a non-empty open subset of $\BQ_{phy}$.
Then we define on the entire $\BQ$ that
\beqo
F_b^\e(Q) = \sup_{Q'\in \BQ_{phy}^\e} f_b(Q')+Df_b(Q')(Q-Q').
\eeqo
It is not difficult to show that $\{F_b^\e\}_{0<\e\ll 1}$ satisfies all the conditions above except for the smoothness issue.
Indeed, $F_b^\e \equiv f_b$ on $\BQ_{phy}^\e$, while outside $\BQ_{phy}^\e$, $F_b^\e$ is only Lipschitz continuous and $DF_b^\e$ exists in the $L^\infty$-sense but may not be well-defined pointwise.
In particular, for all $Q_1,Q_2\in \BQ$,
\beq
|F_b^\e(Q_1)-F_b^\e(Q_2)|\leq |Q_1-Q_2|\sup_{\BQ_{phy}^\e}|Df_b| =: |Q_1-Q_2|\om_\e .
\label{eqn: Lipschitz norm of F_b eps}
\eeq
Note that $\om_\e \to +\infty$ as $\e\to 0^+.$

We shall make a little modification of $\{F_b^\e\}$ to construct smooth $\{f_b^\e\}$.
Let $\phi$ be a non-negative $C_0^\infty$-mollifier in $\mathcal{Q}$ supported on the unit ball, such that $\int_\mathcal{Q}\phi(Q)\,dQ = 1$.

Then we define
\beqo
f_b^\e(Q) = \int_{\BQ} \phi(Q')F_b^\e(Q-\e \om_\e^{-1} Q')\,dQ'-\e.
\eeqo

We derive that for arbitrary $Q\in \BQ$,
\beq
\label{eqn: error between f_b eps and F_b eps}
\begin{split}
|f_b^\e(Q)+\e-F_b^\e (Q)| \leq &\; \int_{\BQ}\phi(Q')|F_b^\e(Q-\e \om_\e^{-1} Q')-F_b^\e(Q)|\,dQ'\\
\leq &\; \int_{\BQ}\phi(Q')\cdot \e \om_\e^{-1}\cdot \om_\e\,dQ'= \e.
\end{split}
\eeq
In the first inequality, we used the fact that $\phi$ is non-negative and normalized; in the second inequality, we applied \eqref{eqn: Lipschitz norm of F_b eps} as well as that $\phi$ is supported on the unit ball in $\BQ$.
\eqref{eqn: error between f_b eps and F_b eps} implies that $F_b^\e(Q)-2\e\leq f_b^\e(Q)\leq F_b^\e(Q)\leq f_b(Q)$.

It is then easy to verify that $\{f_b^\e\}_{0<\e\ll 1}$ satisfies all the conditions we need.
\end{proof}

\par
~\\
\textbf{Acknowledgement.} This work was supported by NSF grant DMS-1501000. We want to thank Professor Fanghua Lin for introducing us to this obstacle problem, and for his valuable comments and suggestions in the preparation of this work.

\bibliographystyle{plain}
\bibliography{Regularity-singular}

\end{document}